\theoremstyle{definition}
\newtheorem{defi}{Definition}[section]
\newtheorem{lemmadefi}[defi]{Lemma-Definition}
\newtheorem{notation}[defi]{Notation}
\newtheorem{cond}[defi]{Condition}
\newtheorem*{ack}{Acknowledgements}
\theoremstyle{remark}
\newtheorem*{rem}{Remark}
\theoremstyle{plain}
\newtheorem{lemma}[defi]{Lemma}
\newtheorem{prop}[defi]{Proposition}
\newtheorem{cor}[defi]{Corollary}
\newtheorem{thm}[defi]{Theorem}
\numberwithin{equation}{section} 
\newcommand{\defeq}{\vcentcolon=}
\newcommand{\eqdef}{=\vcentcolon}
\newcommand{\defiff}{\vcentcolon\Longleftrightarrow}
\renewcommand{\setminus}{\smallsetminus}
\renewcommand{\emptyset}{\varnothing}
\newcommand{\D}{\mathcal{D}}
\newcommand{\cO}{\mathcal{O}}
\newcommand{\cM}{\mathcal{M}}
\newcommand{\cN}{\mathcal{N}}
\newcommand{\cE}{\mathcal{E}}
\newcommand{\cR}{\mathcal{R}}
\newcommand{\cF}{\mathcal{F}}
\newcommand{\cG}{\mathcal{G}}
\newcommand{\cA}{\mathcal{A}}
\newcommand{\cS}{\mathcal{S}}
\newcommand{\cH}{\mathcal{H}}
\newcommand{\C}{\mathbb{C}}
\newcommand{\PP}{\mathbb{P}}
\newcommand{\R}{\mathbb{R}}
\newcommand{\Z}{\mathbb{Z}}
\newcommand{\bM}{\mathbb{M}}
\newcommand{\RR}{\mathsf{R}}
\newcommand{\DD}{\mathsf{D}}
\newcommand{\kk}{\mathbf{k}}
\newcommand{\sM}{\mathsf{M}}
\newcommand{\eps}{\varepsilon}
\newcommand{\Db}[1]{\mathrm{D}^\mathrm{b}(#1)}
\newcommand{\Modk}[1]{\mathrm{Mod}(\kk_{#1})}
\newcommand{\Dbk}[1]{\mathrm{D}^\mathrm{b}(\mathbf{k}_{#1})}
\newcommand{\EbIk}[1]{\mathrm{E}^\mathrm{b}(\mathrm{I}\mathbf{k}_{#1})}
\newcommand{\ModD}[1]{\mathrm{Mod}(\D_{#1})}
\newcommand{\DbD}[1]{\mathrm{D}^\mathrm{b}(\D_{#1})}
\newcommand{\ModholD}[1]{\mathrm{Mod}_\mathrm{hol}(\D_{#1})}
\newcommand{\DbholD}[1]{\mathrm{D}^\mathrm{b}_\mathrm{hol}(\D_{#1})}
\newcommand{\op}{\mathrm{op}}
\newcommand{\ModGaussCr}{\mathrm{Mod}_{\textrm{\normalfont Gauß}}^{C,\ranks}(\D_{\PP})}
\newcommand{\EbGaussCr}{\mathrm{E}^{C,\ranks}_{\textrm{\normalfont Gauß}}(\mathrm{I}\mathbf{k}_{\PP})}
\newcommand{\StokesCr}{\mathfrak{SD}^{C,\theta_0,\ranks}}
\newcommand{\SolE}[1]{\mathcal{S}ol_{#1}^\mathrm{E}}
\newcommand{\DRE}[1]{\mathcal{DR}_{#1}^\mathrm{E}}
\newcommand{\ptild}{\widetilde{p}}
\newcommand{\qtild}{\widetilde{q}}
\newcommand{\wf}{\check{w}}
\newcommand{\tf}{\check{t}}
\DeclareFontFamily{U}{bbold}{}
\DeclareFontShape{U}{bbold}{m}{n}
{
	<-5.5> s*[1.069] bbold5
	<5.5-6.5> s*[1.069] bbold6
	<6.5-7.5> s*[1.069] bbold7
	<7.5-8.5> s*[1.069] bbold8
	<8.5-9.5> s*[1.069] bbold9
	<9.5-11> s*[1.069] bbold10
	<11-15> s*[1.069] bbold12
	<15-> s*[1.069] bbold17
}{}
\DeclareRobustCommand{\1}{\text{\usefont{U}{bbold}{m}{n}1}}
\newcommand{\ranks}{\mathchoice{\scalebox{0.84}{\textsc{i}}\hspace{-0.3ex}\mathrm{r}}{\scalebox{0.84}{\textsc{i}}\hspace{-0.3ex}\mathrm{r}}{\scalebox{0.59}{\textsc{i}}\hspace{-0.25ex}\mathrm{r}}{\scalebox{0.47}{\textsc{i}}\hspace{-0.2ex}\mathrm{r}}}
\newcommand{\rec}{c_1} 
\newcommand{\imc}{c_2} 
\newcommand{\red}{d_1}
\newcommand{\imd}{d_2}
\newcommand{\rew}{w_1}
\newcommand{\imw}{w_2}
\newcommand{\rewf}{\wf_1}
\newcommand{\imwf}{\wf_2}
\newcommand{\pc}[1]{c_{(#1)}}
\NewDocumentCommand{\phirp}{g}{\varphi_{\mathrm{r}\IfNoValueTF{#1}{}{,#1}}^+}
\NewDocumentCommand{\phirm}{g}{\varphi_{\mathrm{r}\IfNoValueTF{#1}{}{,#1}}^-}
\NewDocumentCommand{\philp}{g}{\varphi_{\mathrm{l}\IfNoValueTF{#1}{}{,#1}}^+}
\NewDocumentCommand{\philm}{g}{\varphi_{\mathrm{l}\IfNoValueTF{#1}{}{,#1}}^-}
\NewDocumentCommand{\dif}{g}{\eta\IfNoValueTF{#1}{}{_{#1}}}
\newcommand{\psirp}{\psi_{\mathrm{r}}^+}
\newcommand{\psirm}{\psi_{\mathrm{r}}^-}
\newcommand{\psilp}{\psi_{\mathrm{l}}^+}
\newcommand{\psilm}{\psi_{\mathrm{l}}^-}
\newcommand{\real}[1]{\operatorname{Re}#1}
\newcommand{\Ob}{\mathrm{Ob}\,}
\newcommand{\Hc}[1]{\mathrm{H}_\mathrm{c}^{#1}}
\newcommand{\To}{\longrightarrow}
\newcommand{\TO}[1]{\xlongrightarrow{#1}}
\newcommand{\ToPO}{\xlongrightarrow{+1}}
\newcommand{\iso}{\simeq}
\newcommand{\Hom}[1]{\mathrm{Hom}_{#1}}
\newcommand{\Aut}[1]{\mathrm{Aut}_{#1}}
\newcommand{\tensD}{\mathbin{\otimes^{\mathsf{D}}}}
\newcommand{\conv}{\mathbin{\overset{+}{\otimes}}}
\newcommand{\convs}{\overset{*}{\otimes}}
\newcommand{\tensL}[1]{\mathbin{\otimes^{\mathsf{L}}_{#1}}}
\newcommand{\pitens}[1]{\pi^{-1}\kk_{#1}\otimes}
\newcommand{\indlim}[1]{\mathop{``\varinjlim"}\limits_{#1}}
\newcommand{\filim}[1]{\mathop{\varinjlim}\limits_{#1}}
\newcommand{\Sing}[1]{\mathrm{SingSupp}(#1)}
\newcommand{\kE}[1]{\kk_{#1}^\mathrm{E}}
\newcommand{\OmegaE}[1]{\Omega_{#1}^\mathrm{E}}
\newcommand{\ExpD}[3]{\mathcal{E}^{#1}_{#2|#3}}
\NewDocumentCommand{\Exps}{mmg}{\mathsf{E}^{#1}_{#2\IfValueT{#3}{|#3}}}
\NewDocumentCommand{\ExpS}{mmmg}{\mathsf{E}^{#1\rhd #2}_{#3\IfValueT{#4}{|#4}}}
\newcommand{\Expi}[3]{\mathbb{E}^{#1}_{#2|#3}}
\newcommand{\ExpI}[4]{\mathbb{E}^{#1\rhd #2}_{#3|#4}}
\newcommand{\expg}{\bigoplus_{c\in C}\big(\Exps{-\real{\frac{c}{2}z^2}}{\C}{\C}\big)^{r_c}} 
\newcommand{\expgzon}[1]{\bigoplus\limits_{c\in C}\Exps{-\real{\frac{c}{2}z^2}}{#1}} 
\newcommand{\Expg}{\bigoplus_{c\in C}\big(\Expi{-\real{\frac{c}{2}z^2}}{\C}{\PP}\big)^{r_c}} 
\newcommand{\expgwon}[1]{\bigoplus_{c\in C}\Exps{\real{\frac{1}{2c}w^2}}{#1}} 
\newcommand{\fstalk}[2]{{#1}\widehat{|}_{#2}}
\newcommand{\dF}[1]{{}^\mathsf{L}#1}
\newcommand{\sF}[1]{{}^\mathcal{L}#1}  
\newcommand{\vsum}[2]{\begin{matrix} #1 \\ \oplus \\ #2 \end{matrix}}
\newcommand{\katare}{\text{\usefont{U}{min}{m}{n}\symbol{'354}}}
\DeclareFontFamily{U}{min}{}
\DeclareFontShape{U}{min}{m}{n}{<-> udmj30}{}
\title{D-modules of pure Gaussian type and enhanced ind-sheaves}
\author[A.~Hohl]{Andreas Hohl}
\address[A.~Hohl]{Fakultät f\"ur Mathematik, Technische Universit\"at Chemnitz, 09107 Chemnitz, Germany}
\email{andreas.hohl@math.tu-chemnitz.de}
\thanks{The author was partially supported by a doctoral scholarship of \emph{Studienstiftung des deutschen Volkes}.}
\thanks{This is a post-peer-review, pre-copyedit version of an article published in \textsl{manuscripta mathematica}. The final authenticated version is available online at: \href{https://doi.org/10.1007/s00229-021-01281-y}{https://doi.org/10.1007/s00229-021-01281-y}}
\keywords{Fourier transform, holonomic D-modules, Riemann-Hilbert correspondence, enhanced ind-sheaves, irregular singularity, Stokes phenomenon, pure Gaussian type}
\subjclass[2020]{34M40, 44A10, 32C38}
\begin{document}

\begin{abstract}
Differential systems of pure Gaussian type are examples of D-modules on the complex projective line with an irregular singularity at infinity, and as such are subject to the Stokes phenomenon. We employ the theory of enhanced ind-sheaves and the Riemann--Hilbert correspondence for holonomic D-modules of A.\ D'Agnolo and M.\ Kashiwara to describe the Stokes phenomenon topologically. Using this description, we perform a topological computation of the Fourier--Laplace transform of a D-module of pure Gaussian type in this framework, recovering and generalizing a result of C.\ Sabbah.
\end{abstract}

\maketitle

\tableofcontents

\section{Introduction}
The study of D-modules with irregular singularities has recently experienced new impulses by a remarkable result of A.\ D'Agnolo and M.\ Kashiwara, the Riemann--Hilbert correspondence for holonomic D-modules (see \cite{DK16}). It states that on a complex manifold $X$ there is a fully faithful functor
$$\SolE{X}\colon \DbholD{X}^\op\hookrightarrow \mathrm{E}^\mathrm{b}(\mathrm{I}\C_{X}),$$
associating to any holonomic D-module an object in the category of \emph{enhanced ind-sheaves} from which one can reconstruct the D-module. The construction of the target category is technical, but it is related to sheaf theory of vector spaces and hence of a topological nature. The theory has since been applied to the study of Stokes phenomena and Fourier--Laplace transforms (see e.g.\ \cite{KS16b}, \cite{DK18}, \cite{DHMS}, \cite{IT18}). Other recent approaches to the study of Fourier transforms of Stokes data have been developed in \cite{Moc10} and \cite{Moc18}.

In their original article \cite[§9.8]{DK16}, the authors give an outlook on a topological study of the Stokes phenomenon of a D-module. In this paper, we develop rigorously these ideas in the case of D-modules of pure Gaussian type $\cM$, meromorphic connections on $\PP=\PP^1(\C)$ with a unique (and irregular) singularity at $\infty$ and exponential factors $-\frac{c}{2z'}$ in the corresponding Levelt--Turrittin decomposition (for $z'$ a local coordinate at $\infty$). In this precise form they were studied by C.\ Sabbah in \cite{Sab16} using Deligne's approach of Stokes-filtered local systems (see \cite{Del}, \cite{Mal91} and \cite{Sab13}) in order to find a transformation rule for the Stokes data attached to such a module. Similar (and more general) systems of differential equations with exponents of pole order $2$ have already been introduced by P.\ Boalch in \cite{Bo12} and \cite{Bo15} (where they are called ``type 3'' connections) with a different motivation. In the latter article, the author shows that a large class of certain quiver varieties arises as moduli spaces (wild character varieties) of such systems and uses this result to construct symplectic isomorphisms between these moduli. The study of Fourier--Laplace transforms is especially interesting in the Gaussian case since this class is invariant: The Fourier--Laplace transform of this kind of system has again a formal type with exponential factors of pole order $2$. Moreover, studying these connections is a natural step further, given that the theory of enhanced ind-sheaves has already proved to be useful in the case of exponents of pole order $1$ (cf.\ e.g.\ \cite{DHMS}), which play a prominent role in mirror symmetry.

It is the main purpose of the present article to reconstruct the results of \cite{Sab16} about the Fourier--Laplace transform of Stokes data with the new methods and to show how these computations can without much effort be adapted to more general cases. This research is based upon the dissertation \cite{Hohl}.

Let us briefly outline the main ideas and the structure of the article:

In the second section, we recall the basic notation and results from the theories of D-modules and enhanced ind-sheaves.

The third section then collects well-known results about Stokes phenomena: Classically, the Stokes phenomenon manifests itself in the fact that a formal solution of a differential equation has different convergent asymptotic lifts in different sectors around an irregular singularity. In the language of D-modules, this is expressed by the statement that the formal Levelt--Turrittin decomposition can locally (on sufficiently small sectors) be lifted to an analytic decomposition. By the Riemann--Hilbert correspondence, this induces a decomposition of the associated topological object $\SolE{X}(\cM)$ (\cref{decompositionSolE}).

In Sections~\ref{chapGaussianType}--\ref{sectionClosedSectorsOfInfiniteRadius}, we introduce the notion of D-modules of pure Gaussian type in the language of D-modules and describe step-by-step the topological object of enhanced solutions $\SolE{\PP}(\cM)$ of such a D-module $\cM$: Starting from the Stokes phenomenon, which yields a direct sum decomposition on small sectors, we discuss how large the radius and angular width of these sectors may be, introducing notions like Stokes multipliers in this framework.
It will finally turn out (\cref{thmBigSectors}) that $\SolE{\PP}(\cM)$ is described by an ordinary sheaf on $\C\times\R$, which in turn is determined by a small set of linear algebra data, the \emph{Stokes data}. In the spirit of \cite{Sab16}, we present Stokes data and a Riemann--Hilbert correspondence for D-modules of pure Gaussian type in Section~\ref{sectionStokesData}.

We will then use this description to compute the Fourier--Laplace transform of a D-module of pure Gaussian type and describe its Stokes data in terms of the Stokes data of the original system. This computation, too, will involve (constructible) sheaves rather than enhanced ind-sheaves in the end and will therefore reduce to calculations in algebraic topology (cohomology groups with compact support). For this purpose, we recall the notions of Fourier--Laplace transform for D-modules and enhanced ind-sheaves in Section~\ref{sectionFourier} before we carry out our computations in the two final sections.

Compared to the approach via Stokes-filtered local systems, our considerations have various advantages: Although the theory is a priori more involved, it turns out that the actual computations to be made are computations in the theory of sheaves of vector spaces and algebraic topology. In particular, one does not need to deal with filtrations, which are often more intricate to handle.
A particularly nice feature of this new approach in dealing with integral transforms is the fact that the functor $\SolE{X}$, which we use for translating between D-modules and topology, is compatible with proper direct images. In the context of Stokes filtrations, the Riemann--Hilbert functor does not have this property. Instead, it was necessary to deal with sequences of blow-ups to compute direct images (cf.\ \cite{HS} and\cite{Sab16}), using a result of Mochizuki (\cite{Moc14}).
Finally, our method of computation needs less input in the following sense: By results like the stationary phase formula (see \cite{Sab08} and \cite{DK18}), we could know a priori that the Fourier--Laplace transform of a D-module of pure Gaussian type is again of pure Gaussian type, and we can explicitly write down the exponential factors of the Fourier--Laplace transform. However, this a-priori-knowledge does not enter our arguments, but is rather obtained as a by-product of our computations automatically.

Our main results are the following: We first recover a theorem of C.\ Sabbah (\cite[Theorem 4.2]{Sab16}), who proved an explicit transformation rule for Stokes data in the case where all the parameters $c$ appearing in the exponential factors share the same argument. In \cref{thmAlignedParametersEnhancedSheaves}, we prove such a transformation rule for enhanced sheaves of pure Gaussian type, which as a corollary (\cref{corAlignedParametersDmodules}) yields the result from loc.\ cit. We then show how such a result can be generalized to situations with weaker assumptions on the parameters. Therefore, we treat a more general case (\cref{thmNotAligned}), illustrating how the methods of the above theorem are naturally adapted to other situations.

\section{Enhanced ind-sheaves and D-modules}
Let $X$ be a complex manifold. We denote the field of complex numbers by $\kk=\C$. We mainly use the notation of \cite{KS90} and \cite{DK16}.

Denote by $\D_X$ the sheaf of rings of differential operators on $X$, by $\ModD{X}$ the category of (left) $\D_X$-modules and by $\DbD{X}$ its bounded derived category. Let $\ModholD{X}$ (resp.\ $\DbholD{X}$) be the full subcategory of objects which are holonomic (resp.\ have holonomic cohomologies). (We refer to \cite{Kas03}, \cite{HTT} and \cite{Bjo} for details on D-modules.)

In \cite{DK16}, the authors introduced the triangulated category $\EbIk{X}$ of enhanced ind-sheaves on $X$ as a quotient of the derived category of ind-sheaves on $X\times(\R\sqcup\{\pm\infty\})$. Together with the convolution product $\conv$ it is a tensor category, and an important object is $$\kE{X}\defeq \indlim{a\to \infty}\kk_{\{t\geq a\}}.$$ They proved the following result, which is a generalization of the classical Riemann--Hilbert correspondence (see \cite{Kas84}) to not necessarily regular holonomic D-modules.

\begin{thm}[cf.\ {\cite[Theorem 9.5.3]{DK16}}]\label{thmRH}
The functor of enhanced solutions
$$\SolE{X}\colon \DbholD{X}^\op\to \EbIk{X}$$
is fully faithful.
\end{thm}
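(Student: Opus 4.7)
The strategy is the one classical for a Riemann--Hilbert correspondence: produce a partial inverse on the essential image, so that full faithfulness becomes a reconstruction statement
$$\cM \xlongrightarrow{\sim} \Phi\bigl(\SolE{X}(\cM)\bigr)$$
for every $\cM\in\DbholD{X}$, where $\Phi$ is a suitable enhanced de Rham functor given by $\RIhom$ against the enhanced structure sheaf. Since $\SolE{X}$ and $\Phi$ are adjoint (up to duality and shift), naturality of the unit reduces bijectivity on Hom-sets to bijectivity of the unit map on every object.

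I would begin with a formal reduction to a local statement: both sides behave well under open inclusions and satisfy proper descent, so it suffices to prove the reconstruction isomorphism in a neighbourhood of each point $x\in X$. Using dévissage on the holonomic stratification, together with the compatibility of $\SolE{X}$ with proper pushforwards and duality, the problem further reduces to meromorphic connections along a normal crossing divisor.

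At this point the central tool is the Mochizuki--Kedlaya resolution of turning points: after a sequence of blow-ups $\cM$ is brought to good formal normal form, i.e.\ its formal completion decomposes as $\bigoplus_i \ExpD{\varphi_i}{U}{X}\otimes \cR_i$ with $\cR_i$ regular, and by the Hukuhara--Turrittin--Sibuya theorem this decomposition lifts analytically on small sectors. Since both sides respect such decompositions (via the tensor and convolution products), the proof reduces to two model cases: (i) $\cR$ regular holonomic, where the claim is the classical Kashiwara correspondence, and (ii) the exponential module $\ExpD{\varphi}{U}{X}$, whose enhanced solutions can be computed by hand in terms of the half-space $\{t+\real{\varphi}\geq 0\}$ and the generator $\kE{X}$.

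The main obstacle is the gluing across Stokes directions in step (ii): the sectorial lifts of the formal decomposition disagree on overlaps, and one must verify that the Stokes multipliers are faithfully encoded in the enhanced object $\SolE{X}(\cM)$. This is the very purpose of the enhanced framework — convolution with $\kE{X}$ geometrises the filtration by growth order so that sheaf-theoretic morphisms between enhanced solutions precisely match morphisms of the Stokes datum. Combining this with the descent from the blow-up back to $X$, via compatibility of $\SolE{X}$ with proper direct images, yields the full faithfulness in general.
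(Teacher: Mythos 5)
This theorem is not proved in the paper at all: it is quoted from D'Agnolo--Kashiwara \cite[Theorem 9.5.3]{DK16} and used as a black box throughout, so there is no internal proof to compare your attempt against. Measured against the argument in the cited reference, your proposal does reproduce the correct top-level architecture: one constructs a canonical reconstruction morphism from $\cM$ to $\RIhom^{\mathrm{E}}\big(\SolE{X}(\cM),\cO_X^{\mathrm{E}}\big)$, deduces full faithfulness from a tensor--hom adjunction once this morphism is shown to be an isomorphism, and proves that it is an isomorphism by d\'evissage to normal forms via resolution of turning points and the Hukuhara--Turrittin theorem.

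As a proof, however, the text has genuine gaps: every ``reduces to'' conceals a substantial theorem. Concretely: (i) the reduction to good formal normal form requires not only blow-ups but also ramified coverings (the Levelt--Turrittin decomposition is in general unramified only after a cyclic cover), and one must verify that the reconstruction morphism descends along both proper modifications and finite ramified maps --- this is where the bulk of the technical work in \cite{DK16} lives, and your sketch does not address it; (ii) the sectorial lifts of the formal decomposition live on the real blow-up space, not on $X$, so the model cases (the regular case and $\ExpD{\varphi}{U}{X}$) must be treated for the enhanced De Rham functor on the real blow-up and then pushed forward --- compare \cref{decompositionSolE} in this paper, which needs \cref{lemmaDREBlowUp} for exactly this reason; (iii) your final paragraph misidentifies the difficulty: in the reconstruction approach the gluing across Stokes directions is not an obstacle, because the reconstruction morphism is globally defined and canonical, so being an isomorphism is a local (indeed sectorial) property. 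The assertion that ``the Stokes multipliers are faithfully encoded in $\SolE{X}(\cM)$'' is a \emph{consequence} of full faithfulness, not an available ingredient of its proof, so invoking it at that point is circular.
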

By this result, the object $\SolE{X}(\cM)$ is the topological counterpart of a D-module $\cM$, containing all the information about $\cM$. In particular, it must encode the Stokes phenomenon.

We refer to \cite{DK16} for further details on enhanced ind-sheaves (see also \cite{DK19}, \cite{KS16a}, \cite{Kas16}). Let us only recall the bifunctor (cf.\ \cite[Definition 4.5.2]{DK16})
$$\pi^{-1}(\bullet)\otimes (\bullet)\colon \Dbk{X}\times\EbIk{X}\to\EbIk{X},$$
where $\pi\colon X\times\R\to X$ is the projection. This functor enables us to consider the ``restriction'' $\pi^{-1}\kk_Z\otimes K$ of an enhanced ind-sheaf $K\in\EbIk{X}$ to a locally closed subset $Z\subseteq X$. (One considers this object rather than the inverse image along the embedding since it keeps track of the behaviour at the boundary of $Z$.) In this way, one can use gluing techniques similar to sheaf theory in $\EbIk{X}$ by carrying over sequences of sheaves like
$$0\To \kk_{Z_1\cup Z_2}\To \kk_{Z_1}\oplus \kk_{Z_2}\To \kk_{Z_1\cap Z_2}\To 0$$
for two closed subsets $Z_1,Z_2\subseteq X$. Thus, given a description of an enhanced ind-sheaf on two sets, one can obtain a description on their union. In fact, although the third object of a distinguished triangle is generally unique up to (non-unique) isomorphism only, uniqueness will always be guaranteed in our constructions (by \cite[Proposition 10.1.17]{KS06} or \cite[Corollary IV.1.5]{GM03}).

\subsection{Enhanced sheaves}
There is a natural functor $\Dbk{X\times\R}\to \EbIk{X}$, and we consider sheaves on $X\times\R$ as enhanced ind-sheaves through this functor. Objects of $\Dbk{X\times\R}$ will be called \emph{enhanced sheaves on $X$}. (Note that other authors usually define the category of enhanced sheaves as a certain subcategory of $\Dbk{X\times\R}$, cf.\ \cite{DK18}, \cite{DHMS}. We will not introduce it here, although we actually work in this subcategory.)

There is a \emph{convolution product} on $\Dbk{X\times\R}$ defined by
$$\cF\convs \cG\defeq \RR\mu_!(q_1^{-1}\cF\otimes q_2^{-1}\cG),$$
where the maps $\mu,q_1,q_2\colon X\times\R^2\to X\times\R$ are given by $\mu(x,t_1,t_2)=(x,t_1+t_2)$, $q_1(x,t_1,t_2)=(x,t_1)$ and $q_2(x,t_1,t_2)=(x,t_2)$. Via the natural functor above it corresponds to the convolution functor $\conv$ for enhanced ind-sheaves.

For a locally closed subset $Z\subseteq X$, we will write $\cF_Z\defeq \cF_{Z\times \R}$.

\subsection{Exponential enhanced (ind-)sheaves} \label{sectionEnhancedExponentials} We recall here the definition of enhanced exponentials as introduced in \cite{DK18}.

Let $U\subseteq X$ be an open subset and let $\varphi,\varphi^-,\varphi^+\colon U\to \R$ be continuous functions. Let moreover $Z\subseteq U$ be locally closed with $\varphi^-(x)\leq \varphi^+(x)$ for any $x\in Z$. 

We consider the enhanced sheaves
\begin{align*}
\Exps{\varphi}{Z}{X}&\defeq \pi^{-1}\kk_Z\otimes \kk_{\{t+\varphi\geq 0\}}\in \Dbk{X\times\R},\\
\ExpS{\varphi^+}{\varphi^-}{Z}{X}&\defeq \pi^{-1}\kk_Z\otimes\kk_{\{ -\varphi^+\leq t < -\varphi^- \}}\in \Dbk{X\times\R},
\end{align*}
where we write for short $\{ t+\varphi\geq 0 \}\defeq \{ (x,t)\in X\times\R \mid x\in U, t+\varphi(x)\geq 0 \}$, and similarly $\{ -\varphi^+\leq t < -\varphi^- \}\defeq \{ (x,t)\in X\times\R \mid x\in U, -\varphi^+(x)\leq t < -\varphi^-(x) \}$.

Furthermore, we consider the enhanced ind-sheaves
\begin{align*}
\Expi{\varphi}{Z}{X}&\defeq \kE{X}\conv \Exps{\varphi}{Z}{X}\in \EbIk{X},\\
\ExpI{\varphi^+}{\varphi^-}{Z}{X}&\defeq \kE{X}\conv\ExpS{\varphi^+}{\varphi^-}{Z}{X}\in \EbIk{X}.
\end{align*}
The following lemma is an easy observation.
\begin{lemma}\label{boundedExponent}
If $U\subseteq X$ is open, $\varphi,\psi\colon U\to \R$ are continuous functions, and $Z\subseteq U$ is locally closed such that $\varphi-\psi$ is bounded on $Z$, then there is a canonical isomorphism
$$\Expi{\varphi}{Z}{X}\iso \Expi{\psi}{Z}{X}.$$
\end{lemma}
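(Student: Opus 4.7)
The guiding idea is that convolution with $\kE{X}$ absorbs any bounded shift in the $t$-coordinate; once this is made precise, a sandwiching argument yields the isomorphism. I would proceed in two steps: first a model case of constant shifts, then the general bounded case.

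For the model case, I would show that for any constant $c \in \R$ there is a canonical isomorphism $\Expi{\varphi + c}{Z}{X} \iso \Expi{\varphi}{Z}{X}$. Assuming $c \geq 0$, the inclusion of closed subsets $\{t + \varphi \geq 0\}\cap (Z\times\R) \subseteq \{t + \varphi + c \geq 0\}\cap (Z\times\R)$ induces a restriction morphism $\Exps{\varphi + c}{Z}{X} \to \Exps{\varphi}{Z}{X}$ in $\Dbk{X\times\R}$, and convolution with $\kE{X}$ turns this into an isomorphism. The key input is the fact that $\kE{X} \conv \kk_{\{t \geq c\}} \iso \kE{X}$ for every $c \in \R$, which in turn follows from the cofinality of the reindexing $a \mapsto a - c$ in the ind-system $\indlim{a\to \infty} \kk_{\{t \geq a\}}$.

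For the general case, pick $M \geq 0$ with $|\varphi(x) - \psi(x)| \leq M$ for all $x \in Z$. Then on $Z$ we have $\varphi + M \geq \psi \geq \varphi - M$, giving the chain of inclusions of closed subsets of $Z \times \R$
\[
\{t + \varphi - M \geq 0\} \cap (Z\times\R) \;\subseteq\; \{t + \psi \geq 0\} \cap (Z\times\R) \;\subseteq\; \{t + \varphi + M \geq 0\} \cap (Z\times\R),
\]
hence restriction morphisms $\Exps{\varphi + M}{Z}{X} \to \Exps{\psi}{Z}{X} \to \Exps{\varphi - M}{Z}{X}$ in $\Dbk{X\times\R}$. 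Applying $\kE{X} \conv (-)$ and using the model case to canonically identify both outer terms with $\Expi{\varphi}{Z}{X}$ (and noting that the composite $\Exps{\varphi + M}{Z}{X} \to \Exps{\varphi - M}{Z}{X}$ is the direct restriction, hence corresponds to the canonical shift isomorphism), we obtain a morphism $\Expi{\varphi}{Z}{X} \to \Expi{\psi}{Z}{X}$ and, by symmetry on swapping the roles of $\varphi$ and $\psi$, a morphism in the opposite direction; a formal check of compositions shows they are mutually inverse.

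\textbf{Main obstacle.} The substantive point is really only the model case: one must verify that $\kE{X} \conv \kk_{\{t \geq c\}} \iso \kE{X}$ naturally in $c$ and that this isomorphism is compatible with the restriction morphisms attached to nested closed subsets. Everything else — the chain of inclusions, the exactness of $\kE{X} \conv (-)$, and the composition checks — is a routine formal manipulation in $\EbIk{X}$.
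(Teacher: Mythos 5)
The paper offers no proof of this lemma at all (it is dismissed as ``an easy observation''), so there is nothing to compare against; your argument is correct and is the standard one, resting on the two facts that $\kE{X}\conv\kk_{\{t\geq c\}}\iso\kE{X}$ by cofinality and that convolution with $\kE{X}$ therefore collapses bounded shifts. The only point I would tighten is the final ``formal check of compositions'': rather than producing two morphisms and verifying they are mutually inverse, it is cleanest to extend the sandwich one more step, $\{t+\varphi-M\geq 0\}\subseteq\{t+\psi\geq 0\}\subseteq\{t+\varphi+M\geq 0\}\subseteq\{t+\psi+2M\geq 0\}$ on $Z\times\R$, note that after applying $\kE{X}\conv(-)$ each composite of two consecutive restriction maps is the canonical shift isomorphism from your model case, and conclude that the middle arrow $\Expi{\varphi}{Z}{X}\to\Expi{\psi}{Z}{X}$ is both a split epimorphism and a split monomorphism, hence an isomorphism. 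This avoids having to know in advance that the individual restriction maps are invertible when factoring the identity.
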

It is a fundamental observation (cf.\ \cite[Corollary 9.4.12]{DK16}) that $\SolE{X}(\ExpD{\varphi}{U}{X})\iso \Expi{\real{\varphi}}{U}{X}$, where $\ExpD{\varphi}{U}{X}$ is the exponential D-module for some meromorphic function $\varphi\in\cO_X(*D)$ with poles on a closed hypersurface $D\subset X$ and $U=X\setminus D$.

\section{Stokes phenomena for enhanced solutions}
In dimension one, the Stokes phenomenon describes the fact that around an irregular singularity formal solutions are not necessarily convergent, but admit asymptotic expansions on sufficiently small sectors. In the language of D-modules, this is known as the theorem of Hukuhara--Turrittin, stating that the formal Levelt--Turrittin decomposition lifts to a local analytic decomposition on the real blow-up space (cf.\ \cite{Mal91}). We now explain how this is expressed in terms of enhanced ind-sheaves.

Let $X=\C$ and let $\cM\in\ModholD{X}$ be a meromorphic connection with pole at $0$, i.e.\ $\cM(*0)\iso\cM$ and $\Sing{\cM}=\{0\}$. Assume $\cM$ has an (unramified) Levelt--Turrittin decomposition at $0$, i.e.\
$$\fstalk{\cM}{0}\iso \bigoplus_{i\in I} \fstalk{(\ExpD{\varphi_i}{U}{X}\tensD \mathcal{R}_i)}{0}$$
for some finite index set $I$, meromorphic functions $\varphi_i\in \cO_X(*0)$ and regular holonomic $\D_X$-modules $\cR_i$. Here, $\fstalk{\cM}{0}\defeq \widehat{\cO_{X,0}}\otimes_{\cO_{X,0}} \cM_0$ is the formal completion of the stalk.

The following result has been stated in \cite[§9.8]{DK16}. It is also given as a corollary of a more general result in \cite[Corollary 3.7]{IT18}. We give a direct proof in the unramified case.
\begin{prop}\label{decompositionSolE}
If $\cM$ has a Levelt--Turrittin decomposition at $0$, then for any direction $\theta\in \R/2\pi\Z$ there exist constants $\eps, R\in \R_{>0}$, determining an open sector $S_\theta=\{ z\in X \mid 0<|z|<R,\; \arg z \in (\theta-\eps,\theta+\eps)\}$, such that we have an isomorphism in $\EbIk{X}$
$$\pi^{-1}\kk_{S_\theta}\otimes \SolE{X}(\cM)\iso \pi^{-1}\kk_{S_\theta}\otimes \bigoplus_{i\in I}(\Expi{\real{\varphi_i}}{U}{X})^{r_i}.$$
\end{prop}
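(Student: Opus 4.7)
The plan is to reduce to the classical Hukuhara--Turrittin theorem and transport the resulting analytic decomposition through the functor $\SolE{X}$. Fix a direction $\theta \in \R/2\pi\Z$. Since $\cM$ admits an unramified Levelt--Turrittin decomposition at $0$, the Hukuhara--Turrittin theorem (see e.g.\ \cite{Mal91}, \cite{Sab13}) produces $R,\eps > 0$ such that the formal isomorphism at $0$ lifts to an honest analytic isomorphism of $\D_X$-modules on the open sector $S_\theta = \{0<|z|<R,\, \arg z\in (\theta-\eps, \theta+\eps)\}$:
$$\cM|_{S_\theta} \iso \bigoplus_{i\in I}\bigl(\ExpD{\varphi_i}{U}{X}\tensD \cR_i\bigr)\big|_{S_\theta}.$$
(Unramifiedness is what allows us to stay on $X$ rather than passing to a ramified cover, and $S_\theta$ is open so this is a genuine $\D_X|_{S_\theta}$-linear statement.)

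Applying $\SolE{X}$ and using that the operation $\pi^{-1}\kk_{S_\theta}\otimes(-)$ on an enhanced ind-sheaf $\SolE{X}(\cN)$ depends only on the restriction of $\cN$ to the open set $S_\theta$ (locality of $\SolE{X}$), together with the fact that $\SolE{X}$ takes direct sums to direct sums, we obtain
$$\pi^{-1}\kk_{S_\theta}\otimes \SolE{X}(\cM) \iso \pi^{-1}\kk_{S_\theta}\otimes \bigoplus_{i\in I} \SolE{X}\bigl(\ExpD{\varphi_i}{U}{X}\tensD \cR_i\bigr).$$
For each $i$, the fundamental observation $\SolE{X}(\ExpD{\varphi_i}{U}{X})\iso \Expi{\real{\varphi_i}}{U}{X}$ combined with the compatibility of $\SolE{X}$ with $\tensD$ by a regular meromorphic connection shows that the $i$-th summand is, on $U$, the enhanced exponential $\Expi{\real{\varphi_i}}{U}{X}$ twisted by the dual of the local system $\cL_i$ of horizontal sections of $\cR_i$. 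Since the sector $S_\theta$ is simply connected, $\cL_i|_{S_\theta}$ is constant of rank $r_i = \operatorname{rank}(\cR_i)$, and we conclude
$$\pi^{-1}\kk_{S_\theta}\otimes \SolE{X}\bigl(\ExpD{\varphi_i}{U}{X}\tensD \cR_i\bigr) \iso \pi^{-1}\kk_{S_\theta}\otimes \bigl(\Expi{\real{\varphi_i}}{U}{X}\bigr)^{r_i}.$$
Summing over $i$ gives the claim.

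The main obstacle is the identification of the $i$-th summand after applying $\SolE{X}$: one must carefully track how $\SolE{X}$ interacts with the derived tensor product by a regular holonomic $\D_X$-module and verify that no unwanted shift appears. The key input is that, for a regular meromorphic connection with its unique singularity at $0$, the enhanced solutions on $U = X\setminus\{0\}$ reduce via the natural embedding $\Dbk{U}\to\EbIk{U}$ to an ordinary local system concentrated in a single degree, which becomes trivial on the simply connected $S_\theta$. Once this translation is in place, Steps 1 and 2 are a faithful rendering of the classical Hukuhara--Turrittin theorem into the language of enhanced ind-sheaves.
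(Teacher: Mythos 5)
There is a genuine gap, and it is the ``locality'' step: you assert that $\pi^{-1}\kk_{S_\theta}\otimes \SolE{X}(\cN)$ depends only on the restriction $\cN|_{S_\theta}$ to the \emph{open} sector. This is false, and in fact the whole content of the proposition lives in its failure. Since $\Sing{\cM}=\{0\}$ and $S_\theta$ is a simply connected open subset of $X\setminus\{0\}$, the restriction $\cM|_{S_\theta}$ is the trivial flat connection of rank $r=\sum_i r_i$ --- and so is the restriction of $\bigoplus_i(\ExpD{\varphi_i}{U}{X}\tensD\cR_i)$. Your Step 1 is therefore vacuously true but carries none of the asymptotic information of the Hukuhara--Turrittin theorem. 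Worse, if your locality claim held, the same argument would give $\pi^{-1}\kk_{S_\theta}\otimes\SolE{X}(\cM)\iso\pi^{-1}\kk_{S_\theta}\otimes(\kE{X})^{r}$ for \emph{every} meromorphic connection with pole at $0$, erasing all exponential factors; this contradicts \cref{lemmaHomExp}, which shows that $\Expi{\real{\varphi_1}}{S}{\PP}$ and $\Expi{\real{\varphi_2}}{S}{\PP}$ are not isomorphic on a sector when $\real{(\varphi_1-\varphi_2)}$ is unbounded there. The functor $\pi^{-1}\kk_{S_\theta}\otimes(\bullet)$ is used precisely because it remembers the behaviour at the boundary of $S_\theta$, i.e.\ as $z\to 0$.

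The correct argument has to work on the real blow-up $\varpi\colon\widetilde{X}\to X$. The paper's key step (\cref{lemmaDREBlowUp}) is an identity of the form $\pi^{-1}\kk_V\otimes\DRE{\widetilde X}(\cN)\iso\pi^{-1}\kk_V\otimes\DRE{\widetilde X}(\cN\otimes\kk_{\overline V})$ where $\overline V$ is the closure of $V=\varpi^{-1}(S_\theta)$ in $\widetilde X$, hence contains an arc of the boundary circle $\varpi^{-1}(0)$. The Hukuhara--Turrittin theorem is then invoked in its genuine form: for $\eps,R$ small enough, $\cM^{\cA}$ decomposes on a neighbourhood of that closed arc (a decomposition of modules over the sheaf of functions admitting asymptotic expansions, not of $\D_X$-modules on the open sector). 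Only after this does one push forward by $\mathrm{E}\varpi_{!!}$ and descend to $X$. Your appeal to the compatibility of $\SolE{X}$ with $\tensD$ by the regular part $\cR_i$ and to the triviality of its local system on a simply connected sector is fine as a final bookkeeping step, but it cannot substitute for the passage through the blow-up, which is where the decomposition with the correct exponential factors is actually produced.
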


We first establish the following lemma, which is the crucial step in proving the proposition. We denote by $\varpi\colon \widetilde{X}\to X$ the real blow-up of $X$ at $0$ and refer to \cite[§7.3 and §9.2]{DK16} for details and notation regarding D-modules and enhanced De Rham functors on blow-up spaces.
\begin{lemma}\label{lemmaDREBlowUp}
	Let $V\subseteq \widetilde{X}$ be open and $\cN\in\Db{\D_{\widetilde{X}}^\cA}$. Then there is an isomorphism in $\EbIk{\widetilde{X}}$
	$$\pi^{-1}\kk_V\otimes \DRE{\widetilde{X}}(\cN)\iso \pi^{-1}\kk_V\otimes \DRE{\widetilde{X}}(\cN\otimes \kk_{\overline{V}}).$$
\end{lemma}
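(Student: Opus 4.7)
The plan is to reduce the isomorphism to the vanishing of a third term in a distinguished triangle coming from the sheaf-theoretic short exact sequence attached to $\overline{V}\subseteq \widetilde{X}$ and its complement.

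First I would start from the canonical short exact sequence of sheaves on $\widetilde{X}$
$$0\To \kk_{\widetilde{X}\setminus\overline{V}}\To \kk_{\widetilde{X}}\To \kk_{\overline{V}}\To 0,$$
which exists because $\overline{V}$ is closed. Tensoring over $\kk$ with $\cN$ preserves the $\D_{\widetilde{X}}^{\cA}$-module structure (the differential operators act on the $\cN$-factor only), so one obtains a distinguished triangle in $\Db{\D_{\widetilde{X}}^{\cA}}$. Applying the triangulated enhanced De Rham functor $\DRE{\widetilde{X}}$ yields the distinguished triangle
$$\DRE{\widetilde{X}}(\cN\otimes \kk_{\widetilde{X}\setminus\overline{V}})\To \DRE{\widetilde{X}}(\cN)\To \DRE{\widetilde{X}}(\cN\otimes \kk_{\overline{V}})\ToPO$$
in $\EbIk{\widetilde{X}}$, and tensoring with $\pi^{-1}\kk_V$ (again a triangulated operation) reduces the claim to showing
$$\pi^{-1}\kk_V\otimes \DRE{\widetilde{X}}(\cN\otimes \kk_{\widetilde{X}\setminus\overline{V}})\iso 0.$$

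For the vanishing, the topological input is that $V$ is open and contained in $\overline{V}$, hence $V$ lies in the interior of $\overline{V}$; in particular $V\cap(\widetilde{X}\setminus\overline{V})=\emptyset$. To turn this geometric disjointness into a statement about the enhanced object, I would invoke a compatibility of $\DRE{\widetilde{X}}$ with tensor products by $\kk$-sheaves of the form
$$\DRE{\widetilde{X}}(\cN\otimes F)\iso \pi^{-1}F\otimes \DRE{\widetilde{X}}(\cN),\qquad F\in\Dbk{\widetilde{X}},$$
which follows from the construction of the enhanced De Rham complex (essentially $\OmegaE_{\widetilde{X}}\tensL{\D^{\cA}_{\widetilde{X}}}(-)$) together with the projection-formula type identities $\pi^{-1}F\otimes\pi^{-1}G\iso\pi^{-1}(F\otimes G)$. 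Applied with $F=\kk_{\widetilde{X}\setminus\overline{V}}$ and combined with $\pi^{-1}\kk_V$, one gets $\pi^{-1}\kk_{V\cap(\widetilde{X}\setminus\overline{V})}\otimes\DRE{\widetilde{X}}(\cN)=\pi^{-1}\kk_{\emptyset}\otimes\DRE{\widetilde{X}}(\cN)=0$.

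The main obstacle I expect is verifying the compatibility $\DRE{\widetilde{X}}(\cN\otimes F)\iso \pi^{-1}F\otimes \DRE{\widetilde{X}}(\cN)$ in the $\D^{\cA}_{\widetilde{X}}$-setting, where one works on the real blow-up rather than on $X$ itself. For the usual $\DRE{X}$ on a complex manifold this is essentially formal from the definition via the de Rham complex, but in the blow-up/$\cA$-version of \cite[\S7.3, \S9.2]{DK16} it deserves a short check; alternatively, one can avoid it by working stalk-wise at points of $V$ and points of $\widetilde{X}\setminus\overline{V}$ separately, exploiting that the second term in $\pi^{-1}\kk_V\otimes(-)$ only sees the behaviour over $V$. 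Everything else in the argument is the formalism of distinguished triangles in $\EbIk{\widetilde{X}}$.
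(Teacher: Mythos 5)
Your reduction, via the distinguished triangle attached to $0\To \kk_{\widetilde{X}\setminus\overline{V}}\To\kk_{\widetilde{X}}\To\kk_{\overline{V}}\To 0$, to the vanishing of $\pi^{-1}\kk_V\otimes\DRE{\widetilde{X}}(\cN\otimes\kk_{\widetilde{X}\setminus\overline{V}})$ is formally fine, but the step you flag as ``deserving a short check'' is where the entire content of the lemma sits, and the compatibility you invoke is not correct as stated. In the ind-sheaf setting there are two distinct functors from sheaves, $\iota_{\widetilde{X}}$ and $\beta_{\widetilde{X}}$; the operation $\pi^{-1}F\otimes(\bullet)$ in the statement of the lemma is the $\iota$-version, whereas unwinding $\DRE{\widetilde{X}}(\cN\otimes F)\iso\OmegaE{\widetilde{X}}\tensL{\beta\pi^{-1}\D^{\cA}_{\widetilde{X}}}\beta\pi^{-1}(\cN\otimes F)$ and applying \cite[Theorem 5.4.19]{KS01} yields $\beta\pi^{-1}F\otimes\DRE{\widetilde{X}}(\cN)$, \emph{not} $\iota\pi^{-1}F\otimes\DRE{\widetilde{X}}(\cN)$. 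These genuinely differ: $\beta_{\widetilde{X}}\kk_{\overline{V}}$ is an ind-limit over shrinking neighbourhoods of $\overline{V}$, and enhanced ind-sheaves are not determined stalkwise, so your proposed pointwise fallback is unavailable as well. A warning sign is that if your $\iota$-version of the compatibility were true, the lemma would follow in one line from $\kk_V\otimes\kk_{\overline{V}}=\kk_V$, with no need for any distinguished triangle; the nontrivial point is being assumed rather than proved.

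The paper's proof consists precisely in bridging this $\beta$/$\iota$ discrepancy: using the descriptions $\iota_{\widetilde{X}}\kk_V\iso\indlim{U\subset\subset\widetilde{X}}\kk_{V\cap U}$ and $\beta_{\widetilde{X}}\kk_{\overline{V}}\iso\indlim{U\subset\subset\widetilde{X},\,W\supset\overline{V}}\kk_{U\cap\overline{W}}$ from \cite{KS01}, one checks $\beta_{\widetilde{X}}\kk_{\overline{V}}\otimes\iota_{\widetilde{X}}\kk_V\iso\iota_{\widetilde{X}}\kk_V$ (this is where openness of $V$ and $V\subseteq W$ for every open $W\supset\overline{V}$ enter), and then moves the factor $\beta\pi^{-1}\kk_{\overline{V}}$ across $\tensL{\beta\pi^{-1}\D^{\cA}_{\widetilde{X}}}$ by \cite[Theorem 5.4.19]{KS01}. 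Your strategy could be repaired along the same lines --- with the corrected ($\beta$-) compatibility one indeed gets $\iota\pi^{-1}\kk_V\otimes\beta\pi^{-1}\kk_{\widetilde{X}\setminus\overline{V}}\iso 0$, since closed subsets of $\widetilde{X}\setminus\overline{V}$ do not meet $V$ --- but as written the crucial identification is a genuine gap.
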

\begin{proof}
	We will use the notation from \cite{KS01} here to emphasize the difference between the two functors $\iota_{\widetilde{X}}$ and $\beta_{\widetilde{X}}$ from sheaves to ind-sheaves. (Note that $\beta_{\widetilde{X}}$ is often suppressed in the notational conventions of \cite{DK16}.)
	
	By \cite[Lemma 3.3.3]{KS01} and \cite[Proposition 4.2.14]{KS01}, respectively, we have
	$$\iota_{\widetilde{X}}\mspace{1mu}\kk_V \iso \indlim{U\subset\subset \widetilde{X}} \kk_{V\cap U}
	\qquad\text{and}\qquad
	\beta_{\widetilde{X}}\mspace{1mu}\kk_{\overline{V}}\iso \indlim{U\subset\subset \widetilde{X},W\supset \overline{V}}\kk_{U\cap \overline{W}}$$
	and therefore
		$$\beta_{\widetilde{X}}\mspace{1mu}\kk_{\overline{V}}\otimes \iota_{\widetilde{X}}\mspace{1mu}\kk_V \iso \indlim{U\subset\subset \widetilde{X}, U'\subset\subset\widetilde{X}}\kk_{V\cap U\cap U'}\iso \iota_{\widetilde{X}}\mspace{1mu}\kk_V,$$
	since $U\cap U'$ ranges through the family of all relatively compact open subsets of $\widetilde{X}$ as $U$ and $U'$ do.
	This now enables us to use \cite[Theorem 5.4.19]{KS01} and obtain
	\begin{align*}
		\pi^{-1}\kk_V\otimes \DRE{\widetilde{X}}(\cN) &\iso (\OmegaE{\widetilde{X}}\tensL{\beta\mspace{1mu}\pi^{\mspace{1mu}-1}\D_{\widetilde{X}}^\cA} \beta\mspace{1mu}\pi^{\mspace{1mu}-1}\cN) \otimes \iota_{\widetilde{X}\times \overline{\R}}\mspace{2mu}\pi^{\mspace{1mu}-1}\kk_V\\
		&\iso (\OmegaE{\widetilde{X}}\tensL{\beta\mspace{1mu}\pi^{\mspace{1mu}-1}\D_{\widetilde{X}}^\cA} \beta\mspace{1mu}\pi^{\mspace{1mu}-1}\cN)\otimes (\beta\mspace{1mu}\pi^{\mspace{1mu}-1}\kk_{\overline{V}}\otimes \iota_{\widetilde{X}\times \overline{\R}}\mspace{2mu}\pi^{\mspace{1mu}-1}\kk_V)\\
		&\iso \big(\OmegaE{\widetilde{X}}\tensL{\beta\mspace{1mu}\pi^{\mspace{1mu}-1}\D_{\widetilde{X}}^\cA} \beta\mspace{1mu}\pi^{\mspace{1mu}-1}(\cN\otimes \kk_{\overline{V}})\big)\otimes \iota_{\widetilde{X}\times \overline{\R}}\mspace{2mu}\pi^{\mspace{1mu}-1}\kk_V\\
		&\iso \pi^{-1}\kk_V\otimes \DRE{\widetilde{X}}(\cN\otimes \kk_{\overline{V}}).
	\end{align*}\vspace{-1cm}

\end{proof}\vspace{0.2cm}

\begin{proof}[Proof of \cref{decompositionSolE}]
Still using the notation of \cite[§7.3 and §9.2]{DK16} and in particular \cite[Corollary 9.2.3]{DK16}, one can compute
\begin{align*}
\pi^{-1}\kk_{S_\theta} \otimes \DRE{X}(\cM) 
&\iso \mathrm{E}\varpi_{!!}\big( \pi^{-1}\kk_{\varpi^{-1}(S_\theta)} \otimes \DRE{\widetilde{X}}(\cM^\cA) \big)\\
&\overset{(\star)}{\iso} \mathrm{E}\varpi_{!!} \big( \pi^{-1}\kk_{\varpi^{-1}(S_\theta)} \otimes \DRE{\widetilde{X}}(\cM^\cA \otimes \kk_{\overline{\varpi^{-1}(S_\theta)}}) \big)\\
&\overset{(\blacktriangle)}{\iso} \mathrm{E}\varpi_{!!} \Big( \pi^{-1}\kk_{\varpi^{-1}(S_\theta)} \otimes \DRE{\widetilde{X}}\big(\bigoplus_{i\in I}\big( (\ExpD{\varphi_i}{U}{X})^\cA \big)^{r_i}\otimes \kk_{\overline{\varpi^{-1}(S_\theta)}}\big) \Big)\\
&\iso \pi^{-1}\kk_{S_\theta}\otimes \bigoplus_{i\in I} \big(\DRE{X}(\ExpD{\varphi_i}{U}{X})\big)^{r_i}.
\end{align*}
Here $(\star)$ follows from \cref{lemmaDREBlowUp} and $(\blacktriangle)$ follows from the classical Hukuhara--Turrittin theorem if $\eps$ and $R$ are small enough.

The statement about the solution functor (instead of the De Rham functor) is easily deduced by duality.
\end{proof}

The Stokes phenomenon arises through the fact that the isomorphism from \cref{decompositionSolE} depends on $\theta$.
Note that, in contrast to common formulations of the Hukuhara--Turrittin theorem (cf.\ e.g.\ \cite[Théorème (1.4)]{Mal91}), the statement of \cref{decompositionSolE} does not involve blow-ups.

\section{D-modules of pure Gaussian type}\label{chapGaussianType}
Let $\PP=\PP^1(\C)$ be the analytic complex projective line, denote by $\C=\PP\smallsetminus\{\infty\}$ the affine chart with local coordinate $z$ at $0$ and by $j\colon \C\hookrightarrow \PP$ its embedding.

\begin{defi}\label{defGaussian}
Let $C\subset\C^\ast$ be a finite (non-empty) subset. A holonomic $\D_\PP$-module $\cM$ is said to be of \emph{pure Gaussian type $C$} if the following conditions hold:
\begin{itemize}
\item[(a)] $\cM\simeq \cM(*\infty)$ (as $\D_\PP$-modules).
\item[(b)] $\Sing{\cM}=\{\infty\}$.
\item[(c)] There exist regular holonomic $\D_\PP$-modules $\cR_c$ such that $\cM$ has a Levelt--Turrittin decomposition at $\infty$ of the form
$$\cM\widehat{|}_\infty\iso \bigoplus_{c\in C} \left( \cE^{-\frac{c}{2}z^2}_{\C|\PP}\tensD\cR_c \right)\widehat{|}_\infty.$$
\end{itemize}
In other words, $\cM$ is a meromorphic connection on $\PP$ with a pole at $\infty$ and an (unramified) Levelt--Turrittin decomposition at $\infty$ with exponential factors $-\frac{c}{2}z^2$. (Note that polynomial functions in $z$ extend to meromorphic functions on $\PP$.)
	
The rank of $\cR_c$ will be denoted by $r_c$ and the family of these ranks will be denoted by $\ranks\defeq (r_c)_{c\in C}$.
\end{defi}

Some properties of the enhanced solutions of such D-modules of pure Gaussian type are collected in the following lemma.

\begin{lemma}\label{GaussBasicProperties}
Let $\cM\in \ModholD{\PP}$ be of pure Gaussian type $C$. Then:
\begin{itemize}
\item[(i)] $\pi^{-1}\kk_{\C}\otimes\SolE{\PP}(\cM)\iso \SolE{\PP}(\cM)$.
\item[(ii)] For any direction $\theta\in\R/2\pi\Z$, there exists a small sector $S_\theta$ at $\infty$ (with central direction $\theta$) such that $$\pi^{-1}\kk_{S_\theta}\otimes\SolE{\PP}(\cM)\iso \pi^{-1}\kk_{S_\theta}\otimes \bigoplus\limits_{c\in C}\big(\Expi{-\real{\frac{c}{2}z^2}}{\C}{\PP}\big)^{r_c}.$$
\item[(iii)] For any open $B\subset \C$ such that $\overline{B}\subset \C$ (where $\overline{B}$ denotes the closure of $B$ in $\PP$), one has
$$\pi^{-1}\kk_B \otimes \SolE{\PP}(\cM)\iso \pi^{-1}\kk_B\otimes (\kE{\PP})^r.$$
\end{itemize}
\end{lemma}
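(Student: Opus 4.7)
The plan is to establish the three parts independently, each by specializing a standard principle to the Gaussian setting.

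Part (i) follows from the general principle that $\SolE{\PP}$ intertwines the localization operation $\cN \mapsto \cN(*D)$ on $\DbholD{\PP}$ with the tensor operation $\pi^{-1}\kk_{\PP \setminus D} \otimes -$ on $\EbIk{\PP}$, for any closed hypersurface $D \subset \PP$ (see~\cite{DK16}). Combined with assumption~(a), which asserts $\cM \iso \cM(*\infty)$, this yields (i) at once. Part (ii) is in turn a direct application of \cref{decompositionSolE} at the singularity $\infty$, fed with the Levelt--Turrittin decomposition of assumption~(c): the exponential factors $\varphi_i$ appearing in the general theorem become $-\frac{c}{2}z^2$ for $c \in C$, and the multiplicities $r_i$ become the ranks $r_c$ of the regular factors $\cR_c$.

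For part (iii), I would first use assumption~(b) to observe that $\cM|_\C$ is $\cO_\C$-coherent, hence a vector bundle with flat connection on $\C$ of rank $r \defeq \sum_{c \in C} r_c$. Since $\C \cong \R^2$ is simply connected, the associated local system of horizontal sections is trivial, so $\cM|_\C \iso (\cO_\C)^r$ as $\D_\C$-modules. Combining this with the standard identification $\SolE{\PP}(\cO_\PP^r) \iso (\kE{\PP})^r$ for regular holonomic modules and with the compatibility of the enhanced solution functor with restriction to open sets, one obtains that $\SolE{\PP}(\cM)$ and $(\kE{\PP})^r$ agree after restriction to an open neighborhood of $\overline{B}$ contained in $\C$. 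To descend this to the desired statement, I would invoke the identity $\pi^{-1}\kk_B \iso \pi^{-1}\kk_B \otimes \pi^{-1}\kk_V$, valid whenever $B \subset V$ is open: choosing $V$ with $\overline{B} \subset V \subset \C$, the isomorphism at level $V$ transports to one after $\pi^{-1}\kk_B \otimes -$.

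The main point requiring genuine care, though more a bookkeeping issue than a true obstacle, is in part (iii): one has to make precise what ``restriction of $\SolE{\PP}(\cM)$ to an open set'' means and verify that it behaves well under the bifunctor $\pi^{-1}\kk_{(-)} \otimes -$. Once this is handled, the remaining parts reduce to the results already recalled in the paper.
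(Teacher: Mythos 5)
Your arguments for (i) and (ii) are correct and coincide with the paper's: (i) is the compatibility of $\SolE{\PP}$ with localization, $\SolE{\PP}(\cM(*\infty))\iso\pi^{-1}\kk_{\C}\otimes\SolE{\PP}(\cM)$ (\cite[Corollary 9.4.11]{DK16}), combined with condition (a) of \cref{defGaussian}; and (ii) is \cref{decompositionSolE} applied at $\infty$ with the Levelt--Turrittin data of condition (c).

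Part (iii), however, has a genuine gap, located exactly at the point you flag and then set aside as ``bookkeeping''. The compatibility of the enhanced solution functor with open restriction gives an isomorphism of the \emph{intrinsic} restrictions $\mathrm{E}j^{-1}\SolE{\PP}(\cM)\iso\SolE{V}(\cM|_V)\iso(\kE{V})^{r}$ in $\EbIk{V}$, where $j\colon V\hookrightarrow\PP$ is the open embedding. This is a strictly weaker datum than an isomorphism $\pi^{-1}\kk_V\otimes\SolE{\PP}(\cM)\iso\pi^{-1}\kk_V\otimes(\kE{\PP})^r$: as recalled in Section~2, one works with $\pi^{-1}\kk_V\otimes(\bullet)$ precisely because it keeps track of the behaviour (the growth of solutions) at the boundary of $V$, which the intrinsic restriction forgets. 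Your bridging identity $\pi^{-1}\kk_B\iso\pi^{-1}\kk_B\otimes\pi^{-1}\kk_V$ is true but does not close this gap, since $\pi^{-1}\kk_V\otimes K$ is not recovered from $\mathrm{E}j^{-1}K$ when $V$ is open. The decisive symptom is that nothing in your descent step uses more than ``$B\subseteq V\subseteq\C$ open'', so the argument applies verbatim with $B=V=\C$ and, together with (i), would yield $\SolE{\PP}(\cM)\iso\pi^{-1}\kk_\C\otimes(\kE{\PP})^{r}$ for \emph{every} $\cM$ of pure Gaussian type of total rank $r$. This is false: it would make $\SolE{\PP}(\ExpD{-\frac{c}{2}z^2}{\C}{\PP})$ and $\SolE{\PP}(\ExpD{-\frac{d}{2}z^2}{\C}{\PP})$ isomorphic for all $c\neq d$, contradicting the full faithfulness of $\SolE{\PP}$ (\cref{thmRH}), and more generally it would erase all Stokes data.

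The hypothesis $\overline{B}\subset\C$ must therefore do real work, and it is not used in your descent. What actually makes (iii) true is that the exponents $-\real{\frac{c}{2}z^2}$ are \emph{bounded} on the relatively compact set $B$, so that after applying $\pi^{-1}\kk_B\otimes(\bullet)$ the exponential factors trivialize by \cref{boundedExponent}; equivalently, one can factor through a compact neighbourhood $\overline{W}$ of $\overline{B}$ inside $\C$, whose closed (hence proper) embedding causes no boundary discrepancy. The paper encapsulates this in the reference to \cite[Lemma 2.7.6]{DK19}, which is a statement directly about $\pi^{-1}\kk_Z\otimes\SolE{\PP}(\cM)$ under the hypothesis that $\overline{Z}$ avoids the singular locus. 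Your first two observations for (iii) --- that $\cM|_{\C}$ is a flat bundle trivialized by simple connectedness of $\C$, and that the enhanced solutions of the trivial connection are $(\kE{\PP})^{r}$ --- are correct and are exactly the inputs to that lemma; it is only the descent mechanism that needs to be replaced.
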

\begin{proof}
The statements (i) and (ii) directly follow from Proposition \cite[Corollary 9.4.11]{DK16} and Proposition \ref{decompositionSolE}, respectively.

The third assertion is proved using \cite[Lemma 2.7.6]{DK19} and the fact that $\cM$ is non-singular outside $\infty$.
\end{proof}
On the other hand, since $\PP$ is compact, we have the following statement about the global structure of $\SolE{X}(\cM)$. It is a direct application of \cite[Lemma 2.5.1]{DHMS} (cf.\ also \cite[Definition 4.9.2 and Theorem 9.3.2]{DK16}).
\begin{lemma}\label{propGlobalEnhSheaf}
Let $\cM\in\ModholD{\PP}$ be of pure Gaussian type. Denote by $\tilde{j}\colon \C\times\R\hookrightarrow\PP\times\R$ the embedding. There exists $\cF\in\Dbk{\C\times \R}$ such that
$$\SolE{\PP}(\cM)\iso \kE{\PP}\conv \tilde{j}_!\cF.$$
\end{lemma}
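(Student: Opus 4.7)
The plan is to apply \cite[Lemma 2.5.1]{DHMS} to $K\defeq\SolE{\PP}(\cM)$. That lemma (whose setup relies on \cite[Definition 4.9.2 and Theorem 9.3.2]{DK16}) asserts that any $\R$-constructible enhanced ind-sheaf $K$ on $\PP$ satisfying $\pi^{-1}\kk_{\C}\otimes K\iso K$ can be written as $\kE{\PP}\conv\tilde{j}_!\cF$ for some $\cF\in\Dbk{\C\times\R}$. So the task reduces to verifying these two hypotheses for our $K$.

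The $\R$-constructibility of $K$ is built into the Riemann--Hilbert package: since $\cM$ is holonomic, \cite[Theorem 9.3.2]{DK16} places $\SolE{\PP}(\cM)$ in the full subcategory $\EbRcIk{\PP}\subset\EbIk{\PP}$ of $\R$-constructible enhanced ind-sheaves (cf.\ \cite[Definition 4.9.2]{DK16}). In particular $K$ is stable, that is, $\kE{\PP}\conv K\iso K$, which is what makes a presentation of the desired form plausible at all. The second hypothesis, $\pi^{-1}\kk_{\C}\otimes K\iso K$, is exactly \cref{GaussBasicProperties}(i); it is in turn a direct consequence of $\cM\iso\cM(*\infty)$ via \cite[Corollary 9.4.11]{DK16}.

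With both hypotheses verified, \cite[Lemma 2.5.1]{DHMS} immediately produces a $\cF\in\Dbk{\C\times\R}$ with $\SolE{\PP}(\cM)\iso\kE{\PP}\conv\tilde{j}_!\cF$, as claimed. There is no genuine obstacle: the real content lies in the two cited external results, and the proof here is just the observation that their hypotheses have already been assembled, one in the general Riemann--Hilbert machinery of \cite{DK16} and the other in the preceding lemma about enhanced solutions of D-modules of pure Gaussian type.
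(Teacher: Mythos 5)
Your proof is correct and follows exactly the route the paper takes: the paper also derives this as a direct application of \cite[Lemma 2.5.1]{DHMS}, citing \cite[Definition 4.9.2 and Theorem 9.3.2]{DK16} for the constructibility input, with the hypothesis $\pi^{-1}\kk_{\C}\otimes K\iso K$ supplied by \cref{GaussBasicProperties}(i). You merely make the verification of the hypotheses slightly more explicit than the paper does.
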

Thus, the enhanced solutions of a D-module of pure Gaussian type are determined by a globally defined enhanced sheaf which restricts to zero on the singularity. The aim of the next sections will be to describe such an enhanced sheaf, and this goal is achieved in \cref{thmBigSectors}.

\section{Stokes directions and width of sectors}

Let $C\subset \C^\times$ be a finite subset and let $\cM\in \ModholD{\PP}$ be of pure Gaussian type $C$.

In this section, we extend the decomposition from \cref{GaussBasicProperties} (ii) to a decomposition of $\SolE{\PP}(\cM)$ on sectors around $\infty$ that intersect at most one Stokes line for each pair $c,d\in C$. That is, we give a more precise description of how ``small'' the sectors' width has to be.

As we have seen, the enhanced solutions of $\cM$ are not interesting at the singularity but in close neighbourhoods, which are then subsets of $\C=\PP\setminus\{\infty\}$. Therefore, we will set up everything in the complex plane.

\begin{lemmadefi}\label{defStokesLines}
Let $c,d\in C$, $c\neq d$. The set
$$\mathrm{St}_{c,d}=\left\{z\in \C \left| -\real{\frac{c}{2}z^2}=-\real{\frac{d}{2}z^2}\right. \right\}$$
is the union of four closed half-lines with initial point $0$, perpendicular to one another.
These half-lines are called the \emph{Stokes lines} of the pair $c,d$. Their directions (i.e.\ the arguments of the points on the Stokes lines) are called the \emph{Stokes directions} (of the pair $c,d$).

We say that a direction is \emph{generic} if it is not a Stokes direction for any pair $c,d\in C$.
\end{lemmadefi}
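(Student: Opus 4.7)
The plan is to reduce the defining condition to a single real-valued trigonometric equation, and then to read off the solutions in polar coordinates. Setting $a \defeq \tfrac{c-d}{2} \in \C^\times$, the defining relation $-\real{\tfrac{c}{2}z^2} = -\real{\tfrac{d}{2}z^2}$ is equivalent to $\real{az^2} = 0$.

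Next I would write $a = |a|\mspace{1mu}e^{i\alpha}$ and $z = re^{i\theta}$, so that $\real{az^2} = |a|\mspace{1mu}r^2\cos(2\theta+\alpha)$. Since $|a|>0$, this vanishes if and only if $r = 0$ or $\cos(2\theta+\alpha)=0$. The latter gives $2\theta + \alpha \equiv \tfrac{\pi}{2} \pmod{\pi}$, i.e.\ the four values
\[
\theta_k \defeq \tfrac{1}{2}\bigl(\tfrac{\pi}{2}-\alpha\bigr) + \tfrac{k\pi}{2}, \qquad k=0,1,2,3,
\]
modulo $2\pi$. Since consecutive $\theta_k$'s differ by $\pi/2$, the associated rays from the origin are pairwise perpendicular.

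Finally, I would assemble the description: the solution set consists of the point $0$ together with the four open half-lines $\{re^{i\theta_k} : r > 0\}$ for $k = 0,1,2,3$. Adding the common limit point $0$, these four half-lines become closed half-lines with initial point $0$, and $\mathrm{St}_{c,d}$ is their union, establishing both claims.

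There is essentially no obstacle here; the statement is a direct polar-coordinate computation. The only thing to be slightly careful about is the passage from the single condition on $\real{az^2}$ back to the four rays (rather than two lines), which is ensured by the factor $2$ in front of $\theta$ combined with the fact that we parametrise directions modulo $2\pi$.
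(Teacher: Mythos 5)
Your proof is correct and is exactly the elementary polar-coordinate computation the paper implicitly relies on (the Lemma-Definition is stated without proof): reducing to $\real{az^2}=0$ with $a=\tfrac{c-d}{2}\neq 0$ and solving $\cos(2\theta+\alpha)=0$ yields the four rays spaced by $\tfrac{\pi}{2}$, consistent with the explicit Stokes directions $-\tfrac{\pi}{4}-\tfrac{1}{2}\arg(d-c)+k\tfrac{\pi}{2}$ quoted later in the paper. No gaps.
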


\begin{defi}\label{defSectorAtInfty}
A subset $S\subset \PP$ is said to be
\begin{itemize}
\item[$\blacktriangleright$] an \emph{open sector} at $\infty$ if
$$S=\{ z\in \C \mid R<|z|<\infty, \arg z \in (\theta-\eps,\theta+\eps) \}\subseteq \C\subset \PP$$
for some $R\in \R_{\geq 0}$, $\eps\in\R_{>0}$ and $\theta\in\R/2\pi\Z$.
\item[$\blacktriangleright$] a \emph{closed sector} at $\infty$ if
$$S=\{ z\in \C \mid R\leq|z|<\infty, \arg z \in [\theta-\eps,\theta+\eps] \text{ for $|z|\neq 0$}\}\subseteq \C\subset \PP$$
for some $R,\eps\in \R_{\geq 0}$ and $\theta\in\R/2\pi\Z$.\\
(For $\eps=0$, this includes the case of half-lines.)
\end{itemize}
The \emph{radius} of such a sector is the number $\frac{1}{R}\in (0,+\infty]$, and its \emph{width} is the number $\min(2\eps,2\pi)\in[0,2\pi]$. Note that a closed sector at $\infty$ is topologically closed in $\C$ (but not in $\PP$).

We will say that an (open or closed) sector at $\infty$ \emph{contains} a direction $\theta\in\R/2\pi\Z$ if its intersection with the open half-line $\{ z\in\C\setminus \{0\} \mid \arg z = \theta \}$ is non-empty.
\end{defi}
\noindent On sectors containing no Stokes direction, we can introduce an order on $C$.
\begin{notation}
Let $S$ be a sector at $\infty$ and let $c,d\in C$. We write
$$c<_S d \quad\defiff \quad \real{\frac{c}{2}z^2} < \real{\frac{d}{2}z^2} \textnormal{ for all $z\in S\setminus \{0\}$}.$$
For $\theta\in \R/2\pi\Z$, we write
$$c<_\theta d \quad \defiff\quad \real{\frac{c}{2}z^2} < \real{\frac{d}{2}z^2} \textnormal{ for all $z\in \C$ with $z\neq 0$ and $\arg z=\theta$}.$$
\end{notation}

We now describe morphisms between the exponential enhanced ind-sheaves in the local decomposition of $\SolE{\PP}(\cM)$ from \cref{GaussBasicProperties} (ii). This lemma is analogous to (and inspired by) \cite[Lemma 9.8.1]{DK16}, \cite[Lemma 5.1.2]{DHMS} and \cite[Lemma 4.3.1]{DK18}.

\begin{lemma}\label{lemmaHomExp}
Consider the meromorphic functions $\varphi_1,\varphi_2$ on $\PP$ given by $\varphi_1(z)=-\frac{c}{2}z^2$ and $\varphi_2(z)=-\frac{d}{2}z^2$ for $c,d\in\C^\times$, $c\neq d$. Let $S\subset \PP$ be a sector (open or closed) at $\infty$. Then we have
\begin{align*}
\Hom{\EbIk{\PP}}\big(\Expi{\real{\varphi_1}}{S}{\PP},\Expi{\real{\varphi_2}}{S}{\PP}\big)&\iso \Hom{\Dbk{\C\times \R}}\big(\Exps{\real{\varphi_1}}{S}{\C},\Exps{\real{\varphi_2}}{S}{\C}\big)\\ 
&\iso \begin{cases} \kk & \textnormal{if $\real{\varphi_1} \geq \real{\varphi_2}$ on $S$} \\ 0 & \textnormal{otherwise}\end{cases}.
\end{align*}
Here, the first isomorphism (from right to left) is induced by the functor $\kE{\PP}\conv \tilde{j}_!(\bullet)$ and the second isomorphism is the natural identification of a morphism with multiplication by a complex number.
\end{lemma}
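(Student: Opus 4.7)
The plan is to split the verification into two steps: a reduction of the enhanced ind-sheaf Hom to an enhanced sheaf Hom on $\C\times\R$, followed by a direct sheaf-theoretic calculation on $S\times\R$.

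For the first isomorphism, I would first observe that $\Exps{\real{\varphi_i}}{S}{\PP} \iso \tilde{j}_!\Exps{\real{\varphi_i}}{S}{\C}$, since both are supported in $S\times\R\subseteq \C\times\R$; hence $\Expi{\real{\varphi_i}}{S}{\PP} \iso \kE{\PP}\conv\tilde{j}_!\Exps{\real{\varphi_i}}{S}{\C}$. The natural morphism from the enhanced sheaf Hom to the enhanced ind-sheaf Hom is then induced by the functor $\kE{\PP}\conv\tilde{j}_!(-)$. To show this is bijective, I would invoke that $\kE{\PP}\conv(-)$ acts as an idempotent ``reflector'' onto a full subcategory of $\EbIk{\PP}$ containing our exponential objects, under which morphisms coincide with those in $\Dbk{\PP\times\R}$; these in turn reduce to morphisms in $\Dbk{\C\times\R}$ by adjunction for the open embedding $\tilde{j}$. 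Analogous arguments appear in \cite[Lemma~4.3.1]{DK18} and \cite[Lemma~5.1.2]{DHMS}.

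For the second isomorphism, set $Z_i\defeq (S\times\R)\cap\{t+\real{\varphi_i}\geq 0\}$, so that $\Exps{\real{\varphi_i}}{S}{\C}\iso \kk_{Z_i}$ as sheaves on $\C\times\R$. By adjunction for the inclusion $S\times\R\hookrightarrow \C\times\R$ (an open or closed embedding depending on whether the sector $S$ is open or closed), the Hom reduces to computing morphisms $\kk_{Z_1}\to \kk_{Z_2}$ inside $\Dbk{S\times\R}$, where both $Z_i$ are closed. Each $Z_i$ is connected, being the epigraph of a continuous real-valued function over the connected base $S$, so $\Gamma(S\times\R,\kk_{Z_i})=\kk$ and any sheaf morphism $\phi\colon\kk_{Z_1}\to\kk_{Z_2}$ is determined by a single scalar $c=\phi(1)\in\kk$. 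If $\real{\varphi_1}\geq\real{\varphi_2}$ on $S$, then $Z_2\subseteq Z_1$ and each scalar $c$ gives a well-defined morphism (the natural ``restriction from $Z_1$ to $Z_2$'' scaled by $c$), so the Hom space is one-dimensional. If instead $\real{\varphi_1(z_0)}<\real{\varphi_2(z_0)}$ for some $z_0\in S$, I would pick $t_0$ with $-\real{\varphi_2(z_0)}\leq t_0 < -\real{\varphi_1(z_0)}$, so $(z_0,t_0)\in Z_2\setminus Z_1$. The stalk of $\phi(1)$ at this point equals the constant value $c$ (as $\phi(1)$ is constant on the connected $Z_2$), but it must also equal the image of the vanishing stalk of $1\in \Gamma(S\times\R, \kk_{Z_1})$ at $(z_0,t_0)$, forcing $c=0$ and hence the Hom group vanishes.

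The main obstacle is the first step: justifying rigorously that the passage to enhanced ind-sheaves via $\kE{\PP}\conv\tilde{j}_!(-)$ neither creates nor destroys morphisms between the exponential objects in question. The second step is elementary once one notes the connectedness of the epigraphs $Z_i$, and it reduces to the observation that any candidate global constant must vanish on points of $Z_2\setminus Z_1$.
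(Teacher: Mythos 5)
Your second step---computing $\Hom{\Dbk{\C\times\R}}\big(\Exps{\real{\varphi_1}}{S}{\C},\Exps{\real{\varphi_2}}{S}{\C}\big)$ by reducing to the connected closed epigraphs $Z_1,Z_2$ and observing that a morphism $\kk_{Z_1}\to\kk_{Z_2}$ is a scalar which must vanish if $Z_2\not\subseteq Z_1$---is correct. The gap is in the first step. The functor $\kE{\PP}\conv\tilde{j}_!(\bullet)$ is \emph{not} fully faithful, and there is no general ``reflector'' identification of Hom-spaces on a subcategory containing these objects. What the references you cite (and the paper) actually use is the colimit formula coming from the ind-object structure of $\kE{\PP}$ (via \cite[Proposition 4.7.9, Lemma 4.4.6 and Corollary 3.2.10]{DK16}):
\begin{align*}
&\Hom{\EbIk{\PP}}\big(\Expi{\real{\varphi_1}}{S}{\PP},\Expi{\real{\varphi_2}}{S}{\PP}\big)\\
&\qquad\iso \filim{a\geq 0}\Hom{\Dbk{\C\times\R}}\big(\pi^{-1}\kk_S\otimes\kk_{\{t+\real{\varphi_1}\geq 0\}},\;\pi^{-1}\kk_S\otimes\kk_{\{t+\real{\varphi_2}\geq a\}}\big),
\end{align*}
a filtered colimit in which the target is translated by $a$ in the $t$-direction. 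Your enhanced-sheaf Hom is only the $a=0$ term, and the term at level $a$ equals $\kk$ as soon as $\real{(\varphi_2-\varphi_1)}\leq a$ holds on all of $S$.

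Consequently, in the case where $\real{\varphi_1}<\real{\varphi_2}$ at some point, killing the $a=0$ term is not enough: one must show that \emph{every} term of the colimit vanishes, i.e.\ that $\real{(\varphi_2-\varphi_1)}=\real{\frac{c-d}{2}z^2}$ is unbounded above on $S$. This is exactly where the hypotheses enter: the difference is a nonzero homogeneous quadratic, so once it is positive at a point of the unbounded sector $S$ it tends to $+\infty$ along the corresponding ray. Your argument uses neither the unboundedness of $S$ nor the specific form of the $\varphi_i$, and the first isomorphism genuinely fails without such input: if the exponents differ by a function that is positive but bounded, \cref{boundedExponent} gives $\Expi{\real{\varphi_1}}{Z}{\PP}\iso\Expi{\real{\varphi_2}}{Z}{\PP}$, so the left-hand Hom is nonzero while the $a=0$ sheaf-level Hom vanishes. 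To repair the proof, replace the ``reflector'' claim by the colimit formula above, note that in the case $\real{\varphi_1}\geq\real{\varphi_2}$ all transition maps of the colimit are isomorphisms (so the colimit equals the $a=0$ term, which your step two computes to be $\kk$), and add the unboundedness observation to conclude vanishing in the remaining case.
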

\begin{proof}
Using \cite[Proposition 4.7.9, Lemma 4.4.6 and Corollary 3.2.10]{DK16}, we get
\begin{align*}
&\Hom{\EbIk{\PP}}\big(\Expi{\real{\varphi_1}}{S}{\PP},\Expi{\real{\varphi_2}}{S}{\PP}\big)\\
&\hspace{1cm}\iso \filim{a\geq 0} \Hom{\Dbk{\C\times\R}}\big(\pi^{-1}\kk_S\otimes \kk_{\{ t+\real{\varphi_1}\geq 0 \}},\pi^{-1}\kk_S\otimes \kk_{\{t+\real{\varphi_2}\geq a\}}\big).
\end{align*}
If $\real{\varphi_1}\geq \real{\varphi_2}$ at each point of $S$, these $\mathrm{Hom}$-spaces are isomorphic to $\kk$ for any $a\in\R_{\geq 0}$ (and hence also their direct limit).
If there are points in $S$ where $\real{\varphi_1}<\real{\varphi_2}$, it is not difficult to see that $\real{(\varphi_2-\varphi_1)}$ is not bounded from above on $S$. It follows that the $\mathrm{Hom}$-space is trivial for any $a\in\R_{\geq 0}$ (and hence also the direct limit).
\end{proof}

The following result shows how automorphisms of the Gaussian model on sectors can be interpreted as block matrices.
\begin{prop}\label{autosAsMatrices}
Let $S\subset \PP$ be a sector at $\infty$ and assume that $S$ is not a half-line whose direction is a Stokes direction for some $c,d\in C$.
If we choose a numbering of the elements of $C$, i.e.\ $C=\{ \pc{1},\ldots, \pc{n} \}$, we have
\begin{align*}
&\Aut{\EbIk{\PP}}\Big( \pi^{-1}\kk_S \otimes  \bigoplus_{c\in C} \big(\Expi{-\real{\frac{c}{2}z^2}}{\C}{\PP}\big)^{r_c}  \Big)\\ &\iso \Aut{\Dbk{\PP\times\R}}\Big( \pi^{-1}\kk_S \otimes  \bigoplus_{c\in C} \big(\Exps{-\real{\frac{c}{2}z^2}}{\C}{\PP}\big)^{r_c}  \Big) \\ &\iso \big\{ A=(A_{jk})_{j,k=1}^n\in\kk^{r\times r}\,\big| \, A_{jk}\in \kk^{r_{\pc{j}}\times r_{\pc{k}}}, A_{jj} \textnormal{ is invertible for any } j,\\
&\hspace{4.4cm}A_{jk}=0 \textnormal{ if  $\real{\frac{\pc{j}}{2}z^2}<\real{\frac{\pc{k}}{2}z^2}$ for some $z\in S$} \big\}.
\end{align*}
In particular, if $\pc{1}<_S\pc{2}<\ldots <_S \pc{n}$, then the right hand side consists precisely of the invertible, lower block-triangular matrices with block sizes given by the numbers $r_{\pc{j}}$.
\end{prop}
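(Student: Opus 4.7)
The plan is to deduce both isomorphisms from \cref{lemmaHomExp} by additivity and then to handle invertibility by an elementary partial-order/block-triangular argument.

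First, I would reduce the $\Hom{}$ of direct sums to a matrix of $\Hom{}$s between single summands. Writing $E_c$ for either $\Exps{-\real{\frac{c}{2}z^2}}{\C}{\PP}$ or $\Expi{-\real{\frac{c}{2}z^2}}{\C}{\PP}$, additivity of $\Hom{}$ in each variable gives
\begin{align*}
&\Hom{}\Big(\pi^{-1}\kk_S\otimes\bigoplus_k E_{\pc{k}}^{r_{\pc{k}}},\,\pi^{-1}\kk_S\otimes\bigoplus_j E_{\pc{j}}^{r_{\pc{j}}}\Big)\\
&\qquad\iso\bigoplus_{j,k}\Hom{}\big(\pi^{-1}\kk_S\otimes E_{\pc{k}},\,\pi^{-1}\kk_S\otimes E_{\pc{j}}\big)\otimes\kk^{r_{\pc{j}}\times r_{\pc{k}}}
\end{align*}
in both $\Dbk{\PP\times\R}$ and $\EbIk{\PP}$. \cref{lemmaHomExp} evaluates each summand: it is canonically $\kk$ if $\real{\frac{\pc{j}}{2}z^2}\geq\real{\frac{\pc{k}}{2}z^2}$ at every $z\in S$, and is zero otherwise; moreover, the same lemma shows that the two categories produce the same answer via $\kE{\PP}\conv\tilde{j}_!(\bullet)$. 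This establishes the first isomorphism of the proposition and identifies endomorphisms on either side with the stated space of block matrices $A=(A_{jk})$, $A_{jk}\in\kk^{r_{\pc{j}}\times r_{\pc{k}}}$, with $A_{jk}=0$ whenever there exists $z\in S$ at which $\real{\frac{\pc{j}}{2}z^2}<\real{\frac{\pc{k}}{2}z^2}$.

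Next I would verify that this identification respects composition and identities. The canonical generator of a nonzero $\Hom{}(E_{\pc{k}},E_{\pc{j}})$ in $\Dbk{\PP\times\R}$ is (up to scalar) the restriction map
\[
\kk_{\{t\geq\real{\frac{\pc{k}}{2}z^2}\}}\To\kk_{\{t\geq\real{\frac{\pc{j}}{2}z^2}\}}
\]
induced by the inclusion of closed subsets of $\PP\times\R$ (after tensoring with $\pi^{-1}\kk_S$). Composites of such restrictions are again restrictions of the same type, so composition of morphisms matches matrix multiplication and the identity morphism corresponds to the identity matrix; this transports to $\EbIk{\PP}$ via $\kE{\PP}\conv\tilde{j}_!(\bullet)$.

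Finally I would characterise the invertible elements. Define a relation on $\{1,\ldots,n\}$ by
\[
j\preceq k\;\defiff\;\real{\tfrac{\pc{j}}{2}z^2}\geq\real{\tfrac{\pc{k}}{2}z^2}\text{ for all }z\in S;
\]
it is reflexive and transitive. The hypothesis on $S$ enforces antisymmetry: a sector of positive width cannot be contained in the $1$-dimensional Stokes locus of any pair $c,d\in C$, and a half-line in a direction that is generic for every pair intersects no such Stokes line. Hence $\preceq$ is a partial order, and the condition ``$A_{jk}=0$ unless $j\preceq k$'' is preserved by multiplication (by transitivity of $\preceq$). Choosing any linear extension of $\preceq$ and relabelling accordingly turns the admissible matrices into block lower-triangular ones, for which invertibility is equivalent to invertibility of all diagonal blocks $A_{jj}$ and the inverse is again admissible. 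The final assertion of the proposition is exactly the case in which the original numbering of $C$ already realises a linear extension of $\preceq$.

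The main obstacle I anticipate is bookkeeping: the partial order $\preceq$ need not agree with the initial labelling of $C$, so one must pass carefully between ``admissible'' matrices and block lower-triangular matrices via a simultaneous permutation of rows and columns. Beyond that, the linear algebra is routine, and all of the enhanced ind-sheaf content is supplied by \cref{lemmaHomExp}.
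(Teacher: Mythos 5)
Your proof is correct and takes the route the paper intends: the proposition is stated there without proof, immediately after \cref{lemmaHomExp}, as a direct consequence of that lemma via additivity of $\Hom{}$ together with the standard fact that a block matrix supported on a partial order is a unit of the corresponding incidence algebra if and only if its diagonal blocks are invertible. Your verification that the hypothesis on $S$ yields antisymmetry of $\preceq$, and that the identification respects composition, supplies exactly the details the paper leaves implicit.
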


\begin{prop}\label{propWideSectors}
Let $\cM$ be of pure Gaussian type $C$. For any (open or closed) sector $S$ at $\infty$ of sufficiently small radius intersecting at most one Stokes line for each pair $c,d\in C$, there is an isomorphism
\begin{equation*}
\pi^{-1}\kk_{S}\otimes\SolE{\PP}(\cM)\iso \pi^{-1}\kk_{S}\otimes \bigoplus\limits_{c\in C}\big(\Expi{-\real{\frac{c}{2}z^2}}{\C}{\PP}\big)^{r_c}.
\end{equation*}
\end{prop}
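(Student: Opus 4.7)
The plan is to cover $S$ by finitely many small open sectors on which \cref{GaussBasicProperties}(ii) already supplies the Gaussian decomposition, and then glue these local isomorphisms inductively using Mayer--Vietoris distinguished triangles together with the matrix description of morphisms and automorphisms provided by \cref{lemmaHomExp} and \cref{autosAsMatrices}. Write $G \defeq \bigoplus_{c\in C}(\Expi{-\real{\frac{c}{2}z^2}}{\C}{\PP})^{r_c}$ for the Gaussian model.

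Concretely, I would pick directions $\theta_1 < \theta_2 < \ldots < \theta_N$ in the angular range of $S$ and, applying \cref{GaussBasicProperties}(ii) at each $\theta_j$, choose small open sectors $T_j$ centred at $\theta_j$ admitting local isomorphisms $\alpha_j\colon \pi^{-1}\kk_{T_j}\otimes\SolE{\PP}(\cM) \iso \pi^{-1}\kk_{T_j}\otimes G$. After spacing out the $\theta_j$'s, only consecutive sectors $T_j, T_{j+1}$ overlap, and shrinking the widths further I would arrange that every overlap $U_j \defeq T_j \cap T_{j+1}$ is disjoint from all Stokes directions; by \cref{autosAsMatrices}, automorphisms of $G$ over $U_j$ are then given by constant invertible block-lower-triangular matrices. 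Writing $V_i \defeq T_1 \cup \ldots \cup T_i$, I would build the desired global isomorphism $\beta_N$ by induction, with $\beta_1 \defeq \alpha_1$. For the inductive step, the transition $g \defeq \alpha_{i+1}\circ\beta_i^{-1}$ over $V_i \cap T_{i+1} = U_i$ is such a constant matrix; I would factor $g = h^{-1}\ell$ with $h$ an automorphism of $G$ over $T_{i+1}$ and $\ell$ one over $V_i$, then replace $\alpha_{i+1}$ by $h\alpha_{i+1}$ and $\beta_i$ by $\ell\beta_i$. These modified decompositions agree on $U_i$, and the Mayer--Vietoris distinguished triangle attached to the closed cover $V_{i+1} = V_i \cup T_{i+1}$ produces $\beta_{i+1}$; uniqueness of the glued morphism is granted by \cite[Proposition 10.1.17]{KS06}, as recalled after \cref{thmRH}.

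The heart of the argument, and what I expect to be the main technical step, is the factorisation $g = h^{-1}\ell$. By \cref{autosAsMatrices}, an entry $A_{jk}$ of an admissible automorphism of $G$ over a sector $S'$ vanishes as soon as the inequality $\real{\frac{\pc{j}}{2}z^2} < \real{\frac{\pc{k}}{2}z^2}$ holds at some $z\in S'$. For a pair $(\pc{j},\pc{k})$ whose Stokes line lies inside $T_{i+1}$, both strict inequalities occur in $T_{i+1}$, so both $A_{jk}$ and $A_{kj}$ vanish there; on $U_i$ and on $V_i$, by contrast, only one of them vanishes, because the hypothesis that $S$ crosses at most one Stokes line per pair places this Stokes line in a single sector of the cover and keeps $V_i$ strictly on one side of it. This asymmetry is exactly what permits a Gaussian-elimination-type factorisation: using the triangular structure granted by \cref{autosAsMatrices}, one peels off block by block the entries of $g$ corresponding to pairs with Stokes line in $T_{i+1}$ into $\ell$ (which is then $V_i$-admissible), leaving an $h^{-1}$ supported in the remaining pairs and hence $T_{i+1}$-admissible. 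The smallness of the radius enters only through \cref{GaussBasicProperties}(ii) at each $\theta_j$; everything else reduces to formal manipulations with distinguished triangles in the tensor category of enhanced ind-sheaves.
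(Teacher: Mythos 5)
Your proposal is correct and follows essentially the same route as the paper: cover $S$ by small sectors supplied by \cref{GaussBasicProperties}(ii), glue inductively via the Mayer--Vietoris triangle, and make the two trivializations agree on the overlap by factoring the transition matrix $g=h^{-1}\ell$ into a $T_{i+1}$-admissible and a $V_i$-admissible automorphism using \cref{autosAsMatrices} --- this is exactly the paper's decomposition $A=A''A'$ with the elementary factor $A'_{l'l}=A_{l'l'}^{-1}A_{l'l}$ carrying the pair whose Stokes direction lies in the new sector. The only cosmetic difference is that the paper arranges each new sector to contain at most one Stokes direction, so the factorisation is a single explicit elementary block rather than your block-by-block Gaussian elimination.
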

\begin{proof}	
Let us write for short $H\defeq \SolE{\PP}(\cM)$ and $\bM\defeq \bigoplus_{c\in C}\big(\Expi{-\real{\frac{c}{2}z^2}}{\C}{\PP}\big)^{r_c}$.

The following argument enables us to recursively obtain the desired isomorphism by gluing those on small sectors (cf.\ \cref{GaussBasicProperties} (ii)):
Assume that we are given two open sectors $S_1,S_2\subset S$ at $\infty$ with isomorphisms
\begin{align*}
\alpha_j\colon \pi^{-1}\kk_{S_j}\otimes H\TO{\sim} \pi^{-1}\kk_{S_j}\otimes \bM
\end{align*}
for $j\in\{1,2\}$ and assume moreover that $S_1\cap S_2\neq \emptyset$, that we have $S_1\nsubseteq S_2$ and $S_2\nsubseteq S_1$, that $S_2$ contains at most one Stokes direction and no Stokes direction for the same pair $c,d$ is contained in $S_1$.
	
Choose a numbering of the elements of $C$ such that $\pc{1}<_{S_1\cap S_2} \pc{2} <_{S_1\cap S_2} \ldots <_{S_1\cap S_2} \pc{n}$. The isomorphisms $\alpha_j$ induce two isomorphisms
\begin{align*}
\widetilde\alpha_j\colon \pi^{-1}\kk_{S_1\cap S_2}\otimes H\TO{\sim} \pi^{-1}\kk_{S_1\cap S_2}\otimes \bM.
\end{align*}
By Proposition \ref{autosAsMatrices}, the transition isomorphism $\widetilde\alpha_2\circ\widetilde\alpha_1^{-1}$ can be represented by a lower block-triangular matrix $A=(A_{jk})$. One can decompose $A=A''A'$ as follows:

If $S_2$ contains a Stokes direction for the pair $\pc{l},\pc{l'}$ ($l<l'$), let $A'$ be the block matrix (with the same block structure as $A$) having identity matrices on the diagonal and $A'_{l'l}=A_{l'l'}^{-1}A_{l'l}$. All the other blocks of $A'$ are zero.
If $S_2$ contains no Stokes direction, let $A'\defeq \1$. Set $A''\defeq AA'^{-1}$.

It is not difficult to see that, in either of the two cases, the matrix $A'$ represents an automorphism of $\pi^{-1}\kk_{S_1}\otimes \bM$ and the matrix $A''$ represents an automorphism of $\pi^{-1}\kk_{S_2}\otimes \bM$ (by the correspondence of Proposition \ref{autosAsMatrices}).
	
Consider the diagram
\begin{center}
\begin{tikzcd}
\pi^{-1}\kk_{S_1\cap S_2}\otimes H \arrow{r} \arrow{d}{A'\circ \widetilde\alpha_1=A''^{-1}\circ \widetilde\alpha_2} &[-5pt] \pi^{-1}\kk_{S_1}\otimes H \oplus \pi^{-1}\kk_{S_2}\otimes H \arrow{r} \arrow{d}{A''^{-1}\circ \alpha_2}[swap]{A'\circ \alpha_1} &[-5pt] \pi^{-1}\kk_{S_1\cup S_2}\otimes H\arrow{r}{+1} \arrow[dashed]{d}{\widehat{\alpha}}&[-5pt]{}\\
\pi^{-1}\kk_{S_1\cap S_2}\otimes \bM \arrow{r} & \pi^{-1}\kk_{S_1}\otimes \bM \oplus \pi^{-1}\kk_{S_2}\otimes \bM \arrow{r} & \pi^{-1}\kk_{S_1\cup S_2}\otimes \bM\arrow{r}{+1}&{}
\end{tikzcd}
\end{center}
where the rows are distinguished triangles.	By our construction of $A'$ and $A''$, the square on the left of the diagram commutes and the vertical arrows are isomorphisms. Therefore, there exists an isomorphism $\widehat{\alpha}$ completing the diagram to an isomorphism of distinguished triangles.
\end{proof}

\section{Stokes multipliers and monodromy}\label{sectionStokesMultipliers}

Let $C\subset\C^\times$ be a finite subset and $\cM\in\ModholD{\PP}$ be of pure Gaussian type $C$. As we have seen, we generally need four sectors to cover a neighbourhood of $\infty$ by sectors on which we have isomorphisms as in \cref{propWideSectors}.

Fix a generic direction $\theta_0$ and choose a numbering of the elements of $C$ such that $\pc{1}<_{\theta_0} \pc{2}<_{\theta_0}\ldots<_{\theta_0} \pc{n}$. Clearly, $\theta_0+k\frac{\pi}{2}$ (for $k\in\{1,2,3\}$) are also generic. By Proposition \ref{propWideSectors}, there exists $R\in\R_{>0}$ such that on the closed sectors $\Sigma_k\defeq\{ z\in\C \mid |z|\geq R, \arg z\in [\theta_0+(k-1)\frac{\pi}{2},\theta_0+k\frac{\pi}{2}]\}$, $k\in\Z/4\Z$, we have isomorphisms
\begin{equation}\label{eq:AutosOnFourSectors}
\alpha_k\colon \pi^{-1}\kk_{\Sigma_k}\otimes\SolE{\PP}(\cM) \TO{\sim} \pi^{-1}\kk_{\Sigma_k}\otimes \bigoplus\limits_{c\in C}\big(\Expi{-\real{\frac{c}{2}z^2}}{\C}{\PP}\big)^{r_c}.
\end{equation}
(Note that these isomorphisms are not unique, so this step involves a choice.)

On the half-line $\Sigma_{k,k+1}\defeq \Sigma_k\cap \Sigma_{k+1}$, $\alpha_k$ and $\alpha_{k+1}$ induce isomorphisms (by abuse of notation, we denote them by the same symbols)
$$\alpha_k,\alpha_{k+1}\colon \pi^{-1}\kk_{\Sigma_{k-1,k}}\otimes\SolE{\PP}(\cM) \TO{\sim} \pi^{-1}\kk_{\Sigma_{k-1,k}}\otimes \bigoplus\limits_{c\in C}\big(\Expi{-\real{\frac{c}{2}z^2}}{\C}{\PP}\big)^{r_c}$$
and the transition isomorphism $$\alpha_{k+1}\circ (\alpha_k)^{-1}\in\Aut{\EbIk{\PP}}\Big(\pitens{\Sigma_{k,k+1}}\Expg\Big)$$
is represented by an invertible, block-triangular matrix $\sigma_k$ (cf.\ \cref{autosAsMatrices}).

\begin{defi}
The matrices $\sigma_k$ are called \emph{Stokes multipliers} (or \emph{Stokes matrices}) of $\cM$.
\end{defi}
\noindent (Remember that these notions require fixing a generic direction.)

\begin{prop}\label{propTrivialMonodromy}
The (counterclockwise) product of the Stokes multipliers for a D-module of pure Gaussian type is the identity, i.e.\ $\sigma_4\sigma_3\sigma_2\sigma_1= \1$.
\end{prop}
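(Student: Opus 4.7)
My strategy is the telescoping identity
\[
\sigma_4\sigma_3\sigma_2\sigma_1 = (\alpha_1\alpha_4^{-1})(\alpha_4\alpha_3^{-1})(\alpha_3\alpha_2^{-1})(\alpha_2\alpha_1^{-1}) = \alpha_1\alpha_1^{-1} = \id,
\]
using the cyclic indexing $\alpha_5=\alpha_1$. The technical obstacle is that the intermediate cancellations $\alpha_k^{-1}\alpha_k$ are not genuine sheaf compositions: $\alpha_k^{-1}$ is defined on $\Sigma_{k-1,k}$ and $\alpha_k$ on $\Sigma_{k,k+1}$, two disjoint half-lines. The plan is to interpret the product as a matrix in $\kk^{r\times r}$ via \cref{autosAsMatrices} and justify the cancellation indirectly.

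The first step is to observe that the desired identity is invariant under any replacement $\alpha_k\mapsto\beta_k\alpha_k$ with $\beta_k\in\Aut(\pi^{-1}\kk_{\Sigma_k}\otimes\bM)$: a direct calculation gives $\sigma_k'=\beta_{k+1}\sigma_k\beta_k^{-1}$ on each overlap, hence (using $\beta_5=\beta_1$)
\[
\sigma_4'\sigma_3'\sigma_2'\sigma_1' = \beta_1(\sigma_4\sigma_3\sigma_2\sigma_1)\beta_1^{-1},
\]
so if the product is $\1$ for one choice of the $\alpha_k$'s, it is $\1$ for every choice. The second step is to exhibit one particular choice for which every $\sigma_k$ equals $\1$. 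This reduces the proposition to the existence of a \emph{global} isomorphism $\pi^{-1}\kk_A\otimes\SolE{\PP}(\cM)\iso\pi^{-1}\kk_A\otimes\bM$ on the full annulus $A=\{|z|\ge R\}\subset\C$: restricting such an isomorphism to each $\Sigma_k$ yields $\alpha_k$'s agreeing on $\Sigma_{k,k+1}$, so that $\sigma_k=\alpha_{k+1}\alpha_k^{-1}=\id$ trivially.

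The main obstacle is producing the global isomorphism on $A$. The key input should be that $\cM$ is a flat connection on the simply connected affine line $\C=\PP\setminus\{\infty\}$, which forces the formal monodromy of $\cM$ at $\infty$ to be trivial. Consequently, each regular factor $\cR_c$ appearing in the Levelt--Turrittin decomposition has trivial monodromy on $A$, and the local system data of $\SolE{\PP}(\cM)$ and $\bM$ on the annulus agree. Combined with \cref{propGlobalEnhSheaf}, which presents $\SolE{\PP}(\cM)$ as $\kE{\PP}\conv\tilde j_!\cF$ for a globally defined enhanced sheaf $\cF$ on $\C\times\R$, the sector-wise isomorphisms furnished by \cref{propWideSectors} can be patched into a single isomorphism on $A$, completing the argument.
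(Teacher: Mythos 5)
Your first step (the conjugation formula $\sigma_k'=\beta_{k+1}\sigma_k\beta_k^{-1}$ and the resulting invariance of $\sigma_4\sigma_3\sigma_2\sigma_1$ up to conjugacy under changes of the trivializations $\alpha_k$) is correct and is indeed the mechanism underlying the paper's proof. The gap is in the second step: you reduce the claim to the existence of a \emph{global} isomorphism $\pi^{-1}\kk_A\otimes\SolE{\PP}(\cM)\iso\pi^{-1}\kk_A\otimes\bM$ on the unbounded annulus $A=\{|z|\geq R\}$, and this statement is false in general. Such a global trivialization relative to the exponential model $\bM$ exists precisely when the Stokes structure splits, i.e.\ when each $\sigma_k$ can be made trivial; by \cref{autosAsMatrices} and the classification of \cref{equivalenceDmodStokesData} this would force every module of pure Gaussian type to be isomorphic to $\bigoplus_c(\ExpD{-\frac{c}{2}z^2}{\C}{\PP})^{r_c}$ near $\infty$, contradicting the existence of nontrivial Stokes data (which is the whole point of the theory, and of Sabbah's results). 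Your justification conflates two different things: the triviality of the \emph{underlying local system} of $\SolE{\PP}(\cM)$ on the annulus (true, since $\cM$ has no singularity on $\C$) with the triviality of the \emph{enhanced} object $\SolE{\PP}(\cM)$ relative to $\bM$. The enhanced structure records the exponential growth data that the sheaf-theoretic shadow forgets, and the sectorial isomorphisms of \cref{propWideSectors} cannot be patched on $A$ exactly because the transition matrices are nontrivial.

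The paper's proof repairs this by working on a \emph{bounded} annulus $D=\{R\leq|z|\leq\rho\}$. There the exponents $-\real{\frac{c}{2}z^2}$ are bounded, so by \cref{boundedExponent} one has a canonical isomorphism $\pitens{D}\bM\iso\pitens{D}(\kE{\PP})^r$; composing the $\alpha_k$ with it turns the sectorial trivializations into gluing data for a rank-$r$ local system $\cG$ on $D$ with monodromy $\sigma_4\sigma_3\sigma_2\sigma_1$, and $\pitens{D}\SolE{\PP}(\cM)\iso\pi^{-1}\cG\otimes\kE{\PP}$. Comparing with the isomorphism $\pitens{D}\SolE{\PP}(\cM)\iso\pi^{-1}(\kk_D)^r\otimes\kE{\PP}$ of \cref{GaussBasicProperties}\,(iii) and using full faithfulness of $\pi^{-1}(\bullet)\otimes\kE{\PP}$, one gets $\cG\iso(\kk_D)^r$, so the monodromy is trivial. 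Note that this is the correct incarnation of your telescoping idea: the global trivialization you need exists only after replacing $\bM$ by $(\kE{\PP})^r$ on a bounded region, where the obstruction coming from the exponential factors disappears and only the ordinary local-system monodromy survives.
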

\begin{proof}
Choose $\rho>R$ and set $B\defeq\{ z\in\C \mid |z|\leq \rho \}$. There is a canonical isomorphism (see \cref{boundedExponent})
$$\tau\colon \pi^{-1}\kk_B\otimes \bigoplus\limits_{c\in C}\big(\Expi{-\real{\frac{c}{2}z^2}}{\C}{\PP}\big)^{r_c}\TO{\sim}\pi^{-1}\kk_B\otimes (\kE{\PP})^r.$$
We set $D_j\defeq \Sigma_j\cap B$, $D_{jk}\defeq D_j\cap D_k$ and $D\defeq \bigcup_{j\in\Z/4\Z} D_j$. Moreover, we write for short $H\defeq \SolE{\PP}(\cM)$ and $\bM\defeq \Expg$.

For each $k\in\Z/4\Z$, one has a chain of isomorphisms
$$\pitens{D_k}H\TO{\tau\circ \alpha_k}\pitens{D_k}(\kE{\PP})^r\iso \pi^{-1}(\kk_{D_k})^r \otimes \kE{\PP}.$$
The transition isomorphism on $D_{k,k+1}$ is given by the Stokes multiplier $\sigma_k$ (which can be viewed as an automorphism of the sheaf $(\kk_{D_k})^r$).

Therefore, $\pitens{D}H\iso \pi^{-1}\cG \otimes \kE{\PP}$, where $\cG$ is a local system of rank $r$ on $D$ (extended by zero to $\C$) with monodromy given by $\sigma_4\sigma_3\sigma_2\sigma_1$.

On the other hand, by Lemma \ref{GaussBasicProperties}, we have an isomorphism $\pitens{D}H\iso \pi^{-1}(\kk_D)^r\otimes \kE{\PP}$. Since the functor $\pi^{-1}(\bullet)\otimes \kE{\PP}$ is fully faithful (see \cite[Proposition 4.7.15]{DK16}), $\cG$ is isomorphic to the constant local system and hence that their monodromies are equal.
\end{proof}

\section{A sheaf describing enhanced solutions}\label{sectionClosedSectorsOfInfiniteRadius}
The question studied in this section is how large we can choose the radius of the four sectors. It will turn out that the absence of singularities outside the point $\infty$ enables us to increase the sectors' radii as far as we like. Hence, we can actually use sectors of infinite radius.

\begin{lemmadefi}\label{defGaussianEnhancedSheaf}
Consider the following set of data:
\begin{itemize}
	\item a finite subset $C\subset \C^\times$,
	\item a generic direction $\theta_0$ with respect to $C$ (which defines an order on $C$),
	\item a family $\ranks=(r_c)_{c\in C}$ of natural numbers $r_c\in \Z_{>0}$,
	\item a family $\sigma=(\sigma_k)_{k\in\Z/4\Z}$ of $(r\times r)$-matrices (where $r\defeq \sum_{c\in C}r_c$) such that $\sigma_1$ and $\sigma_3$ (resp.\ $\sigma_2$ and $\sigma_4$) are upper (resp.\ lower) block-triangular, with the block structure given by the numbers $r_c$ (ordered according to $\theta_0$).
\end{itemize}
Define the sectors $S_k\defeq\{ z\in\C \mid \arg z\in [\theta_0+(k-1)\frac{\pi}{2},\theta_0+k\frac{\pi}{2}] \text{ if $z\neq 0$} \}$, which are closed sectors of infinite radius at $\infty$, but can also be considered as closed sectors (including the vertex) at $0$. As usual, we set $S_{k,k+1}\defeq S_k\cap S_{k+1}$.

Then there exists an enhanced sheaf $\cF_\sigma^{C,\theta_0,\ranks}\in\Modk{\C\times\R}$ (or $\cF_\sigma$ for short) on $\C$ together with isomorphisms $$\alpha_k\colon (\cF_\sigma)_{S_k}\TO{\sim}\pitens{S_k}\displaystyle\expg$$
such that the transition isomorphisms $\alpha_{k+1}\circ \alpha_k^{-1}$, automorphisms of $\pitens{S_{k,k+1}}\expg$, are given by the matrices $\sigma_k$.
Moreover, the sheaf $\cF_\sigma$ thus defined is unique up to unique isomorphism.

If an enhanced sheaf on $\C$ is isomorphic to one of this form, we will call it an \emph{enhanced sheaf of pure Gaussian type}.
\end{lemmadefi}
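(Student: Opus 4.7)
The plan is a closed-cover gluing argument along $\{S_k\}_{k\in\Z/4\Z}$ of $\C$, with local models $\cE_k \defeq \pitens{S_k}\expg \in \Modk{\C \times \R}$. By \cref{autosAsMatrices}, each matrix $\sigma_k$ (whose prescribed block-triangular shape matches the order on $C$ induced by the direction of the half-line $S_{k,k+1}$) promotes to a bona fide automorphism $\widetilde\sigma_k$ of $\pitens{S_{k,k+1}}\expg$; these will serve as the transitions to glue along.

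For existence, I would first perform the gluing over $\C^\times$, where the closed cover $\{S_k\smallsetminus\{0\}\}$ has only pairwise overlaps, so no cocycle condition arises. Concretely, define
$$\cF_\sigma^\circ \defeq \ker\Big(\bigoplus_k \cE_k|_{\C^\times\times\R} \To \bigoplus_k \pitens{S_{k,k+1}\smallsetminus\{0\}}\expg\Big),$$
where the component on $S_{k,k+1}\smallsetminus\{0\}$ sends $(s_j)_j$ to $s_{k+1} - \widetilde\sigma_k(s_k)$; standard sheaf gluing then yields a sheaf on $\C^\times\times\R$ together with canonical isomorphisms $\alpha_k^\circ$ obtained from the projection onto the $k$-th factor. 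To extend to $\C$, the only delicate point is the stalk at $(0,t)$. Each $\cE_k$ has stalk $\kk^r$ at $(0,t)$ for $t\geq 0$ and $0$ otherwise, and the four local trivializations at $(0,t)$ are compatible precisely when $\sigma_4\sigma_3\sigma_2\sigma_1 = \1$. This monodromy-triviality condition is automatic in the Gaussian D-module setting by \cref{propTrivialMonodromy} (and is an implicit part of the abstract data considered here); granting it, $\cF_\sigma^\circ$ extends uniquely to a sheaf $\cF_\sigma \in \Modk{\C\times\R}$ and the $\alpha_k^\circ$ to the required $\alpha_k$.

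Uniqueness up to unique isomorphism: given two candidates $(\cF, \{\alpha_k\})$ and $(\cF', \{\alpha_k'\})$, the local isomorphisms $(\alpha_k')^{-1}\circ\alpha_k \colon (\cF)_{S_k} \iso (\cF')_{S_k}$ agree on every $S_{k,k+1}$ (both induce $\widetilde\sigma_k$ between the two copies of $\pitens{S_{k,k+1}}\expg$), hence glue to a global $\cF \iso \cF'$. Any two such global isomorphisms differ by a global automorphism of $\cF_\sigma$ compatible with all $\alpha_k$; by \cref{lemmaHomExp}, the relevant endomorphism spaces are block matrix algebras in which the only element commuting simultaneously with all four alternately upper/lower block-triangular $\widetilde\sigma_k$ is the identity. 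The main obstacle is precisely the treatment of the single point $z=0$ that lies in all four sectors at once: the cocycle condition there is not visible from the block-triangular shape of the data alone, and its resolution — namely the vanishing-monodromy identity $\sigma_4\sigma_3\sigma_2\sigma_1=\1$ guaranteed by \cref{propTrivialMonodromy} — is the conceptual heart of the well-definedness of $\cF_\sigma$.
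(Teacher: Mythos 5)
The paper states this Lemma-Definition without any proof, so there is no argument of the author's to compare against; your closed-cover gluing is the natural construction and is surely what is intended. Your central observation is also the right one: over $\C^\times$ the cover $\{S_k\}$ has only pairwise (adjacent) overlaps, so the gluing there is unconditional, whereas the origin lies in all four sectors and on all four half-lines $S_{k,k+1}$, each local model has stalk $\kk^r$ at $(0,t)$ for $t\geq 0$, and the four trivializations there force $\sigma_4\sigma_3\sigma_2\sigma_1=\1$. As literally stated the Lemma-Definition omits this hypothesis, but it is necessary for existence; it is imposed everywhere the construction is actually invoked (\cref{defStokesData}, the hypotheses preceding \cref{thmAlignedParametersEnhancedSheaves} and \cref{thmNotAligned}). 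Be aware, though, that \cref{propTrivialMonodromy} cannot be cited here: it concerns Stokes multipliers of a D-module, whereas the present statement is about abstract matrix data, so the condition must simply be added to the data, as you half-acknowledge. Also, ``extends uniquely'' is too loose as written --- extension of a sheaf across a point is not unique in general; the clean packaging is to run the same kernel construction over all of $\C$ (summing over the four overlaps $S_{k,k+1}$, now containing $0$), where the product condition is exactly what makes the projection of the kernel's stalk at $(0,t)$ onto each factor an isomorphism onto $\kk^r$.

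The one step that is actually wrong is the last one. The claim that the identity is the only block matrix commuting simultaneously with all four $\sigma_k$ is false in general: scalar matrices always commute, and for $\sigma_k=\1$ (which satisfies every hypothesis, including $\sigma_4\sigma_3\sigma_2\sigma_1=\1$) every invertible block-diagonal matrix does. Fortunately the claim is not needed. Uniqueness up to unique isomorphism here means uniqueness of the isomorphism intertwining the trivializations $\alpha_k$ and $\alpha_k'$; any such isomorphism must restrict to $(\alpha_k')^{-1}\circ\alpha_k$ on each $S_k$, and a morphism of sheaves is determined by its restrictions to the members of a closed cover, so uniqueness is immediate from the first half of your own paragraph. (If one required only that the transition matrices be preserved, the isomorphism would genuinely fail to be unique, as the example $\sigma_k=\1$ shows --- which is precisely why the commutant argument cannot be repaired and should be dropped.)
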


The following theorem shows that this finally is an enhanced sheaf (on $\C$) describing globally (on $\PP$) the enhanced solutions of $\cM$. (In contrast to the formulation of Lemma \ref{propGlobalEnhSheaf}, we do not write extension by zero.)

\begin{thm}\label{thmBigSectors}
Let $\cM$ be a D-module of pure Gaussian type $C$, $\ranks$ the family of ranks from its Levelt--Turrittin decomposition, $\theta_0$ a generic direction and recall the Stokes multipliers $\sigma=(\sigma_k)_{k\in\Z/4\Z}$ from the previous section. Then there is an isomorphism
$$\SolE{\PP}(\cM)\iso \kE{\PP}\conv \cF^{C,\theta_0,\ranks}_\sigma.$$
\end{thm}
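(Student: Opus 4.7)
Write $H \defeq \SolE{\PP}(\cM)$ and $\bM \defeq \Expg$. The plan is to produce isomorphisms
$\beta_k \colon \pitens{S_k} H \TO{\sim} \pitens{S_k}\bM$, $k\in\Z/4\Z$, whose transitions on the half-lines $S_{k,k+1}$ are exactly the Stokes multipliers $\sigma_k$; the conclusion then follows from Lemma \ref{propGlobalEnhSheaf} together with the uniqueness statement of Lemma-Definition \ref{defGaussianEnhancedSheaf}. The isomorphisms $\alpha_k$ of Section \ref{sectionStokesMultipliers}, provided by Proposition \ref{propWideSectors}, already yield such data on the smaller closed sectors $\Sigma_k \subset S_k$ (those with $|z|\ge R$) and have the prescribed transitions on the truncated overlaps $\Sigma_{k,k+1}$.

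The main technical step is extending $\alpha_k$ from $\Sigma_k$ to all of $S_k$. Fix an open ball $B$ with $\{|z|\le R\}\subset B$ and $\overline{B}\subset\C$. By Lemma \ref{GaussBasicProperties}(iii) we have $\pitens{B}H \iso \pitens{B}(\kE{\PP})^r$, and by Lemma \ref{boundedExponent} applied on the bounded set $B$ we also have $\pitens{B}\bM \iso \pitens{B}(\kE{\PP})^r$; composing these identifications yields an isomorphism $\gamma\colon \pitens{B}H \iso \pitens{B}\bM$. On the connected sector-annulus $\Sigma_k\cap B$, the discrepancy $A_k\defeq \alpha_k\circ\gamma^{-1}$ is an automorphism of $\pitens{\Sigma_k\cap B}\bM \iso \pitens{\Sigma_k\cap B}(\kE{\PP})^r$, hence a constant matrix in $GL_r(\kk)$. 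The same constant matrix defines an automorphism of $\pitens{B}\bM$, so replacing $\gamma$ by $\gamma_k\defeq A_k\circ\gamma$ we obtain an isomorphism on $B$ that agrees with $\alpha_k$ on $\Sigma_k\cap B$. Choosing a closed ball $\overline{B'}\subset B$ with $S_k\subset \Sigma_k\cup\overline{B'}$, the closed Mayer--Vietoris distinguished triangle
\[
\pitens{S_k} K \To \pitens{\Sigma_k} K \oplus \pitens{S_k\cap\overline{B'}} K \To \pitens{\Sigma_k\cap\overline{B'}} K \ToPO
\]
(applied for $K=H$ and $K=\bM$), together with the matching pair $(\alpha_k, \gamma_k|_{S_k\cap\overline{B'}})$ on the intersection term, produces the desired $\beta_k$ via the same morphism-of-triangles argument as in Proposition \ref{propWideSectors}; the triangulated five-lemma ensures that $\beta_k$ is an isomorphism and its uniqueness is guaranteed by the remark following Theorem \ref{thmRH}.

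It remains to verify the transitions. Since $\beta_k|_{\Sigma_k} = \alpha_k$ by construction, we have $\beta_{k+1}\circ\beta_k^{-1}|_{\Sigma_{k,k+1}} = \sigma_k$. The overlap $S_{k,k+1}$ is a degenerate sector (a half-line through the origin) along which the Stokes order on $C$ is constant and coincides with the order on the subset $\Sigma_{k,k+1}$; Lemma \ref{lemmaHomExp} (and the analogous statement for diagonal blocks) then shows that the restriction map from the Hom-space of the Gaussian model on $S_{k,k+1}$ to that on $\Sigma_{k,k+1}$ is an isomorphism, each non-zero block being $\kk$ and mapping by the identity. Thus $\beta_{k+1}\circ\beta_k^{-1}=\sigma_k$ on all of $S_{k,k+1}$, and the data $\bigl((\beta_k),(\sigma_k)\bigr)$ realize $H$ as the gluing prescribed by Lemma-Definition \ref{defGaussianEnhancedSheaf}, yielding $H \iso \kE{\PP}\conv \cF_\sigma^{C,\theta_0,\ranks}$.

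The hard part is the extension step: one must carefully bridge the Stokes-decomposed region $\Sigma_k$ (where the exponential summands are genuinely distinguished) and the bounded region $B$ (where, by Lemma \ref{boundedExponent}, they all collapse to copies of $\kE{\PP}$), and then invoke the closed-cover variant of the Mayer--Vietoris gluing of Proposition \ref{propWideSectors} to assemble the two local isomorphisms into a single $\beta_k$ on $S_k$.
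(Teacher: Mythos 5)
Your argument is correct, and it runs on the same ingredients as the paper's proof --- the trivialization on a bounded ball from \cref{GaussBasicProperties}~(iii), the canonical identification of the exponential summands with $\kE{\PP}$ on bounded sets (\cref{boundedExponent}), the observation that the resulting discrepancies are constant invertible matrices, and closed Mayer--Vietoris gluing --- but you assemble them in the opposite order. You extend each $\alpha_k$ radially inward to the full sector $S_k$ and only then glue the four $\beta_k$ angularly; the paper instead glues angularly inside the annular region $\Sigma=\bigcup_k\Sigma_k$ and inside the ball $B$ separately (the latter is \cref{lemmaIsoOnB}), and then glues these two pieces radially along the annulus $D=\Sigma\cap B$. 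Two points in your write-up deserve more care, though neither is a genuine gap. First, the relation $\sigma_4\sigma_3\sigma_2\sigma_1=\1$ of \cref{propTrivialMonodromy}, which the paper consumes explicitly in \cref{lemmaIsoOnB} (it is what makes the four twists $s_k$ consistent around the circle), is consumed silently in your last sentence: it is exactly what is needed for four local trivializations with transitions $\sigma_k$ to glue to a single global isomorphism. Second, your appeal to the uniqueness clause of \cref{defGaussianEnhancedSheaf} is slightly off-register: that uniqueness concerns objects of $\Modk{\C\times\R}$ with sheaf-level trivializations, whereas your $\beta_k$ are isomorphisms in $\EbIk{\PP}$; descending them to the enhanced sheaf $\cF$ of \cref{propGlobalEnhSheaf} would require a faithfulness property of $\kE{\PP}\conv(\bullet)$ that is nowhere established here. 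It is cleaner to do what the paper does: observe that $\kE{\PP}\conv\cF^{C,\theta_0,\ranks}_\sigma$ carries the same local trivializations with the same transitions and perform the final gluing of the comparison isomorphisms directly in $\EbIk{\PP}$, where uniqueness of the completed morphism of triangles is covered by the references given after \cref{thmRH}.
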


In the proof of this theorem, let us write $\cF\defeq \cF_\sigma^{C,\theta_0,\ranks}$. We make use of the following lemma, which gives an alternative description of $\SolE{\PP}(\cM)$ away from the singularity.
\begin{lemma}\label{lemmaIsoOnB}
Let $B\subset \C$ be a closed ball of finite radius around $0$. Then there is an isomorphism
\begin{equation*}
\pitens{B}\SolE{\PP}(\cM)\iso \pitens{B}\kE{\PP}\conv \cF.
\end{equation*}
\end{lemma}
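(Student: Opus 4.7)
Plan: I plan to identify both sides of the claimed isomorphism, restricted to $B$, with $\pi^{-1}\kk_B\otimes(\kE{\PP})^r$. For the left-hand side this will essentially be \cref{GaussBasicProperties}(iii); for the right-hand side it will follow from the construction of $\cF$ together with \cref{boundedExponent} and the triviality of the global monodromy established in \cref{propTrivialMonodromy}.

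For the left-hand side, I first choose an open ball $B'\subset\C$ with $B\subset B'$ and $\overline{B'}\subset\C$, which is possible since $B$ has finite radius. Applying \cref{GaussBasicProperties}(iii) to $B'$ gives $\pi^{-1}\kk_{B'}\otimes \SolE{\PP}(\cM)\iso \pi^{-1}\kk_{B'}\otimes(\kE{\PP})^r$; applying the functor $\pi^{-1}\kk_B\otimes(\bullet)$ to this isomorphism yields $\pi^{-1}\kk_B\otimes \SolE{\PP}(\cM)\iso \pi^{-1}\kk_B\otimes(\kE{\PP})^r$.

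For the right-hand side, I set $V_k\defeq B\cap S_k$ for $k\in\Z/4\Z$ and $V_{k,k+1}\defeq V_k\cap V_{k+1}$; the $V_k$'s cover $B$ as four pie slices meeting only along the radii $V_{k,k+1}$ and at the origin. From the construction of $\cF$ in \cref{defGaussianEnhancedSheaf}, the isomorphisms $\alpha_k$ give, after applying $\kE{\PP}\conv(\bullet)$ and localizing by $\pi^{-1}\kk_{V_k}\otimes(\bullet)$, isomorphisms
$$\pi^{-1}\kk_{V_k}\otimes \kE{\PP}\conv \cF \iso \pi^{-1}\kk_{V_k}\otimes \Expg,$$
with transitions on $V_{k,k+1}$ given by the Stokes matrices $\sigma_k$. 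Since each $V_k$ is bounded, the functions $-\real{c z^2/2}$ are bounded on $V_k$ for every $c\in C$, so \cref{boundedExponent} furnishes canonical isomorphisms $\pi^{-1}\kk_{V_k}\otimes \Expg\iso \pi^{-1}(\kk_{V_k})^r\otimes \kE{\PP}$ whose restrictions to $V_{k,k+1}$ agree for the two indices $k,k+1$. Through these identifications the transitions become the scalar matrices $\sigma_k$ acting on the constant sheaves $(\kk_{V_k})^r$, and gluing produces an isomorphism $\pi^{-1}\kk_B\otimes \kE{\PP}\conv \cF\iso \pi^{-1}\cG\otimes \kE{\PP}$ for a rank-$r$ local system $\cG$ on $B$ with transition cocycle $(\sigma_k)_{k\in\Z/4\Z}$.

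Finally, the cocycle consistency of $\cG$ at the point $0$ (where all four $V_k$ meet) is precisely the condition $\sigma_4\sigma_3\sigma_2\sigma_1 = \1$, which holds by \cref{propTrivialMonodromy}. This ensures that $\cG$ is a genuine local system on the whole of $B$, and that its monodromy around $0$ is trivial; since $B$ is contractible, $\cG\iso (\kk_B)^r$. Combining the two identifications yields the desired isomorphism. The main obstacle is the bookkeeping of the gluing data in the third paragraph: verifying that \cref{boundedExponent} is compatible on overlaps (so the Stokes matrices $\sigma_k$ transfer unchanged to the local-system picture) and observing that consistency of the resulting cocycle at the origin is exactly the content of \cref{propTrivialMonodromy}.
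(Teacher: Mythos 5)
Your argument is correct and follows essentially the same route as the paper: both proofs cut $B$ into the four pieces $B\cap S_k$, combine the trivialization of $\SolE{\PP}(\cM)$ on $B$ from \cref{GaussBasicProperties}(iii) with the canonical isomorphisms of \cref{boundedExponent} (which is where the finiteness of the radius enters), and use $\sigma_4\sigma_3\sigma_2\sigma_1=\1$ to make the gluing consistent. The only cosmetic difference is that the paper twists the trivialization of $\SolE{\PP}(\cM)$ by the cumulative products $s_k=\sigma_{k-1}\cdots\sigma_1$ so that its transition data matches that of $\cF$ directly, whereas you repackage $\pitens{B}\kE{\PP}\conv\cF$ as $\pi^{-1}\cG\otimes\kE{\PP}$ for a local system $\cG$ and then trivialize $\cG$ --- choosing that trivialization amounts to exactly the same matrices $s_k$.
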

\begin{proof}
We abbreviate $B_k\defeq B\cap S_k$ and $B_{k,k+1}\defeq B\cap S_{k,k+1}$ for $k\in\Z/4\Z$, as well as $H\defeq \SolE{\PP}(\cM)$ and $\sM\defeq\expg$. 

For any $k\in\Z/4\Z$, we choose the following isomorphism:
$$\pitens{B_k}H\TO{\vartheta}\pitens{B_k}(\kE{\PP})^r\TO{s_k}\pitens{B_k}(\kE{\PP})^r\TO{\tau}\pitens{B_k}\sM.$$
Here, $\vartheta$ is the isomorphism from \cref{GaussBasicProperties}, $\tau$ is the canonical isomorphism (see \cref{boundedExponent}), and $s_1=\1$, $s_2=\sigma_1$, $s_3=\sigma_2\sigma_1$, $s_4=\sigma_3\sigma_2\sigma_1$. With this choice, the transition maps are given by the $\sigma_k$. (Note that at this point we use that the monodromy is trivial). Hence, one can construct the desired isomorphism.
\end{proof}

\begin{proof}[Proof of \cref{thmBigSectors}]
Recall the notations $R$ and $\Sigma_k$ from Section~\ref{sectionStokesMultipliers}. We abbreviate $H\defeq \SolE{\PP}(\cM)$ and $\sM\defeq \expg$. Moreover, we choose $\rho>R$ and set $B\defeq \{ z\in \C\mid |z|\leq \rho \}$, $\Sigma\defeq \bigcup_{k\in\Z/4\Z}\Sigma_k$, $D\defeq \Sigma\cap B$ and $D_k\defeq D\cap \Sigma_k$ (similarly, $D_{k,k+1}\defeq D\cap \Sigma_{k,k+1}$).

Firstly, one uses \eqref{eq:AutosOnFourSectors} to obtain an isomorphism
\begin{equation}\label{eq:isoOnSigma}
\pitens{\Sigma}H\iso \pitens{\Sigma}\kE{\PP}\conv \cF.
\end{equation}

Secondly, we determine an isomorphism
\begin{equation}\label{eq:isoOnB}
\pitens{B}H\iso \pitens{B}\kE{\PP}\conv \cF.
\end{equation}
The existence of such an isomorphism was shown in \cref{lemmaIsoOnB}. However, it is neither canonical nor unique, but depends on the choice of a trivialization $\vartheta\colon \pitens{B}H\iso \pitens{B}(\kE{\PP})^r$. We choose $\vartheta$ in such a way that the composition
\begin{equation*}
\vartheta_1\circ\alpha_1^{-1}\colon \pitens{D_1}\kE{\PP}\conv \sM\TO{\sim} \pitens{D_1}(\kE{\PP})^r
\end{equation*}
is the canonical isomorphism. We can then conclude that \eqref{eq:isoOnSigma} and \eqref{eq:isoOnB} agree on $D$ and the theorem follows.
\end{proof}

The next lemma shows that we can ``deform'' the sectors $S_k$ without crossing a Stokes line and describe $\cF^{C,\theta_0,\ranks}_\sigma$ equivalently.
\begin{lemma}\label{lemmaCompatibilitySectors}
Let $\cS_k$, $k\in\Z/4\Z$, be four closed sectors of infinite radius at $\infty$. Assume that $\cS_k$ contains exactly the same Stokes directions as $S_k$. Then \cref{defGaussianEnhancedSheaf} defines the same sheaf $\cF^{C,\theta_0,\ranks}_\sigma$ if we replace $S_k$ by $\cS_k$.
\end{lemma}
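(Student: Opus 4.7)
The plan is to invoke the uniqueness clause of \cref{defGaussianEnhancedSheaf}: it will suffice to exhibit, on the sheaf $\cF \defeq \cF_\sigma^{C,\theta_0,\ranks}$ constructed from $(S_k)_{k\in\Z/4\Z}$, trivializations $\alpha'_k$ on the sectors $\cS_k$ whose transition isomorphisms on $\cS_k \cap \cS_{k+1}$ are again given by the matrices $\sigma_k$. Once this is done, the sheaf built from $(\cS_k, \sigma_k)$ is, by uniqueness, canonically isomorphic to $\cF$.

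Without loss of generality, I may arrange that $\cS_k \subseteq S_{k-1} \cup S_k \cup S_{k+1}$ and that the $\cS_k$ cover a neighbourhood of $\infty$. On $\cS_k$, I would glue $\alpha'_k$ from three pieces: set $\alpha'_k \defeq \alpha_k$ on $\cS_k \cap S_k$, $\alpha'_k \defeq \sigma_{k-1} \circ \alpha_{k-1}$ on $\cS_k \cap S_{k-1}$, and $\alpha'_k \defeq \sigma_k^{-1} \circ \alpha_{k+1}$ on $\cS_k \cap S_{k+1}$. On the boundary half-lines $S_{k-1}\cap S_k$ and $S_k \cap S_{k+1}$, these definitions agree by the very definition of the Stokes matrices, so they glue to a well-defined isomorphism $\alpha'_k\colon (\cF)_{\cS_k} \iso \pitens{\cS_k}\expg$.

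The key validity check is that $\sigma_{k-1}$ genuinely induces an automorphism of the Gaussian model on $\cS_k \cap S_{k-1}$ (and likewise $\sigma_k$ on $\cS_k \cap S_{k+1}$). By \cref{autosAsMatrices}, this amounts to $\sigma_{k-1}$ being block-triangular with respect to the order on $C$ throughout $\cS_k \cap S_{k-1}$. Since by hypothesis $\cS_k$ contains exactly the same Stokes directions as $S_k$, the subsector $\cS_k \cap S_{k-1}$ contains no Stokes direction interior to $S_{k-1}$; hence the order on $C$ is constant on that subsector and coincides with the order along the half-line $S_{k-1} \cap S_k$, where $\sigma_{k-1}$ is block-triangular by construction.

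It remains to check that on $\cS_k \cap \cS_{k+1}$ the transition $\alpha'_{k+1}\circ (\alpha'_k)^{-1}$ equals $\sigma_k$. A point $z$ of this intersection lies in $S_k$ or in $S_{k+1}$: in the former case $\alpha'_k(z) = \alpha_k(z)$ and $\alpha'_{k+1}(z) = \sigma_k \circ \alpha_k(z)$, while in the latter $\alpha'_k(z) = \sigma_k^{-1}\circ \alpha_{k+1}(z)$ and $\alpha'_{k+1}(z) = \alpha_{k+1}(z)$; in both cases the transition is $\sigma_k$. The main conceptual obstacle is the validity check of the preceding paragraph, which is precisely where the hypothesis on the Stokes directions of $\cS_k$ enters essentially; the rest is bookkeeping.
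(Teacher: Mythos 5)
The paper states \cref{lemmaCompatibilitySectors} without proof, so there is nothing to compare your argument against; it is correct, and it is surely the intended argument: transport the trivializations $\alpha_k$ across the Stokes-direction-free overlap subsectors and invoke the uniqueness clause of \cref{defGaussianEnhancedSheaf}. Two small points are worth making explicit. First, the containment $\cS_k\subseteq S_{k-1}\cup S_k\cup S_{k+1}$ is not something you get to ``arrange'': since $\cS_k$ contains the Stokes directions interior to $S_k$ and omits those interior to $S_{k\pm1}$, which lie at angular distance exactly $\frac{\pi}{2}$ from them, the angular width of $\cS_k$ is less than $\pi$ and the containment is automatic whenever $|C|\geq 2$ (for $|C|=1$ the hypothesis on Stokes directions is vacuous, and one must in any case read the lemma as assuming that the $\cS_k$ still form a cyclically ordered closed cover of $\C$, which is also what your gluing needs). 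Second, for \cref{defGaussianEnhancedSheaf} to be applicable with the $\cS_k$ at all, one must know that $\sigma_k$ represents an automorphism of the model on the possibly fattened overlap $\cS_k\cap\cS_{k+1}$; this follows by the same reasoning as your ``key validity check'', since any Stokes direction in $\cS_k\cap\cS_{k+1}$ would have to lie in both $S_k$ and $S_{k+1}$, hence on the generic half-line $S_{k,k+1}$, so that $\cS_k\cap\cS_{k+1}$ contains no Stokes direction and the order on $C$ there coincides with $<_{S_{k,k+1}}$. With these points spelled out, your proof is complete.
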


\section[Stokes data and a Riemann--Hilbert correspondence]{Stokes data and a Riemann--Hilbert correspondence for systems of pure Gaussian type}\label{sectionStokesData}
We have reduced to a small set of data necessary for determining a D-module of pure Gaussian type. We use this data to establish a Riemann--Hilbert correspondence for D-modules of pure Gaussian type. In this section, we will not expand on the proofs of equivalences of categories, which are mainly straightforward.

We fix a finite subset $C\subset \C^\times$ as well as a generic direction $\theta_0$ and consider the sectors $S_k=\{ z\in\C\mid \arg z\in [\theta_0+(k-1)\frac{\pi}{2},\theta_0+k\frac{\pi}{2}] \text{ if $z\neq 0$} \}$. We also fix a positive integer $r_c$ for any $c\in C$.

Let $\ModGaussCr$ be the full subcategory of $\ModholD{\PP}$ consisting of objects of pure Gaussian type $C$ and with a Levelt--Turrittin decomposition satisfying $\mathrm{rk}\, \cR_c=r_c$ for every $c\in C$.

Let $\EbGaussCr$ be the full subcategory of $\EbIk{\PP}$ consisting of objects $H$ satisfying $\pitens{\C}H\iso H$ and admitting isomorphisms 
\begin{equation*}
\pitens{S_k}H\iso \pitens{S_k}\Expg
\end{equation*} 
for $k\in\Z/4\Z$. (Note that these isomorphisms are not part of the data.)

\begin{prop}\label{propModGaussEbGauss}
The functor $\SolE{\PP}$ induces an equivalence between $\ModGaussCr$ and $\EbGaussCr$.
\end{prop}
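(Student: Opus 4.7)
The plan is to check the three usual ingredients of an equivalence, most of which are essentially set up by the earlier results of the article.

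First, I would verify that $\SolE{\PP}(\cM)\in\EbGaussCr$ for every $\cM\in\ModGaussCr$. The support condition $\pitens{\C}\SolE{\PP}(\cM)\iso \SolE{\PP}(\cM)$ is exactly \cref{GaussBasicProperties}(i). For the quadrant isomorphisms, I would invoke \cref{thmBigSectors}, which gives $\SolE{\PP}(\cM)\iso \kE{\PP}\conv \cF^{C,\theta_0,\ranks}_\sigma$, and then combine the defining sector trivialisations of $\cF^{C,\theta_0,\ranks}_\sigma$ from \cref{defGaussianEnhancedSheaf} with $\kE{\PP}\conv(\bullet)$, using the standard compatibility of $\pi^{-1}(\bullet)\otimes$ with convolution to produce the required isomorphisms $\pitens{S_k}\SolE{\PP}(\cM)\iso \pitens{S_k}\Expg$. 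Full faithfulness is then immediate from \cref{thmRH}, since $\ModGaussCr$ embeds as a full subcategory of $\DbholD{\PP}$.

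For essential surjectivity, given $H\in\EbGaussCr$, I would mimic Sections~\ref{sectionStokesMultipliers}--\ref{sectionClosedSectorsOfInfiniteRadius} directly on $H$: on each overlap $S_{k,k+1}$ the two prescribed sector isomorphisms differ by an automorphism which, by \cref{autosAsMatrices}, is represented by a block-triangular matrix $\sigma_k$. Running the gluing of \cref{thmBigSectors} with $H$ in place of $\SolE{\PP}(\cM)$ yields $H\iso \kE{\PP}\conv\cF^{C,\theta_0,\ranks}_\sigma$. The classical Sibuya--Malgrange realisation theorem for meromorphic connections with prescribed formal type and Stokes data (cf.\ \cite{Mal91}, \cite{Sab13}) then produces $\cM\in\ModGaussCr$ whose Stokes multipliers are $\sigma$; by \cref{thmBigSectors}, $\SolE{\PP}(\cM)\iso \kE{\PP}\conv\cF^{C,\theta_0,\ranks}_\sigma\iso H$.

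The delicate point in this last step, and the main obstacle I anticipate, is to verify the trivial-monodromy condition $\sigma_4\sigma_3\sigma_2\sigma_1=\1$ for the multipliers extracted from $H$, which is needed before the realisation theorem can be applied. I would derive it in parallel with \cref{propTrivialMonodromy}, trivialising each exponential summand of $H$ on a ball around $0$ via \cref{boundedExponent} and invoking full faithfulness of $\pi^{-1}(\bullet)\otimes\kE{\PP}$; the additional input required is a ball-trivialisation of $H$ analogous to \cref{GaussBasicProperties}(iii), which in the D-module case followed from regularity of $\cM$ outside $\infty$ and in the enhanced-sheaf case has to be extracted from the compatibility of the four sector trivialisations of $H$ over such a ball.
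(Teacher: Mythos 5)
Your first two steps coincide with the paper's (implicit) argument: membership of $\SolE{\PP}(\cM)$ in $\EbGaussCr$ follows from \cref{GaussBasicProperties}(i) and \cref{thmBigSectors}, and full faithfulness is inherited from \cref{thmRH} because both sides are full subcategories. The divergence is in essential surjectivity. The paper stays entirely inside the enhanced framework: it invokes Mochizuki's characterization of the essential image of $\SolE{X}$ (the curve test, \cite[Lemma 9.8]{Moc16}) together with the comparison of enhanced exponentials \cite[Corollary 5.2.3]{DK18}; an object of $\EbGaussCr$ is by definition sectorially isomorphic to a direct sum of exponential enhanced ind-sheaves, so it passes the test and its preimage is automatically of pure Gaussian type $C$ with ranks $\ranks$. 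You instead extract Stokes matrices $\sigma$ from $H$ and feed them into the classical Malgrange--Sibuya realization theorem. That is a legitimate alternative strategy, but as written it has a gap at the final identification.

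The gap is in the step ``by \cref{thmBigSectors}, $\SolE{\PP}(\cM)\iso \kE{\PP}\conv\cF^{C,\theta_0,\ranks}_\sigma\iso H$''. What \cref{thmBigSectors} actually gives is $\SolE{\PP}(\cM)\iso\kE{\PP}\conv\cF_{\sigma'}$, where $\sigma'$ is the family of Stokes multipliers of $\cM$ \emph{in the sense of \cref{sectionStokesMultipliers}}, i.e.\ defined through the enhanced solution functor and \cref{autosAsMatrices}. The realization theorem, on the other hand, prescribes the \emph{classical} Stokes matrices of $\cM$ (transition matrices between asymptotic liftings of the formal decomposition, acting on flat sections or solutions). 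To conclude $\kE{\PP}\conv\cF_{\sigma'}\iso H$ you must show that $\sigma'$ and $\sigma$ are isomorphic in $\StokesCr$, i.e.\ that the classical and the enhanced Stokes data of a connection agree up to the allowed block-diagonal conjugation. This comparison is true, but it is nowhere established in the paper, it is not a formality, and it is precisely the input the curve-test route is designed to avoid; you also cannot appeal to \cref{equivalenceDmodStokesData}, since that is a corollary of the proposition being proved. On the other hand, the point you single out as the main obstacle --- trivial monodromy for the multipliers extracted from $H$ --- is easier than you fear: the closed sectors $S_k$ of \cref{defGaussianEnhancedSheaf} all contain the vertex $0$, so restricting the four trivializations to $\pitens{\{0\}}H$ makes the product $\sigma_4\sigma_3\sigma_2\sigma_1$ telescope to the identity automorphism of $\pi^{-1}\kk_{\{0\}}^r\otimes\kE{\PP}$, whence $\sigma_4\sigma_3\sigma_2\sigma_1=\1$ by full faithfulness of $\pi^{-1}(\bullet)\otimes\kE{\PP}$; no analogue of \cref{GaussBasicProperties}(iii) is needed.
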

Essential surjectivity follows quickly from the description of the essential image of $\SolE{X}$ by T.\ Mochizuki (see \cite[Lemma 9.8]{Moc16}) and the comparison between enhanced exponentials \cite[Corollary 5.2.3]{DK18} (cf. also \cite[Lemma 5.15]{Moc16}).

The results of the previous sections enable us to describe the objects of $\EbGaussCr$ in terms of linear algebra data. Choose a numbering of the elements of $C$ such that $\pc{1}<_{\theta_0} \pc{2}<_{\theta_0}\ldots<_{\theta_0} \pc{n}$. We will write $r_j$ instead of $r_{\pc{j}}$. 

\begin{defi}\label{defStokesData}
One defines the \emph{category $\StokesCr$ of Stokes data of pure Gaussian type $(C,\theta_0,(r_c)_{c\in C})$} as follows:
\begin{itemize}
\item An object $\sigma=(\sigma_k)_{k\in\Z/4\Z}\in\Ob\StokesCr$ is a family of four block matrices with the properties:
\begin{itemize}
\item The block structure is given by the numbers $r_j$ ($j\in\{1,\ldots,n\}$), i.e.\ the $j$th diagonal block has size $r_j\times r_j$.
\item The matrices $\sigma_1$ and $\sigma_3$ are upper block-triangular and the matrices $\sigma_2$ and $\sigma_4$ are lower block-triangular.
\item The matrix $\sigma_k$ is invertible for any $k\in\Z/4\Z$. (With the above properties, this is equivalent to saying that the blocks along the diagonal are invertible.)
\item The product of the $\sigma_k$ is the identity: $\sigma_4\sigma_3\sigma_2\sigma_1=\1$.
\end{itemize}
\item A morphism $\delta=(\delta_k)_{k\in\Z/4\Z}\in\Hom{\StokesCr}(\sigma,\widetilde{\sigma})$ between two objects $\sigma=(\sigma_k)_{k\in\Z/4\Z}$ and $\widetilde{\sigma}=(\widetilde\sigma_k)_{k\in\Z/4\Z}$ is a family of four block matrices with the properties:
\begin{itemize}
\item The block structure is given by the numbers $r_j$ ($j\in\{1,\ldots,n\}$).
\item The matrix $\delta_k$ is block-diagonal for every $k\in\Z/4\Z$.
\item For any $k\in\Z/4\Z$, one has $\widetilde{\sigma}_k\delta_k=\delta_{k+1}\sigma_k$.
\end{itemize}
Composition of morphisms is given by matrix multiplication.
\end{itemize}
\end{defi}

\begin{rem}
Let us give an explanation of how one could think of objects and morphisms in the category of Stokes data $\StokesCr$. This also gives an idea for making a link with the description of Stokes data in \cite{Sab16}.

An object consists of four matrices which will correspond to the Stokes matrices describing the transition between the four sectors. We can therefore imagine them to be arranged in a ``circle'', i.e.\ a diagram of the form
\begin{center}
\begin{tikzcd}
& \bullet \arrow{dl}[swap]{\sigma_2} &[-10pt] & \bullet \arrow{ll}[swap]{\sigma_1} \\[+10pt]
\bullet\arrow{rr}{\sigma_3} & & \bullet\arrow{ur}[swap]{\sigma_4}
\end{tikzcd}
\end{center}
One can think of the vertices $\bullet$ as vector spaces $\kk^r=\bigoplus_{j=1}^n \kk^{r_j}$ which (by the given grading) have two natural filtrations: The filtration $F_m\kk^r=\bigoplus_{j=1}^m\kk^{r_j}$ is respected by the matrices $\sigma_1$ and $\sigma_3$, whereas the filtration $F'_m\kk^r=\bigoplus_{j=n-m+1}^n \kk^{r_j}$ is respected by the matrices $\sigma_2$ and $\sigma_4$.

A morphism between two such diagrams can then be visualized as
\begin{center}
\begin{tikzcd}
& \bullet \arrow{dl}[swap]{\sigma_2}\arrow[red]{dd}[near end]{\delta_2} &[-10pt] & \bullet \arrow{ll}[swap]{\sigma_1}\arrow[red]{dd}{\delta_1} \\
\bullet\arrow[crossing over]{rr}{\sigma_3}\arrow[red]{dd}{\delta_3} & & \bullet\arrow{ur}[swap]{\sigma_4}\\[+15pt]
& \bullet \arrow{dl}[swap]{\widetilde\sigma_2} &[-10pt] & \bullet \arrow{ll}[swap,near start]{\widetilde\sigma_1} \\
\bullet\arrow{rr}{\widetilde\sigma_3} & & \bullet\arrow{ur}[swap]{\widetilde\sigma_4}\arrow[red, crossing over, <-]{uu}[near end, swap]{\delta_4}
\end{tikzcd}
\end{center}
and the relations required in \cref{defStokesData} amount to saying that this diagram is commutative. The matrices $\delta_k$ respect the grading $\kk^r=\bigoplus_{j=1}^n \kk^{r_j}$, i.e.\ they are compatible with both filtrations considered above.

An intuitive reason why the $\sigma_k$ are block-triangular, while the $\delta_k$ need to be block-diagonal is the following: The matrices $\sigma_k$ are the transition matrices, which means that they describe isomorphisms on the boundaries of the sectors, where one has a well-defined ordering of the parameters $\pc{j}$ (cf.\ \cref{autosAsMatrices}). In contrast, the $\delta_k$ are meant to describe morphisms on the sectors $S_k$, where no pair of parameters has a global well-defined order. Therefore, $\delta_k$ must be compatible with any order of the $\pc{j}$.
\end{rem}

\begin{prop}\label{propStokesDataEnhGauss}
The functor 
$$\StokesCr\to\EbGaussCr,\quad \sigma=(\sigma_k)_{k\in\Z/4\Z}\mapsto \kE{\PP}\conv \cF_{\sigma}$$
is an equivalence of categories, where $\cF_\sigma$ is as in \cref{sectionClosedSectorsOfInfiniteRadius}.
\end{prop}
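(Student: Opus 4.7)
The plan is to establish essential surjectivity and full faithfulness of the functor
$$\Psi\colon \StokesCr \to \EbGaussCr, \quad \sigma\mapsto \kE{\PP}\conv \cF_\sigma.$$

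For essential surjectivity, let $H \in \EbGaussCr$. By \cref{propModGaussEbGauss} there is some $\cM \in \ModGaussCr$ with $H \iso \SolE{\PP}(\cM)$, and applying \cref{thmBigSectors} to $\cM$ yields $H \iso \kE{\PP}\conv \cF_\sigma^{C,\theta_0,\ranks}$ for the family $\sigma = (\sigma_k)_{k\in\Z/4\Z}$ of Stokes multipliers attached to $\cM$. By \cref{autosAsMatrices} these matrices are invertible and block-triangular of the orientation prescribed in \cref{defStokesData}, and by \cref{propTrivialMonodromy} they satisfy $\sigma_4\sigma_3\sigma_2\sigma_1 = \1$; hence $\sigma\in\Ob\StokesCr$ and $\Psi(\sigma) \iso H$.

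For full faithfulness, fix $\sigma,\widetilde\sigma\in\Ob\StokesCr$ and set $H_\sigma\defeq \Psi(\sigma)$, $H_{\widetilde\sigma}\defeq\Psi(\widetilde\sigma)$. The plan is to compute $\Hom{\EbIk{\PP}}(H_\sigma, H_{\widetilde\sigma})$ by a Mayer--Vietoris argument along the cover $\C = \bigcup_{k\in\Z/4\Z} S_k$ (both objects being supported on $\C$). Restricting a morphism $\Phi$ to each $S_k$ and using the local trivializations from \cref{defGaussianEnhancedSheaf} identifies $\Phi_k\defeq \pitens{S_k}\Phi$ with an element of $\Hom{\EbIk{\PP}}(\pitens{S_k}\Expg, \pitens{S_k}\Expg)$. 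Since $\theta_0$ is generic and each $S_k$ is a $90^\circ$-sector, for every pair of distinct $c,d\in C$ the function $z\mapsto \real{\frac{c-d}{2}z^2}$ changes sign strictly inside $S_k$; so by \cref{lemmaHomExp} the off-diagonal blocks of $\Phi_k$ must vanish, and $\Phi_k$ is encoded by a block-diagonal matrix $\delta_k$. On each half-line overlap $S_{k,k+1}$, the two trivializations of $\cF_\sigma$ differ by the Stokes matrix $\sigma_k$ and those of $\cF_{\widetilde\sigma}$ by $\widetilde\sigma_k$; a routine change-of-basis computation then shows that compatibility of $\Phi_k$ and $\Phi_{k+1}$ on $S_{k,k+1}$ is equivalent to $\widetilde\sigma_k\delta_k = \delta_{k+1}\sigma_k$, which is exactly the morphism condition in \cref{defStokesData}. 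Conversely, any family $(\delta_k)$ satisfying these relations yields a compatible family on the cover, which glues to a unique morphism $\Phi$ by the distinguished-triangle argument used in the proof of \cref{propWideSectors} (uniqueness being guaranteed as noted after \cref{thmRH}). This gives the bijection $\Hom{\StokesCr}(\sigma,\widetilde\sigma)\iso \Hom{\EbGaussCr}(H_\sigma,H_{\widetilde\sigma})$.

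The principal obstacle is making the Mayer--Vietoris step rigorous in the enhanced ind-sheaf setting: one must verify that $\Hom$ between objects of the form $\kE{\PP}\conv(-)$ can indeed be computed sector-by-sector, and that convolution with $\kE{\PP}$ interacts properly with the restriction operators $\pitens{S_k}(-)\otimes$. Higher intersection contributions do not arise because the half-lines $S_{k,k+1}$ meet pairwise only at the origin, where both $H_\sigma$ and $H_{\widetilde\sigma}$ become trivial by \cref{boundedExponent}; all remaining bookkeeping reduces to elementary block-matrix manipulations.
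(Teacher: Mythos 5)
The paper itself offers no proof of this proposition to compare against: the opening of Section~\ref{sectionStokesData} explicitly declares that the equivalence proofs in that section are ``mainly straightforward'' and omits them. Your argument is the natural one and is sound in substance. Essential surjectivity via \cref{propModGaussEbGauss}, \cref{thmBigSectors}, \cref{autosAsMatrices} and \cref{propTrivialMonodromy} is correct (one could also argue directly, without passing through D-modules, by reading off the transition matrices of a given $H\in\EbGaussCr$ on the half-lines $S_{k,k+1}$ and extracting the relation $\sigma_4\sigma_3\sigma_2\sigma_1=\1$ from consistency of the four trivializations at the origin, but your route is legitimate since \cref{propModGaussEbGauss} is established independently). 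For full faithfulness, the identification of $\pitens{S_k}\Phi$ with a block-diagonal matrix is exactly right: each $S_k$ is a closed quarter-plane with generic boundary directions, so it contains one Stokes direction of each pair $(c,d)$ in its interior, whence both off-diagonal $\Hom$'s vanish by \cref{lemmaHomExp}; and the change-of-basis computation on $S_{k,k+1}$ does yield $\widetilde\sigma_k\delta_k=\delta_{k+1}\sigma_k$. Two small points of precision. First, the cleanest way to make your Mayer--Vietoris step rigorous is to reduce $\Hom_{\EbIk{\PP}}(\kE{\PP}\conv\cF_\sigma,\kE{\PP}\conv\cF_{\widetilde\sigma})$ to a filtered colimit of sheaf-theoretic $\Hom$'s (as in the proof of \cref{lemmaHomExp}) and then apply the left-exact functor $\Hom(\cF_\sigma,-)$ to the Čech-type resolution of the \emph{target} assembled from the four short exact sequences displayed in Section~\ref{sectionAligned}; left-exactness, not ``triviality at the origin'', is the reason the terms supported on $\{0\}$ and on higher intersections do not contribute to $\mathrm{H}^0$ (note \cref{boundedExponent} makes the exponentials mutually isomorphic near $0$, not zero). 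Second, to conclude an equivalence of categories you should also record that the resulting bijections on $\Hom$-sets are compatible with composition (immediate from the matrix description), so that $\sigma\mapsto\kE{\PP}\conv\cF_\sigma$ together with $\delta\mapsto$ the glued morphism really defines a functor. Neither point is a genuine gap.
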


\begin{cor}\label{equivalenceDmodStokesData}
There is an equivalence of categories $\ModGaussCr \TO{\sim} \StokesCr$.
\end{cor}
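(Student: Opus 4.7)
The corollary should follow formally by composing the equivalences from the two preceding propositions, so my plan is simply to make that composition explicit and to verify that the resulting functor behaves correctly on morphisms.

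First, \cref{propModGaussEbGauss} provides the equivalence $\SolE{\PP}\colon \ModGaussCr\TO{\sim}\EbGaussCr$. Second, \cref{propStokesDataEnhGauss} provides the equivalence $\Phi\colon \StokesCr\TO{\sim}\EbGaussCr$ defined by $\sigma\mapsto \kE{\PP}\conv \cF_\sigma$. Choosing any quasi-inverse $\Phi^{-1}$ (which exists since $\Phi$ is an equivalence), I would take the composition $\Phi^{-1}\circ \SolE{\PP}\colon \ModGaussCr\to \StokesCr$ as the desired equivalence; as a composition of equivalences it is automatically one.

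On objects, this composition sends a D-module $\cM\in\ModGaussCr$ to its tuple of Stokes multipliers $(\sigma_k)_{k\in\Z/4\Z}$ constructed in Section \ref{sectionStokesMultipliers}, and \cref{thmBigSectors} guarantees that $\cF_\sigma$ then recovers $\SolE{\PP}(\cM)$ up to canonical isomorphism. On morphisms, any $\cM\to\widetilde\cM$ passes through $\SolE{\PP}$ to a map of enhanced ind-sheaves whose restriction to each $S_k$, read in the trivializations $\alpha_k$, must be block-diagonal with respect to the grading indexed by $C$: off-diagonal blocks would amount to morphisms between inequivalent exponential summands on a sector where both orderings of a pair $(\pc{j},\pc{k})$ occur, which are forced to vanish by \cref{lemmaHomExp}. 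Compatibility with the transition matrices then yields exactly the intertwining relation $\widetilde\sigma_k\delta_k=\delta_{k+1}\sigma_k$ of \cref{defStokesData}.

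I do not anticipate any genuine obstacle here: the substantive work — full faithfulness of $\SolE{\PP}$, essential surjectivity via Mochizuki's characterization of the image of $\SolE{\PP}$, and the reconstruction encoded in \cref{thmBigSectors,lemmaCompatibilitySectors} — has already been absorbed into the two propositions cited above. The corollary is their clean combinatorial consequence, and writing it out amounts to no more than naming the composition and recording its effect on objects and morphisms as above.
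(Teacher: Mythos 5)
Your proposal is correct and coincides with the paper's own (essentially implicit) argument: the corollary is obtained by composing the equivalence of \cref{propModGaussEbGauss} with a quasi-inverse of that of \cref{propStokesDataEnhGauss}, and the paper likewise only records that the resulting functor sends $\cM$ to its Stokes matrices with respect to $\theta_0$. Your additional remarks on the effect on morphisms (block-diagonality of the $\delta_k$ via \cref{lemmaHomExp} and the intertwining relation) match the discussion in the remark following \cref{defStokesData}.
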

The corresponding functor assigns to a D-module of pure Gaussian type $\cM$ its Stokes matrices with respect to the generic direction $\theta_0$.

\section{Analytic and topological Fourier--Laplace transform}\label{sectionFourier}
Classically, for a module $M$ over the Weyl algebra $\C[z]\langle \partial_z\rangle$, the Fourier--Laplace transform $\widehat{M}$ is the $\C[w]\langle \partial_w\rangle$-module defined as follows: As a set, we have $\widehat{M}=M$, and the structure of a $\C[w]\langle \partial_w\rangle$-module is defined by $w\cdot m\defeq \partial_z m$ and $\partial_w m\defeq -z\cdot m$. The corresponding integral transform is given as follows (see \cite{KL}).

Consider the projections
\begin{center}
\begin{tikzcd}
&\PP_z\times\PP_w\arrow{ld}[swap]{p_z}\arrow{rd}{p_w}\\
\PP_z & & \PP_w
\end{tikzcd}
\end{center}
where $\PP_z$ denotes the complex projective line with affine coordinate $z$ in the chart $\C_z\subset \PP_z$ at $0$, and similarly for $\PP_w$.

\begin{defi}
Let $\cM\in\DbD{\PP_z}$. We define the \emph{Fourier--Laplace transform} $\dF{\cM}$ of $\cM$ by
$$\dF{\cM}\defeq \DD {p_w}_*\big(\ExpD{-zw}{\C_z\times\C_w}{\PP_z\times\PP_w}\tensD \DD p_z^*\cM\big)\in \DbD{\PP_w}.$$
This defines a functor $\dF{(\bullet)}\colon \DbD{\PP_z}\to\DbD{\PP_w}$.
\end{defi}

In the same spirit, one can define a transform for enhanced ind-sheaves (see \cite{KS16b}) and enhanced sheaves. Consider the projections
\begin{center}
\begin{tikzcd}
&\C_z\times\C_w\times\R\arrow{ld}[swap]{\widetilde{p}}\arrow{rd}{\widetilde{q}}\\
\C_z\times\R & & \C_w\times\R
\end{tikzcd}
\end{center}
\begin{defi}
Let $\cF\in\Dbk{\C_z\times\R}$ be an enhanced sheaf. We define its \emph{enhanced Fourier--Sato transform} $\sF{\cF}$ by
$$\sF{\cF}\defeq \RR \qtild_!\big(\Exps{-\real{zw}}{\C_z\times\C_w}{\C_z\times\C_w}\convs \ptild^{\mspace{2mu}-1}\cF\big)[1]\in\Dbk{\C_w\times\R}.$$
This defines a functor $\sF{(\bullet)}\colon \Dbk{\C_z\times\R}\to \Dbk{\C_w\times\R}$.
\end{defi}

An important observation on our way to describing the Fourier--Laplace transform of a D-module of pure Gaussian type is the compatibility of these transformations with the enhanced solution functor (cf.\ \cite[Theorem 4.17]{KS16b}).

\begin{lemma}\label{lemmaCompatibilityFourierSol}
Let $\cM\in \ModholD{\PP_z}$ and $\SolE{\PP_z}(\cM)\iso \kE{\PP_z}\conv \cF$ for some $\cF\in \Modk{\C_z\times\R}$. One has an isomorphism in $\EbIk{\PP_w}$
$$\SolE{\PP_w}(\dF{\cM})\iso \kE{\PP_w} \conv \sF{\cF}.$$
\end{lemma}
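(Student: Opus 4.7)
The strategy is to relate the analytic Fourier--Laplace transform on the D-module side to the enhanced Fourier--Sato transform on the ind-sheaf side via $\SolE{}$, and then to reduce the latter to the sheaf-theoretic transform $\sF{}$ by exploiting the particular form $\kE{\PP_z}\conv \cF$ of the enhanced solutions. First, I would invoke the compatibility between $\SolE{}$ and integral-kernel transforms with holomorphic exponential kernel, as established in \cite[Theorem 4.17]{KS16b}: if $\eF{(\bullet)}\colon \EbIk{\PP_z}\to\EbIk{\PP_w}$ denotes the enhanced Fourier--Sato transform of ind-sheaves, defined by
$$\eF{K}\defeq \mathrm{E}{p_w}_{!!}\big(\Expi{-\real{zw}}{\C_z\times\C_w}{\PP_z\times\PP_w}\conv \mathrm{E}p_z^{-1}K\big),$$
then the commutation of $\SolE{}$ with proper direct image and with tensoring by an exponential D-module produces a natural isomorphism $\SolE{\PP_w}(\dF{\cM})\iso \eF{\SolE{\PP_z}(\cM)}$. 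Hence it is enough to prove, for any $\cF\in\Modk{\C_z\times\R}$, that
$$\eF{\big(\kE{\PP_z}\conv \cF\big)}\iso \kE{\PP_w}\conv \sF{\cF}.$$

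Next I would unwind both sides and match them via standard manipulations of the convolution product. Using the identity $\Expi{-\real{zw}}{\C_z\times\C_w}{\PP_z\times\PP_w}\iso \kE{\PP_z\times\PP_w}\conv \Exps{-\real{zw}}{\C_z\times\C_w}{\PP_z\times\PP_w}$ by definition, the projection formula $\mathrm{E}p_z^{-1}(\kE{\PP_z}\conv \cF)\iso \kE{\PP_z\times\PP_w}\conv p_z^{-1}\cF$, together with associativity and commutativity of $\conv$ and the compatibility of $\mathrm{E}{p_w}_{!!}$ with convolution by a constant enhanced ind-sheaf (which absorbs two copies of $\kE{\PP_z\times\PP_w}$ into $\kE{\PP_w}$ after applying $\mathrm{E}{p_w}_{!!}$, cf.\ \cite[Proposition 4.3.10]{DK16}), one rewrites
\begin{align*}
\eF{(\kE{\PP_z}\conv \cF)} &\iso \mathrm{E}{p_w}_{!!}\big(\kE{\PP_z\times\PP_w}\conv \Exps{-\real{zw}}{\C_z\times\C_w}{\PP_z\times\PP_w}\conv p_z^{-1}\cF\big)\\
&\iso \kE{\PP_w}\conv \mathrm{E}{p_w}_{!!}\big(\Exps{-\real{zw}}{\C_z\times\C_w}{\PP_z\times\PP_w}\convs p_z^{-1}\cF\big).
\end{align*}
Since $\cF$ is supported in $\C_z\times\R$, proper base change along the closed embedding $\C_z\hookrightarrow \PP_z$ (extending by zero across $\infty$ via $\tilde{j}_!$) reduces the direct image along $p_w$ to the direct image along $\qtild$ in the definition of $\sF{}$, the extra $\R$-fiber accounting for the shift $[1]$. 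This identifies the inner expression with $\sF{\cF}$ and yields the desired isomorphism.

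The main obstacle will be the careful verification of the projection formula and base-change steps in the enhanced ind-sheaf formalism, in particular the commutation of $\mathrm{E}{p_w}_{!!}$ with $\kE{\PP_w}\conv(\bullet)$ and the reconciliation of the shift $[1]$ in the sheaf-theoretic definition of $\sF{}$ with the absence of such a shift in the ind-sheaf definition of $\eF{}$. Both issues are resolved by using that the extra $\R$-direction in the enhancement is what is being integrated out, and that the support of $\cF$ in $\C_z\times\R$ (extended by zero through $\tilde{j}_!$) guarantees the necessary properness. Once these identifications are made, composing with the compatibility of Step~1 concludes the proof.
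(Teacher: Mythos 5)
Your proposal is correct and takes essentially the same route as the paper, which states this lemma without writing a proof and simply refers to \cite[Theorem 4.17]{KS16b} --- that citation is precisely your first step, the compatibility $\SolE{\PP_w}(\dF{\cM})\iso \eF{\SolE{\PP_z}(\cM)}$, and the remaining identification $\eF{(\kE{\PP_z}\conv \tilde{j}_!\cF)}\iso \kE{\PP_w}\conv\sF{\cF}$ is the routine unwinding you describe. Two small points of hygiene: $\C_z\hookrightarrow\PP_z$ is an \emph{open} (not closed) embedding, so the reduction from ${p_w}$ to $\qtild$ uses the composition of $!$-pushforwards with $\tilde{j}_!$ rather than base change, and the shift $[1]$ in the definition of $\sF{}$ is exactly the one built into the shift conventions of the cited theorem (coming from the drop in dimension under ${p_w}_*$), so it requires no separate argument.
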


\section{Aligned parameters}\label{sectionAligned}
In \cite{Sab16}, C.\ Sabbah treated the case of a D-module of pure Gaussian type $C$ with $\arg c=\arg d$ for any $c,d\in C$, i.e.\ the parameter set $C$ is ``aligned'' along a half-line through the origin.

\subsection{Main statement}\label{MainStatementAligned}
Let $\cM\in\DbholD{\PP}$ be a D-module of pure Gaussian type $C$, where all the elements of $C$ have the same argument $\arg C$.

The directions of the Stokes lines are: $-\frac{\pi}{4}-\frac{1}{2}\arg C+k\frac{\pi}{2}$, $k\in\Z/4\Z$.
(These values are the same for any pair $c,d\in C$.) In particular, we can choose $\theta_0\defeq -\frac{1}{2}\arg C$ as a generic direction. Note that this involves a choice of $\frac{1}{2}\arg C$, and we choose $\theta_0\in [-\frac{\pi}{2},\frac{\pi}{2})$.

It is known from \cref{thmBigSectors} that
$$\SolE{\PP}(\cM)\iso \kE{\PP}\conv \cF_\sigma^{C,\theta_0,\ranks}.$$
Therefore, in view of \cref{lemmaCompatibilityFourierSol}, the main step in computing the Fourier--Laplace transform of $\cM$ topologically is the proof of the following statement. Let $C$ and $\theta_0$ be as above. Let $r_c\in\Z_{>0}$ be a positive integer for any $c\in C$, and let $\sigma=(\sigma_k)_{k\in\Z/4\Z}$ be a family of four block matrices (with block structure induced be the numbering on $C$ with respect to $\theta_0$) such that $\sigma_k$ is upper (resp.\ lower) block-triangular for $k$ odd (resp.\ even) and $\sigma_4\sigma_3\sigma_2\sigma_1=\1$.
\begin{thm}\label{thmAlignedParametersEnhancedSheaves}
Let $C$, $\theta_0$, $\ranks$ and $\sigma$ be as above. We set $\widehat{C}\defeq -1/C= \{ -\frac{1}{c}\mid c\in C \}$, $\widehat{\theta}_0\defeq \pi-\theta_0$ and $\widehat{\ranks}\defeq (r_{-\frac{1}{\widehat{c}}})_{\widehat{c}\in\widehat{C}}$. Then there is an isomorphism
$$\sF{\cF_\sigma^{C,\theta_0,\ranks}}\iso \cF_\sigma^{\widehat{C},\widehat{\theta}_0,\widehat{\ranks}}.$$
In particular, the gluing matrices $\sigma=(\sigma_k)_{k\in\Z/4\Z}$ remain the same (although sectors and exponential factors change).
\end{thm}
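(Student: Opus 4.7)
The strategy is to reduce the computation of $\sF{\cF_\sigma^{C,\theta_0,\ranks}}$ to a sector-by-sector calculation, carry out each piece by completion of squares, and verify that the gluing data survives intact. First, by \cref{defGaussianEnhancedSheaf} the sheaf $\cF_\sigma^{C,\theta_0,\ranks}$ is obtained by gluing the local models $\pitens{S_k}\bigoplus_{c\in C}(\Exps{-\real{\frac{c}{2}z^2}}{\C}{\C})^{r_c}$ along the matrices $\sigma_k$ on the half-lines $S_{k,k+1}$. Writing this gluing as an iterated Mayer--Vietoris distinguished triangle and applying the triangulated functor $\sF{(\bullet)}$, the problem reduces to (i) computing $\sF{(\Exps{-\real{\frac{c}{2}z^2}}{S_k}{\C})}$ canonically for each $c\in C$ and $k\in\Z/4\Z$, and then (ii) transporting the block matrices $\sigma_k$ through the identifications obtained in (i).

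For (i), unwinding the definition and using that $\convs$ adds exponents in the $t$-variable, the transform equals
\begin{equation*}
\RR\qtild_!\,\Exps{-\real{\frac{c}{2}z^2+zw}}{S_k\times \C_w}{\C_z\times\C_w}[1].
\end{equation*}
The change of variable $z=z'-w/c$ (a fibrewise diffeomorphism over $\C_w$) rewrites the exponent as $-\real{\frac{c}{2}(z')^2}-\real{\frac{\widehat c}{2}w^2}$, with $\widehat c=-1/c$, so a projection-formula manipulation gives a canonical isomorphism with
\begin{equation*}
\Exps{-\real{\frac{\widehat c}{2}w^2}}{\C_w}{\C_w}\convs \RR\qtild_!\,\Exps{-\real{\frac{c}{2}(z')^2}}{(S_k+w/c)\times \C_w}{\C_{z'}\times \C_w}[1].
\end{equation*}
The shifted domain $S_k+w/c$ differs from $S_k$ only by a compact correction, so a deformation-retract argument (using that in the aligned case the asymptotic Stokes geometry of $\real{\frac{c}{2}(z')^2}$ in $z'$ is independent of $w$ at infinity) identifies the second factor with the constant sheaf on a suitable closed sector $\widehat S_k\subset \C_w$, placed in one degree. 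Combined with the $w$-dependent exponential factor, this yields a canonical identification $\sF{(\Exps{-\real{\frac{c}{2}z^2}}{S_k}{\C})}\iso \Exps{-\real{\frac{\widehat c}{2}w^2}}{\widehat S_k}{\C_w}$.

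For (ii), the transition morphisms between the local models are the block matrices $\sigma_k$, which act only on the multiplicity spaces $\kk^{r_c}$ and are insensitive to the exponential labels. Thus, after applying the functor $\sF{(\bullet)}$ and using the canonical identifications from (i), the transformed object is glued on the sectors $\widehat S_k$ from the pieces $\bigoplus_{\widehat c\in \widehat C}(\Exps{-\real{\frac{\widehat c}{2}w^2}}{\C}{\C})^{r_{-1/\widehat c}}$ by exactly the same matrices $\sigma_k$. An application of \cref{lemmaCompatibilitySectors}, together with the verification that $\widehat S_k$ contains exactly the Stokes directions corresponding to the generic direction $\widehat\theta_0=\pi-\theta_0$, identifies the resulting sheaf with $\cF_\sigma^{\widehat C,\widehat\theta_0,\widehat\ranks}$.

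The main obstacle is the deformation step inside (i): one must show that the pushforward $\RR\qtild_!$ of the shifted sector piece is concentrated in one degree and reproduces the constant sheaf on the correct sector $\widehat S_k$, without stray contributions from the saddle at $z'=0$ or from the compact region where $S_k+w/c$ deviates from $S_k$. Establishing this cleanly is what requires the aligned-parameter assumption: since $\arg c=\arg d$ for all $c,d\in C$, the sectors $\widehat S_k$ attached to different $c$ coincide, so the direct sum over $c\in C$ on the source side descends to the expected Gaussian-type direct sum over $\widehat c\in\widehat C$ on the target side, and the block structure of the $\sigma_k$ is preserved verbatim.
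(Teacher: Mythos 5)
Your overall architecture (trivialize on four closed sectors, apply $\sF{(\bullet)}$ to the Mayer--Vietoris gluing, compute the transform of each exponential piece by completing the square, then transport the matrices $\sigma_k$) matches the paper's, but the pivotal step (i) contains a genuine error: the enhanced Fourier--Sato transform of a single closed-sector piece is \emph{not} $\Exps{-\real{\frac{\widehat c}{2}w^2}}{\widehat{\cS}_k}{\C_w}$. The correct answer (\cref{FourierOnSectors}) is, for $k=1$, the interval-type object $\ExpS{\phirp{c}}{\phirm{c}}{\widehat{\cH}_-}{\C_w}[1]$: it sits in degree $-1$, it is supported on a closed \emph{half-plane} $\widehat{\cH}_-$ rather than on a quarter-plane sector, and it has the form $\pi^{-1}\kk_{\widehat{\cH}_-}\otimes\kk_{\{-\varphi^+\leq t<-\varphi^-\}}$ with two distinct, piecewise-defined bounding functions, neither of which equals $\real{\frac{1}{2c}w^2}$ globally. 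Your ``deformation-retract'' argument cannot repair this, because the obstruction is genuinely geometric: after completing the square, the stalk at $(\wf,\tf)$ is the compactly supported cohomology of the intersection of a hyperbolic region with the translated sector $\cS_k+w/c$, and whether that intersection has a compact component depends on the position of the translated vertex relative to the two branches of the hyperbola --- this is exactly what produces the case distinctions in $\phirp{c},\phirm{c}$ and the support condition $\imc\rew-\rec\imw\geq 0$. Put differently, $\sF{}$ involves a proper pushforward over all of $\C_z$ and does not commute with the restrictions $(\bullet)_{\cS_k}$, so $\sF{(\cF_{\cS_k})}$ cannot be expected to equal $(\sF{\cF})_{\widehat{\cS}_k}$; indeed the four supports $\widehat{\cH}_\pm$ cover $\C_w$ twice, not once.

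Because of this, step (ii) also collapses as stated: the transformed Mayer--Vietoris diagram is not a gluing of sector-supported constant-type pieces along half-lines, and the decomposition $(\sF{\cF})_{\widehat{\cS}_k}\iso\expgwon{\widehat{\cS}_k}$ only emerges \emph{after} assembling all the pieces. One must compute $\sF{(\cF_{\cH_\pm})}$ and $\sF{(\cF_L)}$ as explicit kernels via the long exact sequences (\cref{FourierOnHL}), restrict to $\widehat{\cS}_k$, observe cancellations such as $\ker(\1-\sigma_3)_{\widehat{\cS}_1}\iso 0$, and chase the diagram \eqref{eq:diagramOnS1} to identify the cokernel (\cref{propFourierTrivializationOnSectors}). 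Finally, that the new transition matrices are again $\sigma_k$ is not automatic from the remark that ``$\sigma_k$ acts on multiplicity spaces'': the $\sigma_k$ have off-diagonal blocks mixing different exponentials, and the trivializations $\widehat{\alpha}_k$ are constructed from the kernel presentations, so a separate diagram chase (\cref{propStokesMatricesAfterFourier}) is needed to see that $\widehat{\alpha}_2\circ\widehat{\alpha}_1^{-1}$ is represented by $\sigma_1$ rather than by some conjugate of it. In short, the skeleton is right, but the load-bearing computation is missing, and the intermediate objects you posit are not the ones that actually occur.
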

As a corollary, we obtain the following result, which was already obtained in the context of Stokes data attached to Stokes-filtered local systems by C.\ Sabbah (cf.\ \cite[Lemma 1.4, Theorem 4.2]{Sab16}). The statement is illustrated in \cref{figureMainResultAligned}.

\begin{figure}[ht]
\centering
\begin{picture}(320,150)
\put(25,15){
\begin{tikzpicture}[scale=0.5]
\draw [teal, ultra thick] (0,0) -- (4,-0.94427191); 
\draw [teal] (0,0) -- (-0.94427191,-4);
\draw [teal] (0,0) -- (-4,0.94427191);
\draw [teal] (0,0) -- (0.94427191,4);
\path [fill=olive, fill opacity=0.25] (0,0) to (4,-0.94427191) to (4,-4) -- (-0.94427191,-4) -- (0,0);
\path [fill=green, fill opacity=0.35] (0,0) to (-0.94427191,-4) to (-4,-4) -- (-4,0.94427191) -- (0,0);
\path [fill=orange, fill opacity=0.35] (0,0) to (-4,0.94427191) to (-4,4) -- (0.94427191,4) -- (0,0);
\path [fill=yellow, fill opacity=0.35] (0,0) to (0.94427191,4) to (4,4) -- (4,-0.94427191) -- (0,0);
\draw [->, thick, red, dotted] (0,0) -- (-0.3*4,-0.3*2.472135955); 
\draw [->, thick, red, dotted] (0,0) -- (-0.3*2.472135955,0.3*4);
\draw [->, thick, red,dotted] (0,0) -- (0.3*4,0.3*2.472135955);
\draw [->, thick, red, dotted] (0,0) -- (0.3*2.472135955,-0.3*4);
\node [gray] (A) at (3,3) {$S_1$};
\node [gray] (B) at (-3,3) {$S_2$};
\node [gray] (C) at (-3,-3) {$S_3$};
\node [gray] (D) at (3,-3) {$S_4$};

\draw [<-,thick,dashed] (-0.65*3.69074767,-0.65*0.2198235) to [out=120,in=210] (-0.65*3.20279701,0.65*1.84716865);
\draw [<-,thick,dashed] (-0.65*0.2198235,0.65*3.69074767) to [out=30,in=130] (0.65*1.84716865,0.65*3.20279701);
\draw [<-,thick,dashed] (0.65*3.69074767,0.65*0.2198235) to [out=-60,in=40] (0.65*3.20279701,-0.65*1.84716865);
\draw [<-,thick,dashed] (0.65*0.2198235,-0.65*3.69074767) to [out=-160,in=-50] (-0.65*1.84716865,-0.65*3.20279701);
\node (E) at (3,-0.4) {$\sigma_4$};
\node (F) at (0.25,2.9) {$\sigma_1$};
\node (G) at (-3,0.3) {$\sigma_2$};
\node (H) at (-0.25,-2.9) {$\sigma_3$};

\node [teal, thick] (I) at (3.6,-1.3) {$\theta_0$};
\end{tikzpicture}
}
\put(50,0){
\centering
\small Original system
}
\put(165,15){
\begin{tikzpicture}[scale=0.5]
\draw [teal, ultra thick] (0,0) -- (-4,-0.94427191); 
\draw [teal] (0,0) -- (0.94427191,-4);
\draw [teal] (0,0) -- (4,0.94427191);
\draw [teal] (0,0) -- (-0.94427191,4);
\path [fill=yellow, fill opacity=0.35] (0,0) to (-4,-0.94427191) to (-4,-4) -- (0.94427191,-4) -- (0,0);
\path [fill=orange, fill opacity=0.35] (0,0) to (0.94427191,-4) to (4,-4) -- (4,0.94427191) -- (0,0);
\path [fill=green, fill opacity=0.35] (0,0) to (4,0.94427191) to (4,4) -- (-0.94427191,4) -- (0,0);
\path [fill=olive, fill opacity=0.25] (0,0) to (-0.94427191,4) to (-4,4) -- (-4,-0.94427191) -- (0,0);
\draw [->, thick, red, dotted] (0,0) -- (0.3*4,-0.3*2.472135955); 
\draw [->, thick, red, dotted] (0,0) -- (0.3*2.472135955,0.3*4);
\draw [->, thick, red,dotted] (0,0) -- (-0.3*4,0.3*2.472135955);
\draw [->, thick, red, dotted] (0,0) -- (-0.3*2.472135955,-0.3*4);
\node [gray] (A) at (-3,3) {$\widehat{S_4}$};
\node [gray] (B) at (3,3) {$\widehat{S_3}$};
\node [gray] (C) at (3,-3) {$\widehat{S_2}$};
\node [gray] (D) at (-3,-3) {$\widehat{S_1}$};

\draw [->,thick,dashed] (0.65*3.69074767,-0.65*0.2198235) to [out=60,in=-40] (0.65*3.20279701,0.65*1.84716865);
\draw [->,thick,dashed] (0.65*0.2198235,0.65*3.69074767) to [out=150,in=50] (-0.65*1.84716865,0.65*3.20279701);
\draw [->,thick,dashed] (-0.65*3.69074767,0.65*0.2198235) to [out=-120,in=140] (-0.65*3.20279701,-0.65*1.84716865);
\draw [->,thick,dashed] (-0.65*0.2198235,-0.65*3.69074767) to [out=-40,in=-110] (0.65*1.84716865,-0.65*3.20279701);
\node (E) at (-2.8,-1.15) {$\sigma_4$};
\node (F) at (-1.1,2.85) {$\sigma_3$};
\node (G) at (2.8,1.2) {$\sigma_2$};
\node (H) at (1.15,-2.9) {$\sigma_1$};

\node [teal, thick] (I) at (-3.6,-0.25) {$\widehat\theta_0$};
\end{tikzpicture}
}
\put(172,0){
\small Fourier--Laplace transform
}
\end{picture}
\caption{The complex plane covered by four closed sectors, which are determined by the generic directions $\theta_0$ and $\widehat\theta_0=\pi-\theta_0$. (The red arrows indicate the Stokes directions.)\newline
If a D-module of pure Gaussian type has a Hukuhara--Turrittin decomposition on each of the sectors $S_k$ (on the left) with exponents $-\frac{c}{2}z^2$ and Stokes multipliers $\sigma_k$, then its Fourier--Laplace transform has a Hukuhara--Turrittin decomposition on the sectors $\widehat{S}_k$ (on the right) with exponents $\frac{1}{2c}w^2$ and Stokes multipliers $\widehat{\sigma}_k=\sigma_k$.}
\label{figureMainResultAligned}
\end{figure}

\begin{cor}\label{corAlignedParametersDmodules}
Let $C\subset \C^\times$ be a finite subset whose elements have constant argument $\arg C$. Let $\cM\in\DbholD{\PP}$ be of pure Gaussian type $C$ and let $(\sigma_k)_{k\in\Z/4\Z}$ be Stokes multipliers with respect to the generic direction $\theta_0=-\frac{1}{2}\arg C$. Then the Fourier--Laplace transform $\dF{\cM}$ of $\cM$ is of pure Gaussian type $\widehat{C}=-1/C$ and Stokes multipliers with respect to the generic direction $\widehat{\theta}_0\defeq \pi-\theta_0$ are given by $(\sigma_k)_{k\in\Z/4\Z}$.
\end{cor}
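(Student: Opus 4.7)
The plan is to deduce this corollary formally from \cref{thmAlignedParametersEnhancedSheaves} by transporting the statement through the Riemann--Hilbert dictionary for pure Gaussian D-modules. First, I would apply \cref{thmBigSectors} to write $\SolE{\PP_z}(\cM) \iso \kE{\PP_z} \conv \cF_\sigma^{C,\theta_0,\ranks}$, where $\sigma = (\sigma_k)_{k \in \Z/4\Z}$ are the Stokes multipliers of $\cM$ with respect to $\theta_0$. An application of \cref{lemmaCompatibilityFourierSol} then gives
\[
\SolE{\PP_w}(\dF{\cM}) \iso \kE{\PP_w} \conv \sF{\cF_\sigma^{C,\theta_0,\ranks}},
\]
and inserting \cref{thmAlignedParametersEnhancedSheaves} simplifies this further to
\[
\SolE{\PP_w}(\dF{\cM}) \iso \kE{\PP_w} \conv \cF_\sigma^{\widehat{C},\widehat{\theta}_0,\widehat{\ranks}}.
\]
In particular, $\SolE{\PP_w}(\dF{\cM})$ belongs to $\EbGaussCrhat$.

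Next, I would invoke \cref{propModGaussEbGauss} applied with the hat parameters: the functor $\SolE{\PP_w}$ induces an equivalence $\ModGaussCrhat \iso \EbGaussCrhat$. Combined with the full faithfulness asserted in \cref{thmRH}, this forces $\dF{\cM}$ to lie in $\ModGaussCrhat$; in other words, $\dF{\cM}$ is of pure Gaussian type $\widehat{C}$ with rank data $\widehat{\ranks}$. To read off the Stokes multipliers, I would use \cref{propStokesDataEnhGauss}, under which $\kE{\PP_w} \conv \cF_\sigma^{\widehat{C},\widehat{\theta}_0,\widehat{\ranks}}$ corresponds to the Stokes datum $\sigma = (\sigma_k)_{k \in \Z/4\Z}$. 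Composing with \cref{propModGaussEbGauss}, i.e.\ the equivalence $\ModGaussCrhat \iso \StokesCrhat$ from \cref{equivalenceDmodStokesData} (which, as noted directly after that corollary, assigns to a D-module its Stokes matrices with respect to the chosen generic direction), we conclude that the Stokes multipliers of $\dF{\cM}$ with respect to $\widehat{\theta}_0$ are precisely $(\sigma_k)_{k \in \Z/4\Z}$, as claimed.

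The corollary is therefore a formal consequence of \cref{thmAlignedParametersEnhancedSheaves}, and no additional analytic input is needed. The genuine difficulty lies entirely in that theorem, where one must topologically compute $\sF{\cF_\sigma^{C,\theta_0,\ranks}}$. The only subtlety in the corollary itself is bookkeeping: one should check that the numbering conventions on the four sectors $\widehat{S}_k$ (induced by $\widehat{\theta}_0 = \pi - \theta_0$) and the induced order on $\widehat{C} = -1/C$ are consistent with those fixed in \cref{defStokesData,sectionStokesMultipliers}, so that the matrices $\sigma_k$ really play the role of Stokes multipliers for $\dF{\cM}$ in the same labelling scheme. This is a routine verification once the sector picture in \cref{figureMainResultAligned} is compared with the chosen orderings.
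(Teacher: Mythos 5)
Your proposal is correct and follows exactly the route the paper intends: the corollary is deduced from \cref{thmAlignedParametersEnhancedSheaves} via \cref{thmBigSectors}, \cref{lemmaCompatibilityFourierSol}, and the equivalences of \cref{propModGaussEbGauss}, \cref{propStokesDataEnhGauss} and \cref{equivalenceDmodStokesData}, which is precisely the chain the paper sets up in Section~\ref{MainStatementAligned} before reducing everything to the theorem. Your closing remark about checking the sector labelling and the induced order on $\widehat{C}$ is the right (and only) point of care, and it is addressed in the paper by the observation that $c<_{\theta_0}d$ if and only if $\widehat{c}<_{\widehat{\theta}_0}\widehat{d}$.
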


The rest of this section will be concerned with the proof of \cref{thmAlignedParametersEnhancedSheaves}.
The idea of the proof is as follows: We choose a decomposition of the plane into four closed sectors $\cS_k$, $k\in\Z/4\Z$, on which $\cF=\cF_\sigma^{C,\theta_0,\ranks}$ is trivialized as a direct sum of exponential enhanced sheaves. As usual, write $\cS_{k,k+1}\defeq \cS_k\cap \cS_{k+1}$. We will first compute the enhanced Fourier--Sato transforms of these exponential enhanced ind-sheaves on the $\cS_k$ and $\cS_{k,k+1}$ (and hence $\sF{(\cF_{\cS_k})}$ and $\sF{(\cF_{\cS_{k,k+1}})}$). Setting $\cH_+\defeq \cS_1\cup \cS_2$, $\cH_-\defeq \cS_3\cup \cS_4$  and $L\defeq \cS_{41}\cup \cS_{23}$, we can model the gluing of $\cF$ from the restrictions to sectors in terms of short exact sequences in $\Modk{\C\times\R}$:
\begin{align*}
0\To \cF_{\cH_+}\To \cF_{\cS_1}&\oplus \cF_{\cS_2}\To \cF_{\cS_{12}}\To 0\\
0\To \cF_{\cH_-}\To \cF_{\cS_3}&\oplus \cF_{\cS_4}\To \cF_{\cS_{34}}\To 0\\
0\To \cF_{L}\To \cF_{\cS_{41}}&\oplus \cF_{\cS_{23}}\To \cF_{\{0\}}\To 0\\
0\To \cF\To \cF_{\cH_+}&\oplus \cF_{\cH_-}\To \cF_{L}\To 0
\end{align*}
Applying the enhanced Fourier--Sato transform, we obtain distinguished triangles (which will turn out to be just short exact sequences), and we can determine step by step the enhanced Fourier--Sato transforms of $\cF_{H_+}$, $\cF_{H_-}$, $\cF_{L}$, and finally of $\cF$.

We will give a proof for the case where $\arg C\in(-\frac{\pi}{2},\frac{\pi}{2})$, i.e.\ $\real{c}>0$. The arguments for the other cases $\real{c}<0$ and $\real{c}=0$ work completely along the same lines. However, the geometry of the objects involved depends on the sign of $\real{c}$.

\subsection{Exponential enhanced sheaves on closed sectors}\label{sectionFourierOfExponentialsOnSectors}
Let us choose the sectors (note that $0\in\cS_k$)
\begin{align*}
\cS_1&\defeq \left\{ z\in \C_z\,\left| \, \arg z \in \left[0,\frac{\pi}{2}-\arg C\right]\text{if $z\neq 0$} \right\} \right.,\\
\cS_2&\defeq \left\{ z\in \C_z\,\left| \, \arg z \in \left[\frac{\pi}{2}-\arg C, \pi \right]\text{if $z\neq 0$} \right\} \right.,\\
\cS_3&\defeq \left\{ z\in \C_z\,\left| \, \arg z \in \left[-\pi,-\frac{\pi}{2}-\arg C \right]\text{if $z\neq 0$} \right\} \right.,\\
\cS_4&\defeq \left\{ z\in \C_z\,\left| \, \arg z \in \left[-\frac{\pi}{2}-\arg C, 0  \right]\text{if $z\neq 0$} \right\} \right..
\end{align*}
Denote the half-lines bounding the sectors by $\cS_{k,k+1}\defeq \cS_k \cap \cS_{k+1}$ for $k\in\Z/4\Z$.
It is easy to check that each of these sectors contains exactly one Stokes direction and that they are compatible with the $S_k$ in the sense of \cref{lemmaCompatibilitySectors}.

The first aim is to compute the enhanced Fourier--Sato transforms of the enhanced exponentials $\Exps{-\real{\frac{c}{2}z^2}}{\cS_k}{\C_z}$, which are the building blocks of $\cF^{C,\theta_0,\ranks}_\sigma$ on sectors. As mentioned, we assume $c=\rec+i\imc\in \C^\times$ with $\rec>0$. We will give the proof for $k=1$.

We can compute
\begin{align}
\sF{\Exps{-\real{\frac{c}{2}z^2}}{\cS_1}{\C_z}}&=\RR \qtild_! \Big(\kk_{\{t-\real{zw}\geq 0\}}\convs \ptild^{\mspace{2mu}-1}\big(\pitens{\cS_1} \kk_{\{t-\real{\frac{c}{2}z^2}\geq 0\}}\big)\Big)[1]\label{eq:enhFourierOfExpSheaf}\\
&\iso \RR\qtild_! \Big(\pitens{\cS_1\times\C_w} \big(\kk_{\{ t-\real{zw}\geq 0 \}}\convs \kk_{\{t-\real{\frac{c}{2}z^2}\geq 0\}}\big)\Big)[1]\notag\\
&\iso \RR\qtild_! \kk_{\{ (z,w,t)\in\C_z\times\C_w\times\R \mid z\in \cS_1, t-\real{(zw+\frac{c}{2}z^2)}\geq 0 \}}[1].\notag
\end{align}
In particular, the stalks of the cohomology sheaves at a point $(\wf,\tf)\in \C_w\times\R$ are determined by the topology of the intersection of two subspaces of $\C_z$:
\begin{equation*}
\mathrm{H}^l\big(\sF{\Exps{-\real{\frac{c}{2}z^2}}{\cS_1}{\C_z}}\big)_{(\wf,\tf)}\iso \Hc{l+1}\big(\cS_1\cap \big\{z\in\C_z\mid \tf - \real{\big(z\wf+\frac{c}{2}z^2\big)}\geq 0\big\},\kk\big).
\end{equation*}
The inequality $\tf - \real{\big(z\wf+\frac{c}{2}z^2\big)}\geq 0$ describes a region bounded by the two branches of a hyperbola. The hyperbola can be written in standard form if we write $z=z_1+iz_2$ and apply the coordinate transform \begin{equation}\label{eq:coordinateTransform}  x_1\defeq z_1-\frac{\imc}{\rec}z_2+\frac{\rewf}{\rec}, \quad x_2\defeq z_2+\frac{\rec\imwf-\imc\rewf}{|c|^2}.\end{equation}

Then, the space to be considered is the intersection of the (hyperbolic) region given by
$$\frac{\rec}{2}x_1^2 - \frac{|c|^2}{2\rec}x_2^2\leq \tf + \real{\frac{1}{2c}\wf^2}$$
and the sector given by
$$x_1\geq \frac{\wf_1}{\rec}, \quad x_2\geq\frac{\rec\wf_2-\imc\wf_1}{|c|^2}.$$
Clearly, the topology of this intersection highly depends on the values of $\tf$, $\rewf$ and $\imwf$. It is easy to see that the above compactly supported cohomology groups are trivial unless the intersection has a compact connected component (see \cref{figureHyperbolaAndSectorAligned}, noting that the unbounded components have vanishing cohomology with compact support), and by elementary considerations one can determine the cases in which such a compact connected component exists. This yields the following lemma.
\begin{lemma} There are isomorphisms
\begin{equation}\label{eq:CohomOfHyperbolaCapSectorDegNot-1}
\mathrm{H}^l\big( \sF{\Exps{-\real{\frac{c}{2}z^2}}{\cS_1}{\C_z}} \big)\iso0\text{ for $l\neq -1$}
\end{equation}
and 
\begin{equation}\label{eq:CohomOfHyperbolaCapSectorDeg-1}
\mathrm{H}^{-1}\big( \sF{\Exps{-\real{\frac{c}{2}z^2}}{\cS_1}{\C_z}} \big)_{(\wf,\tf)} \iso \begin{cases} \kk & \text{if $\imc\wf_1-\rec\wf_2\geq 0$ and $-\phirp{c}(\wf)\leq \tf< -\phirm{c}(\wf)$} \\ 0 &\text{otherwise}\end{cases}
\end{equation}
with the continuous functions $\phirp{c},\phirm{c}\colon \C_w\to\R$ defined by
$$\phirp{c}(\rew+i\imw)\defeq\begin{cases} \frac{\rew^2}{2\rec} &\text{if $\rew\leq0$}\\ 0 & \text{if $\rew> 0$} \end{cases}$$
and
$$\phirm{c}(\rew+i\imw)\defeq\begin{cases} \frac{1}{2|c|^2}(\rec\rew^2-\rec\imw^2+2\imc \rew\imw)=\real{\frac{1}{2c}w^2} &\text{if $\rew\leq 0$}\\ -\frac{1}{2\rec|c|^2}(\imc\rew-\rec\imw)^2 \eqdef \dif{c}(w) & \text{if $\rew> 0$} \end{cases}.$$
Observe that $\phirp{c}(w)-\phirm{c}(w)=\frac{1}{2\rec|c|^2}(\imc\rew-\rec\imw)^2$, so $\phirp{c}(w)\geq\phirm{c}(w)$ for all $w\in \C_w$.
\end{lemma}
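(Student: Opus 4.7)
The plan is to analyze the topology of the fiber
$\Xwt \defeq \cS_1 \cap \big\{z \mid \tf - \real{\big(z\wf + \tfrac{c}{2}z^2\big)} \geq 0\big\}$
already featured in \eqref{eq:enhFourierOfExpSheaf}, which by proper base change identifies the stalks as $\Hc{l+1}(\Xwt, \kk)$. The change of coordinates \eqref{eq:coordinateTransform} is designed to bring the region into standard form: the sector $\cS_1$ becomes the closed quadrant $\{x_1 \geq a, x_2 \geq b\}$ with $a = \wf_1/\rec$ and $b = (\rec\wf_2 - \imc\wf_1)/|c|^2$, while the half-plane condition becomes $\tfrac{\rec}{2}x_1^2 - \tfrac{|c|^2}{2\rec}x_2^2 \leq R$ with $R = \tf + \real{\wf^2/(2c)}$. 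I would verify this first by direct expansion, observing that $\tfrac{\rec}{2}x_1^2 - \tfrac{|c|^2}{2\rec}x_2^2 - \real{\wf^2/(2c)} = \real{\big(z\wf + \tfrac{c}{2}z^2\big)}$ modulo the constants absorbed into the shift.

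The second step is purely topological: every connected component of $\Xwt$ is simply connected, being bounded by at most two straight half-lines and a convex arc of a hyperbola branch. Compact components are therefore contractible and contribute $\kk$ in $\Hc{0}$, vanishing in all higher degrees. Unbounded components are homeomorphic to a closed half-plane or closed quadrant, and their compactly supported cohomology vanishes in all degrees; this can be checked through the open/closed long exact sequence for $\R^2 = (\R^2 \setminus Z) \sqcup Z$, noting that the open complement deformation retracts to a point via the radial homotopy $h_t(x) = (1-t)x$, so that the extension-by-zero $\Hc{2}(\R^2\setminus Z) \to \Hc{2}(\R^2)$ is an isomorphism and forces $\Hc{*}(Z) = 0$. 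This at once proves \eqref{eq:CohomOfHyperbolaCapSectorDegNot-1} and reduces \eqref{eq:CohomOfHyperbolaCapSectorDeg-1} to counting the compact connected components of $\Xwt$.

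The geometric heart of the proof is then to determine when a compact component exists. It appears precisely when the right branch $x_1 = \sqrt{2R/\rec + |c|^2 x_2^2/\rec^2}$ (or its asymptotic limit for $R \leq 0$) cuts off the corner $(a,b)$ from the unbounded part of the quadrant. This requires both that $(a,b)$ lies in the hyperbolic region and that the branch meets $\{x_1 = a\}$ at some $x_2 = -M \geq b$ with $M^2 = \rec(a^2\rec - 2R)/|c|^2$. Necessarily $b \leq 0$, which in $\wf$-coordinates reads $\imc\wf_1 - \rec\wf_2 \geq 0$, and $R$ is constrained to an interval. Using the identities
\[\tfrac{a^2\rec^2 - b^2|c|^2}{2\rec} = \real{\tfrac{\wf^2}{2c}}, \qquad \tfrac{a^2\rec}{2} - \real{\tfrac{\wf^2}{2c}} = \tfrac{(\imc\wf_1 - \rec\wf_2)^2}{2\rec|c|^2},\]
which follow by straightforward algebra from the definitions of $a$ and $b$, this interval rewrites exactly as $-\phirp{c}(\wf) \leq \tf < -\phirm{c}(\wf)$, with a natural split according to the sign of $\wf_1$ mirroring the piecewise definitions of $\phirp{c}$ and $\phirm{c}$.

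The main obstacle will be this final case analysis. Although the geometric picture is clean, one must juggle the sub-cases $R \gtreqless 0$ and $\wf_1 \gtreqless 0$ and verify that the two inequalities ``$|b| \geq M$'' and ``$M^2 \geq 0$'' collapse cleanly to the stated piecewise bounds. The asymmetry between the weak inequality $-\phirp{c}(\wf) \leq \tf$ and the strict inequality $\tf < -\phirm{c}(\wf)$ has a clear geometric interpretation that would need to be handled carefully: at $\tf = -\phirp{c}(\wf)$ the corner $(a,b)$ lies exactly on the hyperbola and the compact component degenerates to this single point, still compact and contractible and so still contributing $\kk$ to $\Hc{0}$; whereas at $\tf = -\phirm{c}(\wf)$ the branch becomes tangent to $\{x_1 = a\}$ (or to the asymptote through the origin) and the would-be compact component merges with the unbounded one, destroying compactness and hence the cohomological contribution.
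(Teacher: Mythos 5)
Your overall strategy is the one the paper itself (only) sketches: identify the stalks with $\Hc{l+1}$ of the intersection of the hyperbolic region with the transformed quadrant $\{x_1\geq a,\;x_2\geq b\}$, observe that unbounded components have vanishing compactly supported cohomology while compact components are contractible, and reduce to deciding when a compact component exists. Those reductions, and your two algebraic identities, are correct. The gap is in the geometric criterion itself. You claim a compact component appears \emph{precisely} when $(a,b)$ lies in the hyperbolic region and the right branch meets $\{x_1=a\}$ at $x_2=-M\geq b$. By your first identity, $(a,b)$ lies in $\{\tfrac{\rec}{2}x_1^2-\tfrac{|c|^2}{2\rec}x_2^2\leq R\}$ iff $\real{\tfrac{\wf^2}{2c}}\leq R$, i.e.\ iff $\tf\geq 0$; and since $b\leq 0\leq M$, the inequality $-M\geq b$ squares to $M^2\leq b^2$, which by the same identity is again $\tf\geq 0$. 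Together with $M^2\geq 0$, i.e.\ $\tf\leq\phirp{c}(\wf)-\phirm{c}(\wf)$, your two inequalities therefore collapse to $0\leq\tf\leq\phirp{c}(\wf)-\phirm{c}(\wf)$ in \emph{every} sub-case. This coincides with the asserted range $-\phirp{c}(\wf)\leq\tf<-\phirm{c}(\wf)$ (up to the endpoint you treat separately) only when $\wf_1>0$, where $\phirp{c}(\wf)=0$. For $\wf_1<0$ the asserted range is $[-\tfrac{\wf_1^2}{2\rec},\,-\real{\tfrac{\wf^2}{2c}})$, which consists of negative values of $\tf$ whenever $\real{\tfrac{\wf^2}{2c}}>0$. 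Concretely, for $c=7+i$, $\wf=-2-2i$, $\tf=-3/25$ (the parameters of the left picture in \cref{figureHyperbolaAndSectorAligned}) the stalk is $\kk$, yet the corner does \emph{not} lie in the hyperbolic region and $-M\approx -0.30<b=-0.24$, so both of your conditions fail.

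The reason is that for $a=\wf_1/\rec<0$ the compact component arises by a different mechanism. There one necessarily has $R<0$, the hyperbola opens up--down, and the compact piece is the lens trapped between the horizontal edge $\{x_2=b\}$ of the quadrant and the apex (at $x_1=0$) of the lower branch; it contains neither the corner nor any point of the line $\{x_1=a\}$, and the degenerate configuration at $\tf=-\phirp{c}(\wf)$ is the tangency of the lower branch with $\{x_2=b\}$ at the point $(0,b)$, not the corner landing on the hyperbola. The correct criterion in this regime is that the apex value $-\sqrt{-2\rec R}/|c|$ of the lower branch be $\geq b$, together with $R<0$; this does rewrite as $-\phirp{c}(\wf)\leq\tf<-\phirm{c}(\wf)$, so the lemma is true, but your single corner-cutting picture returns the wrong stalk on an open set of $(\wf,\tf)$ with $\wf_1<0$ (claiming $0$ where the answer is $\kk$, and $\kk$ on $0\leq\tf\leq\phirp{c}(\wf)-\phirm{c}(\wf)$ where the answer is $0$). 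You need a genuinely separate geometric analysis for $\wf_1\leq 0$, and correspondingly for the sign of $R$, rather than one criterion claimed to cover all sub-cases.
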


\begin{figure}[ht]
\centering
\begin{picture}(350,70)
\put(-10,0){
\newcommand{\pt}{(-3/25)}
\newcommand{\pcr}{7}
\newcommand{\pci}{1}
\newcommand{\pwr}{(-2)}
\newcommand{\pwi}{(-2)}
\begin{tikzpicture}[scale=1.125]
\draw[Cyan, domain=-1:1] plot [smooth] (\x,{sqrt(2*\pcr/(\pcr*\pcr+\pci*\pci)*(-\pt-1/2/(\pcr*\pcr+\pci*\pci)*(\pcr*\pwr*\pwr-\pcr*\pwi*\pwi+2*\pci*\pwr*\pwi)+\pcr/2*\x*\x))});
\fill [Cyan, opacity=0.5, domain=-1:1] (-1,{sqrt(2*\pcr/(\pcr*\pcr+\pci*\pci)*(-\pt-1/2/(\pcr*\pcr+\pci*\pci)*(\pcr*\pwr*\pwr-\pcr*\pwi*\pwi+2*\pci*\pwr*\pwi)+\pcr/2))}) -- plot [smooth] (\x,{sqrt(2*\pcr/(\pcr*\pcr+\pci*\pci)*(-\pt-1/2/(\pcr*\pcr+\pci*\pci)*(\pcr*\pwr*\pwr-\pcr*\pwi*\pwi+2*\pci*\pwr*\pwi)+\pcr/2*\x*\x))}) -- (1,{sqrt(2*\pcr/(\pcr*\pcr+\pci*\pci)*(-\pt-1/2/(\pcr*\pcr+\pci*\pci)*(\pcr*\pwr*\pwr-\pcr*\pwi*\pwi+2*\pci*\pwr*\pwi)+\pcr/2))}) -- cycle;

\draw[Cyan, domain=-1:1] plot [smooth] (\x,{-sqrt(2*\pcr/(\pcr*\pcr+\pci*\pci)*(-\pt-1/2/(\pcr*\pcr+\pci*\pci)*(\pcr*\pwr*\pwr-\pcr*\pwi*\pwi+2*\pci*\pwr*\pwi)+\pcr/2*\x*\x))});
\fill [Cyan, opacity=0.5, domain=-1:1] (-1,{-sqrt(2*\pcr/(\pcr*\pcr+\pci*\pci)*(-\pt-1/2/(\pcr*\pcr+\pci*\pci)*(\pcr*\pwr*\pwr-\pcr*\pwi*\pwi+2*\pci*\pwr*\pwi)+\pcr/2))}) -- plot [smooth] (\x,{-sqrt(2*\pcr/(\pcr*\pcr+\pci*\pci)*(-\pt-1/2/(\pcr*\pcr+\pci*\pci)*(\pcr*\pwr*\pwr-\pcr*\pwi*\pwi+2*\pci*\pwr*\pwi)+\pcr/2*\x*\x))}) -- (1,{-sqrt(2*\pcr/(\pcr*\pcr+\pci*\pci)*(-\pt-1/2/(\pcr*\pcr+\pci*\pci)*(\pcr*\pwr*\pwr-\pcr*\pwi*\pwi+2*\pci*\pwr*\pwi)+\pcr/2))}) -- cycle;
		
\draw[Red,-] ({\pwr/\pcr},{(\pcr*\pwi-\pci*\pwr)/(\pcr*\pcr+\pci*\pci)})--(1,{(\pcr*\pwi-\pci*\pwr)/(\pcr*\pcr+\pci*\pci)});
\draw[Red,-] ({\pwr/\pcr},{(\pcr*\pwi-\pci*\pwr)/(\pcr*\pcr+\pci*\pci)})--({\pwr/\pcr},{sqrt(2*\pcr/(\pcr*\pcr+\pci*\pci)*(-\pt-1/2/(\pcr*\pcr+\pci*\pci)*(\pcr*\pwr*\pwr-\pcr*\pwi*\pwi+2*\pci*\pwr*\pwi)+\pcr/2))});
\fill[Red, opacity=0.3] ({\pwr/\pcr},{(\pcr*\pwi-\pci*\pwr)/(\pcr*\pcr+\pci*\pci)})--(1,{(\pcr*\pwi-\pci*\pwr)/(\pcr*\pcr+\pci*\pci)})--(1,{sqrt(2*\pcr/(\pcr*\pcr+\pci*\pci)*(-\pt-1/2/(\pcr*\pcr+\pci*\pci)*(\pcr*\pwr*\pwr-\pcr*\pwi*\pwi+2*\pci*\pwr*\pwi)+\pcr/2))})--({\pwr/\pcr},{sqrt(2*\pcr/(\pcr*\pcr+\pci*\pci)*(-\pt-1/2/(\pcr*\pcr+\pci*\pci)*(\pcr*\pwr*\pwr-\pcr*\pwi*\pwi+2*\pci*\pwr*\pwi)+\pcr/2))});
\end{tikzpicture}
}
\put(70,0){
\newcommand{\pt}{5}
\newcommand{\pcr}{1}
\newcommand{\pci}{1}
\newcommand{\pwr}{1.5}
\newcommand{\pwi}{-4}
\begin{tikzpicture}[scale=0.3]
\draw[Cyan, domain=-5:5] plot [smooth] (\x,{sqrt(2*\pcr/(\pcr*\pcr+\pci*\pci)*(-\pt-1/2/(\pcr*\pcr+\pci*\pci)*(\pcr*\pwr*\pwr-\pcr*\pwi*\pwi+2*\pci*\pwr*\pwi)+\pcr/2*\x*\x))});
\fill [Cyan, opacity=0.5, domain=-5:5] (-5,{sqrt(2*\pcr/(\pcr*\pcr+\pci*\pci)*(-\pt-1/2/(\pcr*\pcr+\pci*\pci)*(\pcr*\pwr*\pwr-\pcr*\pwi*\pwi+2*\pci*\pwr*\pwi)+\pcr/2*25))}) -- plot [smooth] (\x,{sqrt(2*\pcr/(\pcr*\pcr+\pci*\pci)*(-\pt-1/2/(\pcr*\pcr+\pci*\pci)*(\pcr*\pwr*\pwr-\pcr*\pwi*\pwi+2*\pci*\pwr*\pwi)+\pcr/2*\x*\x))}) -- (5,{sqrt(2*\pcr/(\pcr*\pcr+\pci*\pci)*(-\pt-1/2/(\pcr*\pcr+\pci*\pci)*(\pcr*\pwr*\pwr-\pcr*\pwi*\pwi+2*\pci*\pwr*\pwi)+\pcr/2*25))}) -- cycle;

\draw[Cyan, domain=-5:5] plot [smooth] (\x,{-sqrt(2*\pcr/(\pcr*\pcr+\pci*\pci)*(-\pt-1/2/(\pcr*\pcr+\pci*\pci)*(\pcr*\pwr*\pwr-\pcr*\pwi*\pwi+2*\pci*\pwr*\pwi)+\pcr/2*\x*\x))});
\fill [Cyan, opacity=0.5, domain=-5:5] (-5,{-sqrt(2*\pcr/(\pcr*\pcr+\pci*\pci)*(-\pt-1/2/(\pcr*\pcr+\pci*\pci)*(\pcr*\pwr*\pwr-\pcr*\pwi*\pwi+2*\pci*\pwr*\pwi)+\pcr/2*25))}) -- plot [smooth] (\x,{-sqrt(2*\pcr/(\pcr*\pcr+\pci*\pci)*(-\pt-1/2/(\pcr*\pcr+\pci*\pci)*(\pcr*\pwr*\pwr-\pcr*\pwi*\pwi+2*\pci*\pwr*\pwi)+\pcr/2*\x*\x))}) -- (5,{-sqrt(2*\pcr/(\pcr*\pcr+\pci*\pci)*(-\pt-1/2/(\pcr*\pcr+\pci*\pci)*(\pcr*\pwr*\pwr-\pcr*\pwi*\pwi+2*\pci*\pwr*\pwi)+\pcr/2*25))}) -- cycle;

\draw[Red,-] ({\pwr/\pcr},{(\pcr*\pwi-\pci*\pwr)/(\pcr*\pcr+\pci*\pci)})--(5,{(\pcr*\pwi-\pci*\pwr)/(\pcr*\pcr+\pci*\pci)});
\draw[Red,-] ({\pwr/\pcr},{(\pcr*\pwi-\pci*\pwr)/(\pcr*\pcr+\pci*\pci)})--({\pwr/\pcr},{sqrt(2*\pcr/(\pcr*\pcr+\pci*\pci)*(-\pt-1/2/(\pcr*\pcr+\pci*\pci)*(\pcr*\pwr*\pwr-\pcr*\pwi*\pwi+2*\pci*\pwr*\pwi)+\pcr/2*25))});
\fill[Red, opacity=0.3] ({\pwr/\pcr},{(\pcr*\pwi-\pci*\pwr)/(\pcr*\pcr+\pci*\pci)})--(5,{(\pcr*\pwi-\pci*\pwr)/(\pcr*\pcr+\pci*\pci)})--(5,{sqrt(2*\pcr/(\pcr*\pcr+\pci*\pci)*(-\pt-1/2/(\pcr*\pcr+\pci*\pci)*(\pcr*\pwr*\pwr-\pcr*\pwi*\pwi+2*\pci*\pwr*\pwi)+\pcr/2*25))})--({\pwr/\pcr},{sqrt(2*\pcr/(\pcr*\pcr+\pci*\pci)*(-\pt-1/2/(\pcr*\pcr+\pci*\pci)*(\pcr*\pwr*\pwr-\pcr*\pwi*\pwi+2*\pci*\pwr*\pwi)+\pcr/2*25))});
\end{tikzpicture}
}
\put(170,0){
\newcommand{\pt}{(200/328)}
\newcommand{\pcr}{4}
\newcommand{\pci}{(-5)}
\newcommand{\pwr}{3}
\newcommand{\pwi}{(-9)}
\begin{tikzpicture}[scale=1.125]
\draw[Cyan] ({sqrt(2*(\pt+1/2/(\pcr*\pcr+\pci*\pci)*(\pcr*\pwr*\pwr-\pcr*\pwi*\pwi+2*\pci*\pwr*\pwi))/\pcr+(\pcr*\pcr+\pci*\pci)/2/\pcr)},-1) -- plot [smooth, domain=-1:1] ({sqrt(2*(\pt+1/2/(\pcr*\pcr+\pci*\pci)*(\pcr*\pwr*\pwr-\pcr*\pwi*\pwi+2*\pci*\pwr*\pwi))/\pcr+(\pcr*\pcr+\pci*\pci)/2/\pcr*\x*\x)},\x) -- ({sqrt(2*(\pt+1/2/(\pcr*\pcr+\pci*\pci)*(\pcr*\pwr*\pwr-\pcr*\pwi*\pwi+2*\pci*\pwr*\pwi))/\pcr+(\pcr*\pcr+\pci*\pci)/2/\pcr)},1);
\draw[Cyan] ({-sqrt(2*(\pt+1/2/(\pcr*\pcr+\pci*\pci)*(\pcr*\pwr*\pwr-\pcr*\pwi*\pwi+2*\pci*\pwr*\pwi))/\pcr+(\pcr*\pcr+\pci*\pci)/2/\pcr)},1) -- plot [smooth, domain=1:-1] ({-sqrt(2*(\pt+1/2/(\pcr*\pcr+\pci*\pci)*(\pcr*\pwr*\pwr-\pcr*\pwi*\pwi+2*\pci*\pwr*\pwi))/\pcr+(\pcr*\pcr+\pci*\pci)/2/\pcr*\x*\x)},\x) -- ({-sqrt(2*(\pt+1/2/(\pcr*\pcr+\pci*\pci)*(\pcr*\pwr*\pwr-\pcr*\pwi*\pwi+2*\pci*\pwr*\pwi))/\pcr+(\pcr*\pcr+\pci*\pci)/2/\pcr)},-1);
\fill[Cyan, opacity=0.5] ({sqrt(2*(\pt+1/2/(\pcr*\pcr+\pci*\pci)*(\pcr*\pwr*\pwr-\pcr*\pwi*\pwi+2*\pci*\pwr*\pwi))/\pcr+(\pcr*\pcr+\pci*\pci)/2/\pcr)},-1) -- plot [smooth, domain=-1:1] ({sqrt(2*(\pt+1/2/(\pcr*\pcr+\pci*\pci)*(\pcr*\pwr*\pwr-\pcr*\pwi*\pwi+2*\pci*\pwr*\pwi))/\pcr+(\pcr*\pcr+\pci*\pci)/2/\pcr*\x*\x)},\x) -- ({sqrt(2*(\pt+1/2/(\pcr*\pcr+\pci*\pci)*(\pcr*\pwr*\pwr-\pcr*\pwi*\pwi+2*\pci*\pwr*\pwi))/\pcr+(\pcr*\pcr+\pci*\pci)/2/\pcr)},1) -- ({-sqrt(2*(\pt+1/2/(\pcr*\pcr+\pci*\pci)*(\pcr*\pwr*\pwr-\pcr*\pwi*\pwi+2*\pci*\pwr*\pwi))/\pcr+(\pcr*\pcr+\pci*\pci)/2/\pcr)},1) -- plot [smooth, domain=1:-1] ({-sqrt(2*(\pt+1/2/(\pcr*\pcr+\pci*\pci)*(\pcr*\pwr*\pwr-\pcr*\pwi*\pwi+2*\pci*\pwr*\pwi))/\pcr+(\pcr*\pcr+\pci*\pci)/2/\pcr*\x*\x)},\x) -- ({-sqrt(2*(\pt+1/2/(\pcr*\pcr+\pci*\pci)*(\pcr*\pwr*\pwr-\pcr*\pwi*\pwi+2*\pci*\pwr*\pwi))/\pcr+(\pcr*\pcr+\pci*\pci)/2/\pcr)},-1) -- cycle;

\draw[Red,-] ({\pwr/\pcr},{(\pcr*\pwi-\pci*\pwr)/(\pcr*\pcr+\pci*\pci)})--({sqrt(2*(\pt+1/2/(\pcr*\pcr+\pci*\pci)*(\pcr*\pwr*\pwr-\pcr*\pwi*\pwi+2*\pci*\pwr*\pwi))/\pcr+(\pcr*\pcr+\pci*\pci)/2/\pcr)},{(\pcr*\pwi-\pci*\pwr)/(\pcr*\pcr+\pci*\pci)});
\draw[Red,-] ({\pwr/\pcr},{(\pcr*\pwi-\pci*\pwr)/(\pcr*\pcr+\pci*\pci)})--({\pwr/\pcr},1);
\fill[Red, opacity=0.3] ({\pwr/\pcr},{(\pcr*\pwi-\pci*\pwr)/(\pcr*\pcr+\pci*\pci)})--({sqrt(2*(\pt+1/2/(\pcr*\pcr+\pci*\pci)*(\pcr*\pwr*\pwr-\pcr*\pwi*\pwi+2*\pci*\pwr*\pwi))/\pcr+(\pcr*\pcr+\pci*\pci)/2/\pcr)},{(\pcr*\pwi-\pci*\pwr)/(\pcr*\pcr+\pci*\pci)})--({sqrt(2*(\pt+1/2/(\pcr*\pcr+\pci*\pci)*(\pcr*\pwr*\pwr-\pcr*\pwi*\pwi+2*\pci*\pwr*\pwi))/\pcr+(\pcr*\pcr+\pci*\pci)/2/\pcr)},1)--({\pwr/\pcr},1) --cycle;
\end{tikzpicture}
}
\end{picture}
\caption{The cases in which the intersection of hyperbolic region and sector has a compact connected component.}
\label{figureHyperbolaAndSectorAligned}
\end{figure}
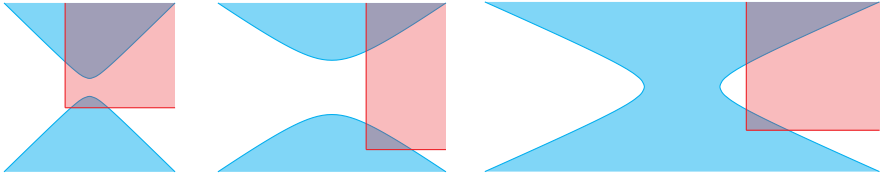

The cases of the sectors $\cS_2$, $\cS_3$ and $\cS_4$ are analogous. For the sectors $\cS_2$ and $\cS_3$, one needs to introduce the continuous functions $\philp{c},\philm{c}\colon \C_w\to\R$, which are given by

$$\philp{c}(\rew+i\imw)\defeq\begin{cases} 0 & \text{if $\rew< 0$} \\ \frac{\rew^2}{2\rec} &\text{if $\rew\geq 0$} \end{cases}$$
and
$$\philm{c}(\rew+i\imw)\defeq\begin{cases} -\frac{1}{2\rec|c|^2}(\imc\rew-\rec\imw)^2 = \dif{c}(w) & \text{if $\rew< 0$} \\ \frac{1}{2|c|^2}(\rec\rew^2-\rec\imw^2+2\imc\rew\imw)=\real{\frac{1}{2c}w^2} &\text{if $\rew\geq 0$} \end{cases}.$$

Set $\widehat{\cH}_-\defeq\{ w\in \C_w \mid \imc\rew-\rec\imw\geq 0 \}$ and $\widehat{\cH}_+\defeq \{w\in\C_w\mid\imc\rew-\rec\imw\leq 0\}$. Note that these half-planes only depend on $\arg C$.
The stalks suggest the following global statement. (Recall the notation from \cref{sectionEnhancedExponentials}.)

\begin{prop}\label{FourierOnSectors}
There are isomorphisms in $\Dbk{\C_w\times\R}$
\begin{align*}
\sF{\Exps{-\real{\frac{c}{2}}z^2}{\cS_1}{\C_z}}&\iso \ExpS{\phirp{c}}{\phirm{c}}{\widehat{\cH}_-}{\C_w}[1],
&\sF{\Exps{-\real{\frac{c}{2}}z^2}{\cS_2}{\C_z}}&\iso \ExpS{\philp{c}}{\philm{c}}{\widehat{\cH}_-}{\C_w}[1],\\
\sF{\Exps{-\real{\frac{c}{2}}z^2}{\cS_3}{\C_z}}&\iso \ExpS{\philp{c}}{\philm{c}}{\widehat{\cH}_+}{\C_w}[1],
&\sF{\Exps{-\real{\frac{c}{2}}z^2}{\cS_4}{\C_z}}&\iso \ExpS{\phirp{c}}{\phirm{c}}{\widehat{\cH}_+}{\C_w}[1].
\end{align*}
\end{prop}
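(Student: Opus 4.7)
The strategy is to prove the four isomorphisms in parallel, so I focus on the first (the case of $\cS_1$); the others should follow by analogous computations with the functions $\philp{c}, \philm{c}$ in place of $\phirp{c}, \phirm{c}$ and/or $\widehat{\cH}_+$ in place of $\widehat{\cH}_-$, as appropriate. The cases $\real{c} \leq 0$ work from the same template but with a different geometric picture of the level sets of $\real{(zw+\frac{c}{2}z^2)}$ on the sector.

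From \eqref{eq:enhFourierOfExpSheaf} we have $\sF{\Exps{-\real{\frac{c}{2}z^2}}{\cS_1}{\C_z}} \iso \RR\qtild_! \kk_Z[1]$, and the stalk computation already carried out in \eqref{eq:CohomOfHyperbolaCapSectorDegNot-1}--\eqref{eq:CohomOfHyperbolaCapSectorDeg-1} shows that $\RR\qtild_! \kk_Z$ is concentrated in degree $0$, with stalks equal to $\kk$ precisely on the locally closed subset $A := \{(w,t) \mid w \in \widehat{\cH}_-,\ -\phirp{c}(w) \leq t < -\phirm{c}(w)\}$, and $0$ elsewhere. This agrees pointwise with the stalks of $\pi^{-1}\kk_{\widehat{\cH}_-} \otimes \kk_{\{-\phirp{c}\leq t<-\phirm{c}\}}$, the underlying sheaf of the proposed target. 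It remains to promote this pointwise agreement to a sheaf-level isomorphism.

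For the sheaf-level identification, I would decompose $Z$ into its locally closed subset $Z^{\mathrm{cpt}}$ consisting of the compact connected components of fibers of $\qtild|_Z$, together with its complement $U := Z \setminus Z^{\mathrm{cpt}}$. The fibers of $\qtild|_U$ are non-compact closed regions homeomorphic to closed half-planes (possibly with corners), and by a Künneth argument based on $\Hc{*}([0,\infty),\kk) = 0$ they have vanishing compactly supported cohomology in every degree. Consequently $\RR\qtild_! \kk_U = 0$, and the open-closed distinguished triangle yields $\RR\qtild_! \kk_Z \iso \RR\qtild_! \kk_{Z^{\mathrm{cpt}}}$. The restriction $\qtild|_{Z^{\mathrm{cpt}}} \colon Z^{\mathrm{cpt}} \to A$ should then be proper with contractible (disk-like) fibers, so that proper base change gives $\RR\qtild_! \kk_{Z^{\mathrm{cpt}}} \iso j_! \kk_A$, where $j\colon A \hookrightarrow \C_w \times \R$ is the inclusion. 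Applying the shift $[1]$ delivers the claimed isomorphism.

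The hard part will be the boundary analysis: verifying that $Z^{\mathrm{cpt}}$ is a well-defined locally closed subset of $Z$ globally, that $\qtild|_{Z^{\mathrm{cpt}}}$ is genuinely proper over $A$, and --- most delicately --- that its behavior at $\partial A$ is precisely compatible with the closed/open boundary conditions $-\phirp{c}(w) \leq t$ and $t < -\phirm{c}(w)$ appearing in the target. Geometrically, one must show that the compact component degenerates continuously (to a point or to a cohomologically trivial subspace) at the closed boundary $\{t = -\phirp{c}\}$, while it merges discontinuously with a non-compact component at the open boundary $\{t = -\phirm{c}\}$, matching exactly the $\leq$/$<$ asymmetry in the definition of $\ExpS{\phirp{c}}{\phirm{c}}{\widehat{\cH}_-}{\C_w}$. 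Once this geometric bookkeeping is in place for $\cS_1$, the remaining three sectors are handled by the same template with the substitutions indicated above.
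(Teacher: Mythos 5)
Your proposal is correct in outline and reaches the same reduction points, but the sheaf-level identification is organized differently from the paper's proof. The paper first removes the locus $\{t\geq -\phirm{c}(w)\}$ using the projection formula (so that the vanishing there is read off directly from the stalk computation, with no geometry needed), and then, instead of isolating the compact components of the fibers, it exhibits an explicit closed subset $B$ of the remaining set consisting of \emph{one point per fiber} --- the rightmost intersection of the hyperbolic region with the horizontal edge of $\cS_1$ --- shows that the complement has fiberwise trivial compactly supported cohomology, and observes that $\qtild$ restricts to a homeomorphism from $B$ onto $\{w\in\widehat{\cH}_-,\ -\phirp{c}(w)\leq t<-\phirm{c}(w)\}$. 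This section trick buys exactly what your approach has to work harder for: since $B$ maps homeomorphically onto the support, the sheaf $\RR\qtild_!\kk_B$ is literally the constant sheaf extended by zero, and all the boundary bookkeeping (the $\leq$/$<$ asymmetry) is absorbed into checking that $B$ is closed in $U$ and that $\qtild|_B$ is a homeomorphism. By contrast, your route through $Z^{\mathrm{cpt}}$ needs (a) local closedness of $Z^{\mathrm{cpt}}$, (b) properness of $\qtild|_{Z^{\mathrm{cpt}}}$ over the support, and (c) --- the one step you state too loosely --- the passage from ``proper with contractible fibers'' to $\RR\qtild_!\kk_{Z^{\mathrm{cpt}}}\iso j_!\kk_A$. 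Proper base change alone only computes the stalks, and a complex whose cohomology has all stalks $\kk$ in degree $0$ need not be the constant sheaf; you must additionally check that the unit $\kk_A\to \RR(\qtild|_{Z^{\mathrm{cpt}}})_*\kk_{Z^{\mathrm{cpt}}}$ is an isomorphism (a Vietoris--Begle-type argument, using properness to identify the stalk of the direct image with the cohomology of the fiber). This is standard and your geometric hypotheses do support it, so the gap is repairable; but you should be aware that the paper's choice of a one-point section per fiber is precisely what lets it avoid this issue altogether. The inputs common to both arguments --- vanishing of $\Hc{\ast}$ of the unbounded components and the case analysis locating the compact component --- are the same.
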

\begin{proof}
We give a proof for the case of $\cS_1$. 

Set $A\defeq\{ (z,w,t)\in\C_z\times\C_w\times\R\mid t-\real{(zw+\frac{c}{2}z^2)} \geq 0\}\cap (\cS_1\times\C_w\times\R)$ and recall from \eqref{eq:enhFourierOfExpSheaf} that $\sF{\Exps{-\real{\frac{c}{2}}z^2}{\cS_1}{\C_z}}\iso \RR\qtild_!\kk_A[1]$.

First, consider the set
$$U\defeq\Big\{ (z,w,t)\in\C_z\times\C_w\times\R\,\Big|\, z\in \cS_1, t-\real{\Big(zw+\frac{c}{2}z^2\Big)} \geq 0, t<-\phirm{c}(w) \Big\}.$$
It is an open subset of $A$ and hence we have a distinguished triangle in $\Dbk{\C_w\times\R}$
$$\RR\qtild_!\kk_U\To \RR\qtild_!\kk_A\To \RR\qtild_!\kk_{A\setminus U}\ToPO.$$
By the projection formula, $\RR\qtild_!\kk_{A\setminus U}\iso \RR\qtild_!(\kk_{A}\otimes\qtild^{-1}\kk_{\{(w,t)\in\C_w\times\R\mid t\geq-\phirm{c}(w)\}})\iso \RR\qtild_!\kk_A\otimes \kk_{\{(w,t)\in\C_w\times\R\mid t\geq-\phirm{c}(w)\}}$ and hence it follows from \eqref{eq:CohomOfHyperbolaCapSectorDegNot-1} and \eqref{eq:CohomOfHyperbolaCapSectorDeg-1} that $\RR\qtild_!\kk_{A\setminus U}\iso 0$ and $\RR\qtild_!\kk_A\iso \RR\qtild_!\kk_U$.

Next, consider the set
\begin{align*}
B\defeq\Big\{ (z,w,t)\in\C_z\times\C_w\times\R\,\Big|\, &z_1=\frac{1}{\rec}\Big(\sqrt{2\rec t+\rew^2}-\rew\Big), z_2=0,\\
&\imc\rew-\rec\imw\geq 0, -\phirp{c}(w)\leq t<-\phirm{c}(w) \Big\}
\end{align*}
For fixed $\wf$ and $\tf$, the corresponding point $z=\frac{1}{\rec}\big(\sqrt{2\rec\tf+\wf_1^2}-\wf_1\big)$ is the rightmost intersection point of the hyperbolic region $\{\tf-\real{(z\wf+\frac{c}{2}z^2)}\geq 0\}$ with the horizontal border of the sector $\cS_1$. Moreover, $B$ is a closed subset of $U$ and we get a distinguished triangle in $\Dbk{\C_w\times\R}$
$$\RR\qtild_!\kk_{U\setminus B}\To \RR\qtild_!\kk_{U}\To \RR\qtild_!\kk_{B}\ToPO.$$
The stalks of the cohomology sheaves of $\RR\qtild_!\kk_{U\setminus B}$ are all trivial, and hence $\RR\qtild_!\kk_{B}\iso \RR\qtild_!\kk_{U}$.

Finally, one has $$\sF{\Exps{-\real{\frac{c}{2}}z^2}{\cS_1}{\C_z}}[-1]\iso \RR\qtild_!\kk_A\iso \RR\qtild_!\kk_B\iso \pitens{\widehat{\cH}_-} \kk_{\{ -\phirp{c}\leq t<\phirm{c} \}}\iso \ExpS{\phirp{c}}{\phirm{c}}{\widehat{\cH}_-}{\C_w}$$
since $\qtild$ induces a homeomorphism $B\TO{\approx} \{w\in\widehat{\cH}_-, -\phirp{c}(w)\leq t< -\phirm{c}(w)\}$.
\end{proof}

The computations of the enhanced Fourier--Sato transforms of $\Exps{-\real{\frac{c}{2}}z^2}{\cS_{k,k+1}}{\C_z}$ and $\Exps{-\real{\frac{c}{2}}z^2}{\{0\}}{\C_z}= \Exps{0\phantom{\varphi}}{\{0\}}{\C_z}$ are similar.

\begin{prop}
There are isomorphisms in $\Dbk{\C_w\times\R}$
\begin{align*}
\sF{\Exps{-\real{\frac{c}{2}}z^2}{\cS_{12}}{\C_z}}&\iso \ExpS{0}{\dif{c}}{\widehat{\cH}_-}{\C_w}[1],
&\sF{\Exps{-\real{\frac{c}{2}}z^2}{\cS_{23}}{\C_z}}&\iso \Exps{\philp{c}}{\C_w}{\C_w}[1],\\
\sF{\Exps{-\real{\frac{c}{2}}z^2}{\cS_{34}}{\C_z}}&\iso \ExpS{0}{\dif{c}}{\widehat{\cH}_+}{\C_w}[1],
&\sF{\Exps{-\real{\frac{c}{2}}z^2}{\cS_{41}}{\C_z}}&\iso \Exps{\phirp{c}}{\C_w}{\C_w}[1],
\end{align*}
$$\sF{\Exps{-\real{\frac{c}{2}}z^2}{\{0\}}{\C_z}}\iso \Exps{0\phantom{\varphi}}{\C_w}{\C_w}[1].$$
\end{prop}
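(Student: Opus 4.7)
The proof follows the three-step template of the proof of \cref{FourierOnSectors}. As usual, one starts from
\[
\sF{\Exps{-\real{\frac{c}{2}z^2}}{Z}{\C_z}} \iso \RR\qtild_!\kk_{A_Z}[1], \qquad A_Z \defeq \bigl\{(z,w,t) : z \in Z,\ t \geq \real{\bigl(zw + \tfrac{c}{2}z^2\bigr)}\bigr\}.
\]
The case $Z = \{0\}$ is immediate: $A_{\{0\}} = \{0\}\times\C_w\times\R_{\geq 0}$ and $\qtild$ restricts to a homeomorphism onto $\C_w\times\R_{\geq 0}$, yielding $\Exps{0}{\C_w}{\C_w}[1]$ after the shift.

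For a half-line $Z = \cS_{k,k+1}$, I would parametrize $z = s\mspace{1mu}e^{i\theta_{k,k+1}}$ with $s\in\R_{\geq 0}$. A direct computation shows that on the ``diagonal'' half-lines $\cS_{12},\cS_{34}$ one has $\real{\frac{c}{2}z^2} = -\frac{\rec}{2}s^2$, while on the ``horizontal'' ones $\cS_{23},\cS_{41}$ one has $\real{\frac{c}{2}z^2} = +\frac{\rec}{2}s^2$, and $\real{zw}$ is linear in $s$. The defining inequality therefore becomes a quadratic inequality in $s$, and the stalk of $\mathrm{H}^l\bigl(\RR\qtild_!\kk_{A_Z}[1]\bigr)$ at $(\wf,\tf)$ is the compactly supported cohomology $\Hc{l+1}$ of the corresponding solution set in $[0,\infty)$. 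Being a union of at most two real intervals, this cohomology vanishes unless the set has a compact connected component, in which case it is $\kk$ in degree $-1$. A direct analysis of the quadratic shows that such a compact component exists precisely on the support of the claimed target in each of the four cases, with the explicit bounds given by $\phirp{c},\phirm{c},\philp{c},\philm{c}$ and $\dif{c}$, and the relevant half-plane being $\widehat{\cH}_\pm$.

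To upgrade the stalk identification to an isomorphism of sheaves, I would replay the three-step argument of the proof of \cref{FourierOnSectors}: first remove from $A_Z$ the preimage under $\qtild$ of the complement of the target support (harmless by the stalk vanishing just established, via the projection formula); next, introduce a closed slice $B \subset A_Z$ cut out by $z = s_\bullet(w,t)\mspace{1mu}e^{i\theta_{k,k+1}}$, where $s_\bullet = s_-$ on $\cS_{12},\cS_{34}$ (the right endpoint of the compact component $[0,s_-]$ of the fiber) and $s_\bullet = s_+$ on $\cS_{23},\cS_{41}$ (the right endpoint of the single compact interval fiber $[\max(0,s_-),s_+]$); finally observe that the fibers of $A_Z\setminus B$ over the target support are half-open intervals, possibly together with a half-line in the diagonal cases, all of which have vanishing compactly supported cohomology, so $\RR\qtild_!\kk_{A_Z\setminus B} = 0$. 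Since $\qtild|_B$ is a homeomorphism onto the target's support, the chain $\RR\qtild_!\kk_{A_Z} \iso \RR\qtild_!\kk_B$ combined with the shift $[1]$ gives the stated exponential sheaf.

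The main obstacle is pure sign bookkeeping: tracking how the sign of the leading coefficient of the quadratic dictates whether the target has two $t$-bounds (the form $\ExpS{\phi^+}{\phi^-}{}{}{}$ on $\cS_{12},\cS_{34}$) or only a lower one ($\Exps{\phi}{}{}$ on $\cS_{23},\cS_{41}$), how the sign of the linear coefficient dictates whether $\widehat{\cH}_+$ or $\widehat{\cH}_-$ appears, and matching the explicit roots $s_\pm(w,t)$ of the quadratic with the functions $\phirp{c},\phirm{c},\philp{c},\philm{c},\dif{c}$ defined in \cref{sectionFourierOfExponentialsOnSectors}. Once these identifications are in place, the three-step reduction transcribes without further conceptual input.
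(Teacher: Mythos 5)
Your proposal is correct and follows essentially the same route the paper takes: the paper gives no separate proof for this proposition, remarking only that the computations are ``similar'' to those of \cref{FourierOnSectors}, and your three-step reduction (stalkwise computation of compactly supported cohomology of the fibres, removal of the locus where the stalks vanish via the projection formula, then restriction to a closed section mapped homeomorphically onto the support) is exactly that argument transposed from sectors to half-lines. The sign bookkeeping you describe checks out: on $\cS_{41},\cS_{23}$ the downward-opening quadratic gives a single compact interval fibre (hence a one-sided bound governed by $\phirp{c}$, resp.\ $\philp{c}$), while on $\cS_{12},\cS_{34}$ the upward-opening quadratic produces a compact component precisely when $\imc\rew-\rec\imw\geq 0$ (resp.\ $\leq 0$) and $0\leq t<-\dif{c}(w)$, matching the stated targets.
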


Now that we computed the Fourier--Laplace transform of exponentials, let us briefly reflect on the impact of Fourier--Laplace on morphisms between those exponentials: Exponential enhanced sheaves are sheaves of the form $\kk_Z$ for some locally closed $Z\subseteq \C\times\R$. A morphism between two exponentials is therefore given by multiplication with an element $a\in\kk$ (at points where both stalks are $\kk$, it is multiplication by $a$). Since the enhanced Fourier--Sato transform consists only of tensor products and direct and inverse images along projections, one checks that the induced morphism between the enhanced Fourier--Laplace transforms of the exponentials is again given by multiplication with the same element $a\in\kk$.

\subsection{Enhanced Fourier--Sato transform of a Gaussian enhanced sheaf}\label{sectionFourierOfAlignedGaussian}
In this section, we will elaborate on the idea given at the end of \cref{MainStatementAligned} in order to describe the enhanced Fourier--Sato transform of $\cF_\sigma^{C,\theta_0,\ranks}$. We write for short $\cF\defeq \cF_\sigma^{C,\theta_0,\ranks}$.

To make notation easier, we will write $\Exps{\varphi}{Z}$ instead of $\Exps{\varphi}{Z}{X}$, and we shall assume $r_c=1$ for any $c\in C$. (One can replace any occurence of a direct sum
$\bigoplus_{c\in C} \Exps{\varphi_c}{Z}$ by $\bigoplus_{c\in C} \big(\Exps{\varphi_c}{Z}\big)^{r_c}$ and the word ``triangular'' by ``block-triangular'', and the proof is still valid.)\newline

Recall that we have defined a covering of the plane $\C_z$ by four closed sectors $\cS_k$, $k\in\Z/4\Z$. We set $\cH_+\defeq \cS_1\cup \cS_2$ and $\cH_-\defeq \cS_3\cup \cS_4$ as well as $\cS_{k,k+1}\defeq \cS_k\cap\cS_{k+1}$. On these sectors, we have isomorphisms
$$\alpha_k\colon \cF_{\cS_k}\TO{\sim}\expgzon{\cS_k}$$
and the gluing morphisms $\alpha_{k+1}\circ \alpha_k^{-1}$ on $\cS_{k,k+1}$ are given by the Stokes multipliers $\sigma_k$.

\subsubsection{Transform of restrictions to half-planes}
Let us start by investigating the short exact sequence in $\Modk{\C_z\times\R}$
\begin{equation}\label{eq:sequenceH+}
0\To \cF_{\cH_+}\To \cF_{\cS_1}\oplus\cF_{\cS_2}\To \cF_{\cS_{12}}\To 0.
\end{equation}
Via $\alpha_1$ and $\alpha_2$ (the latter used also for $\cF_{\cS_{12}}$), it is isomorphic to
$$0\To \cF_{\cH_+}\To \expgzon{\cS_1} \oplus \expgzon{\cS_2}\TO{\sigma_1-\1} \expgzon{\cS_{12}}\To 0.$$
Applying the enhanced Fourier--Sato transform and using the results of the previous section, we get a distinguished triangle in $\Dbk{\C_w\times\R}$
\begin{equation*}
\sF{(\cF_{\cH_+}})[-1]\To \bigoplus_{c\in C}\ExpS{\phirp{c}}{\phirm{c}}{\widehat{\cH}_-}\oplus \bigoplus_{c\in C} \ExpS{\philp{c}}{\philm{c}}{\widehat{\cH}_-} \TO{\sigma_1-\1} \bigoplus_{c\in C}\ExpS{0}{\dif{c}}{\widehat{\cH}_-}\ToPO.
\end{equation*}
Since the morphism $\sigma_1-\1$ is an epimorphism in $\Modk{\C_w\times\R}$, the associated long exact sequence yields the following proposition, comprising also the statements for $\cH_-$ and $L\defeq \cS_{41}\cup\cS_{23}$.

\begin{prop}\label{FourierOnHL}
Let $\cF=\cF_\sigma$ be an enhanced sheaf of pure Gaussian type (cf.\ \cref{defGaussianEnhancedSheaf}).
The complexes $\sF{(\cF_{\cH_+})}$, $\sF{(\cF_{\cH_-})}$ and $\sF{(\cF_{L})}$ are concentrated in degree $-1$. More precisely, there are isomorphisms in $\Dbk{\C_w\times\R}$
$$\sF{(\cF_{\cH_+})}[-1] \iso \ker\left( \sigma_1-\1\colon \bigoplus_{c\in C} \ExpS{\phirp{c}}{\phirm{c}}{\widehat{\cH}_-} \oplus \bigoplus_{c\in C} \ExpS{\philp{c}}{\philm{c}}{\widehat{\cH}_-} \to \bigoplus_{c\in C}\ExpS{0}{\dif{c}}{\widehat{\cH}_-} \right),$$

$$\sF{(\cF_{\cH_-})}[-1]\iso \ker\left( \1-\sigma_3\colon \bigoplus_{c\in C} \ExpS{\phirp{c}}{\phirm{c}}{\widehat{\cH}_+} \oplus \bigoplus_{c\in C}\ExpS{\philp{c}}{\philm{c}}{\widehat{\cH}_+} \to \bigoplus_{c\in C}\ExpS{0}{\dif{c}}{\widehat{\cH}_+}\right),$$

$$\sF{(\cF_L)}[-1]\iso \ker\left( \1-\sigma_4\sigma_3\colon \bigoplus_{c\in C} \Exps{\phirp{c}}{\C_w} \oplus \bigoplus_{c\in C} \Exps{\philp{c}}{\C_w} \to \bigoplus_{c\in C}\Exps{0\phantom{\varphi}}{\C_w} \right).$$
\end{prop}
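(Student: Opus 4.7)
The plan is to apply the triangulated functor $\sF{(-)}$ to the three short exact sequences for $\cF_{\cH_+}$, $\cF_{\cH_-}$ and $\cF_L$ displayed at the start of \cref{sectionFourierOfAlignedGaussian}. Each yields a distinguished triangle in $\Dbk{\C_w\times\R}$ whose middle and right-hand terms, after using the trivializations $\alpha_k$ to identify $\cF_{\cS_k}$ and $\cF_{\cS_{k,k+1}}$ with direct sums of exponential enhanced sheaves, have Fourier--Sato transforms given by \cref{FourierOnSectors} and its analogue for the half-lines $\cS_{12},\cS_{34},\cS_{41},\cS_{23}$ and for the point $\{0\}$. In particular all of these transformed terms are concentrated in cohomological degree $-1$, and by the remark at the end of \cref{sectionFourierOfExponentialsOnSectors} the scalar entries of the transition maps are preserved by $\sF{(-)}$; hence the induced morphism on $\cH^{-1}$ is precisely the matrix $\sigma_1-\1$ (respectively $\1-\sigma_3$ and $\1-\sigma_4\sigma_3$) appearing in the statement.

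It then suffices to prove that each of these three maps is an epimorphism in $\Modk{\C_w\times\R}$: once this is established, the long exact sequence in cohomology forces $\sF{(\cF_{\cH_+})}$, $\sF{(\cF_{\cH_-})}$ and $\sF{(\cF_L)}$ to be concentrated in degree $-1$ and equal to the indicated kernels. Since we are working with honest sheaves of vector spaces, surjectivity can be checked stalk by stalk.

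The technical heart, and expected main obstacle, is therefore this stalkwise verification, which I would organize as a case split on the sign of $\rewf$ using the piecewise descriptions of $\phirp{c},\phirm{c},\philp{c},\philm{c}$ and $\dif{c}$. For the $\cH_+$ case at a point $(\wf,\tf)\in\widehat{\cH}_-\times\R$ where the target stalk is nonzero (in particular $\tf\geq 0$): when $\rewf>0$ the subset $I\subseteq C$ of indices $c$ for which the target is nonzero in the $c$-component coincides with the corresponding subset for the first source summand $\ExpS{\phirp{c}}{\phirm{c}}{\widehat{\cH}_-}$, and the principal submatrix of $\sigma_1$ indexed by $I$ is again upper block-triangular with the invertible diagonal blocks of $\sigma_1$, hence invertible; when $\rewf\leq 0$ the same index set agrees with the one for the second source summand $\ExpS{\philp{c}}{\philm{c}}{\widehat{\cH}_-}$, on which the $-\1$ block is a componentwise isomorphism. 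The $\cH_-$ case is symmetric under exchange of $\widehat{\cH}_+$ and $\widehat{\cH}_-$, and the $L$ case is easier: the target $\bigoplus_c\Exps{0}{\C_w}{\C_w}$ is supported in $\{t\geq 0\}$, which is contained in the support of $\bigoplus_c\Exps{\phirp{c}}{\C_w}{\C_w}$, so the identity block of the row matrix $(\1,-\sigma_4\sigma_3)$ already provides stalkwise surjectivity. The standing hypothesis $\arg C\in(-\pi/2,\pi/2)$, i.e.\ $\rec>0$, enters implicitly through the orientation of the hyperbolic regions that dictates the piecewise definitions used in the comparison of supports.
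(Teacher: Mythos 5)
Your argument follows the paper's proof of \cref{FourierOnHL} exactly: apply $\sF{(\bullet)}$ to the gluing sequences for $\cH_+$, $\cH_-$ and $L$, identify the middle and right-hand terms via \cref{FourierOnSectors} and its analogue for half-lines (all concentrated in degree $-1$, with the transition scalars preserved), and conclude from the long exact sequence once the transformed map is an epimorphism. Your stalkwise case split on the sign of $\rewf$ — matching the support of the target with that of the first summand for $\rewf>0$ (where the principal block-submatrix of the block-triangular Stokes matrix is invertible) and with the second summand for $\rewf\leq 0$ — correctly supplies the verification of the epimorphism claim, which the paper merely asserts.
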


\subsubsection{Transform on the whole plane}
We can now examine the sequence
\begin{equation}\label{eq:sequenceGlobal}
0\To \cF\To \cF_{\cH_+}\oplus\cF_{\cH_-}\To \cF_L\To 0,
\end{equation}
which will enable us to describe $\sF{\cF}$ and show that it is of the desired form on sectors.

Let us first make the morphism $\cF_{\cH_+}\oplus\cF_{\cH_-}\to \cF_L$ more explicit: Sequence \eqref{eq:sequenceH+} and similar sequences for $\cH_-$ and $L$ yield commutative diagrams\vspace{2ex}
\begin{equation}\label{eq:morphismH+L}
\begin{tikzcd}
0 \arrow{r}{} & \cF_{\cH_+} \arrow{r}{} \arrow{d}{} & \displaystyle\expgzon{\cS_1} \oplus \expgzon{\cS_2} \arrow{r}{\sigma_1-\1} \arrow{d}{\sigma_2}[swap]{\1} &[+10pt] \displaystyle\expgzon{\cS_{12}} \arrow{r}{} \arrow{d}{\sigma_1^{-1}} & 0\phantom{.}\\
0 \arrow{r}{} & \cF_{L} \arrow{r}{} & \displaystyle\expgzon{\cS_{41}}\oplus \expgzon{\cS_{23}} \arrow{r}{\1-\sigma_4\sigma_3} & \displaystyle\expgzon{\{0\}} \arrow{r}{} & 0
\end{tikzcd}
\end{equation}
and\vspace{2ex}
\begin{equation}\label{eq:morphismH-L}
\begin{tikzcd}
0 \arrow{r}{} & \cF_{\cH_-} \arrow{r}{} \arrow{d}{} & \displaystyle\expgzon{\cS_4} \oplus \expgzon{\cS_3} \arrow{r}{\1-\sigma_3} \arrow{d}{\1}[swap]{\sigma_4} &[+10pt] \displaystyle\expgzon{\cS_{34}} \arrow{r}{} \arrow{d}{\sigma_4} & 0\phantom{.}\\
0 \arrow{r}{} & \cF_{L} \arrow{r}{} & \displaystyle\expgzon{\cS_{41}}\oplus \expgzon{\cS_{23}} \arrow{r}{\1-\sigma_4\sigma_3} & \displaystyle\expgzon{\{0\}} \arrow{r}{} & 0.
\end{tikzcd}
\end{equation}

We would like to show that $\sF{\cF}$ is of pure Gaussian type $\widehat{C}=-1/C$. The considerations from the previous sections suggest using the following sectors:
\begin{align*}
\widehat{\cS}_1&\defeq \left\{ w\in \C_w\,\left| \, \arg w \in \left[-\pi+\arg C,-\frac{\pi}{2}\right]\text{if $w\neq 0$} \right\} \right., \\
\widehat{\cS}_2&\defeq \left\{ w\in \C_w\,\left| \, \arg w \in \left[-\frac{\pi}{2}, \arg C \right]\text{if $w\neq 0$} \right\} \right., \\
\widehat{\cS}_3&\defeq \left\{ w\in \C_w\,\left| \, \arg w \in \left[\arg C, \frac{\pi}{2} \right]\text{if $w\neq 0$} \right\} \right., \\
\widehat{\cS}_4&\defeq \left\{ w\in \C_w\,\left| \, \arg w \in \left[\frac{\pi}{2}, \pi+\arg C \right]\text{if $w\neq 0$} \right\} \right. .
\end{align*}
The Stokes directions for $\widehat{C}$ are $\frac{\pi}{4}+\frac{1}{2}\arg C+k\frac{\pi}{2}$. Hence, $\widehat{\theta}_0=\pi+\frac{1}{2}\arg C$ is indeed generic and the $\widehat{\cS}_k$ are compatible with the sectors $\widehat{S}_k=\{w\in \C\mid \arg w\in [\widehat{\theta}_0+(k-1)\frac{\pi}{2}, \widehat{\theta}_0+k\frac{\pi}{2}]\}$ in the sense of \cref{lemmaCompatibilitySectors}.
(An a posteriori justification for the choice of the generic direction is given by \cref{propStokesMatricesAfterFourier}.)
We have $\widehat{\cH}_+=\widehat{\cS}_3\cup \widehat{\cS}_4$ and $\widehat{\cH}_-=\widehat{\cS}_1\cup \widehat{\cS}_2$, and we set $\widehat{\cS}_{k,k+1}\defeq\widehat{\cS}_k\cap\widehat{\cS}_{k+1}$.

\begin{prop}\label{propFourierTrivializationOnSectors}
The enhanced Fourier--Sato transform $\sF{\cF}$ is concentrated in degree zero and for every $k\in\Z/4\Z$, we have an isomorphism in $\Dbk{\C_w\times\R}$
$$(\sF{\cF})_{\widehat{\cS}_k}\iso \expgwon{\widehat{\cS}_k}.$$
In particular, $\sF{\cF}$ is of pure Gaussian type $\widehat{C}=-1/C$.
\end{prop}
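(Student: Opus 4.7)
The approach is to apply the enhanced Fourier--Sato functor $\sF{(\cdot)}$ to the short exact sequence~\eqref{eq:sequenceGlobal} and combine the resulting distinguished triangle with the explicit descriptions from \cref{FourierOnHL} and \cref{FourierOnSectors}, then analyze the outcome sector by sector.

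First, applying $\sF{(\cdot)}$ to~\eqref{eq:sequenceGlobal} yields a distinguished triangle
\[
\sF{\cF} \To \sF{(\cF_{\cH_+})} \oplus \sF{(\cF_{\cH_-})} \To \sF{(\cF_L)} \ToPO
\]
in $\Dbk{\C_w \times \R}$. By \cref{FourierOnHL} the three outer terms are concentrated in degree $-1$, so the long exact cohomology sequence collapses to a single four-term exact sequence of sheaves, with middle map $\Phi \colon \cH^{-1}(\sF{(\cF_{\cH_+})}) \oplus \cH^{-1}(\sF{(\cF_{\cH_-})}) \to \cH^{-1}(\sF{(\cF_L)})$. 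Thus $\sF{\cF}$ is concentrated in degree $0$ (namely as $\operatorname{coker}\Phi$) if and only if $\Phi$ is injective. Applying $\sF{(\cdot)}$ to the commutative diagrams~\eqref{eq:morphismH+L} and~\eqref{eq:morphismH-L}, and using that $\sF{(\cdot)}$ sends a scalar morphism between exponential sheaves to the same scalar, one realizes $\Phi$ as an explicit morphism between the kernels of \cref{FourierOnHL} with vertical components induced by $\sigma_2, \1, \1, \sigma_4$. Injectivity can be verified stalkwise using the block-triangular structure of the $\sigma_k$ and the relation $\sigma_4 \sigma_3 \sigma_2 \sigma_1 = \1$ from \cref{propTrivialMonodromy}.

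It remains to identify $\sF{\cF} \iso \operatorname{coker}\Phi$ with $\expgwon{\widehat{\cS}_k}$ after restriction to each sector $\widehat{\cS}_k$. Each $\widehat{\cS}_k$ lies entirely in one of the half-planes $\widehat{\cH}_\pm$ and on one side of the line $\{\rew = 0\}$, so on $\widehat{\cS}_k$ the piecewise-defined functions $\phirp{c}, \phirm{c}, \philp{c}, \philm{c}$ each collapse to a single analytic expression: on the relevant sector exactly one of $\phirm{c}, \philm{c}$ equals $\real{\frac{1}{2c}w^2}$ while the other equals $\dif{c}$, and $\phirp{c}, \philp{c}$ become either $\frac{\rew^2}{2\rec}$ or $0$. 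Using \cref{boundedExponent} to absorb bounded exponent differences, and the block-triangular structure of the $\sigma_k$ to quotient out the contributions involving $\dif{c}$ as well as the $\phirp{c}, \philp{c}$ summands, the restricted cokernel collapses to $\expgwon{\widehat{\cS}_k}$, as desired.

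The main obstacle is the combinatorial bookkeeping in this last step: on each of the four sectors one must trace precisely which summands of the presentations from \cref{FourierOnHL} survive in $\operatorname{coker}\Phi$, which are killed by the boundedness absorption of \cref{boundedExponent}, and which are quotiented out via the block-triangular Stokes matrices. The $\pi/2$-symmetry between the sectors $\widehat{\cS}_k$ essentially reduces the verification to one representative case, the others being completely analogous.
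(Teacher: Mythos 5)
Your overall strategy coincides with the paper's: apply $\sF{(\bullet)}$ to \eqref{eq:sequenceGlobal}, feed in \cref{FourierOnHL}, and analyse the resulting triangle sector by sector. Two points are problematic, however. First, \cref{boundedExponent} is a statement about the enhanced \emph{ind}-sheaves $\Expi{\varphi}{Z}{X}=\kE{X}\conv\Exps{\varphi}{Z}{X}$; there is no analogous isomorphism for the enhanced sheaves $\Exps{\varphi}{Z}$ themselves, since $\kk_{\{t+\varphi\geq 0\}}$ and $\kk_{\{t+\psi\geq 0\}}$ are non-isomorphic objects of $\Dbk{X\times\R}$ for $\varphi\neq\psi$. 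The proposition asserts an isomorphism in $\Dbk{\C_w\times\R}$, so invoking \cref{boundedExponent} would at best yield the statement after applying $\kE{\PP_w}\conv(\bullet)$, which is weaker than what is claimed and than what \cref{thmAlignedParametersEnhancedSheaves} requires. Fortunately no such absorption is needed: on each $\widehat{\cS}_k$ the signs of $\rew$ and of $\imc\rew-\rec\imw$ are constant, so each piecewise function collapses to a single formula and the exponents match on the nose; you should simply delete this appeal rather than rely on it.

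Second, and more seriously, the final step (``quotient out the contributions involving $\dif{c}$ \dots via the block-triangular Stokes matrices'') is where the entire content of the proposition lies, and you only assert it. The paper makes it precise as follows: on $\widehat{\cS}_1$ the whole summand $\ker(\1-\sigma_3)_{\widehat{\cS}_1}$ vanishes, because it sits inside interval sheaves supported on $\widehat{\cS}_{41}=\{\imc\rew-\rec\imw=0\}$, where $\phirp{c}=\phirm{c}$ and $\philp{c}=\philm{c}$ force these to be zero; the two surviving kernels are then identified explicitly as $\bigoplus_{c\in C}\ExpS{\frac{\rew^2}{2\rec}}{\real{\frac{1}{2c}w^2}}{\widehat{\cS}_1}$ and $\bigoplus_{c\in C}\Exps{\frac{\rew^2}{2\rec}}{\widehat{\cS}_1}$ via the sequences \eqref{eq:sequenceSigma1} and \eqref{eq:sequenceSigma43}, and the cokernel of the resulting monomorphism is read off from the diagram \eqref{eq:diagramOnS1}. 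Note also that this degeneration of the interval sheaves on the boundary line --- not the block-triangularity of the $\sigma_k$ or the relation $\sigma_4\sigma_3\sigma_2\sigma_1=\1$ that you cite --- is what makes your map $\Phi$ injective: on the line both source summands have zero stalks, and off it only one summand survives and the relevant component of $\Phi$ is an invertible matrix composed with the inclusion of the constant sheaf on an open subset. Without these explicit identifications the argument is an outline rather than a proof.
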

\begin{proof}
We prove the desired isomorphism for $k=1$.

From \eqref{eq:sequenceGlobal}, we get a distinguished triangle
\begin{equation}\label{eq:triangleGlobalExplicit}
\ker(\sigma_1-\1)_{\widehat{\cS}_1}\oplus \ker(\1-\sigma_3)_{\widehat{\cS}_1}\TO{(\1|\sigma_2)-(\sigma_4|\1)}\ker(\1-\sigma_4\sigma_3)_{\widehat{\cS}_1} \To (\sF{\cF})_{\widehat{\cS}_1} \ToPO.
\end{equation}
Here, the kernels are the ones from Proposition \ref{FourierOnHL}. The first morphism is induced by the ones described in \eqref{eq:morphismH+L} and \eqref{eq:morphismH-L}.

Firstly, we note that $\ker(\1-\sigma_3)_{\widehat{\cS}_1}\iso 0$ since it is the kernel of the morphism
$$\bigoplus_{c\in C} \ExpS{\phirp{c}}{\phirm{c}}{\widehat{\cS}_{41}} \oplus \bigoplus_{c\in C} \ExpS{\philp{c}}{\philm{c}}{\widehat{\cS}_{41}} \TO{\1-\sigma_3} \bigoplus_{c\in C}\ExpS{0}{\dif{c}}{\widehat{\cS}_{41}}$$
and on $\widehat{\cS}_{41}$ we have $\imc\rew-\rec\imw=0$, hence $\phirp{c}(w)=\phirm{c}(w)$ and $\philp{c}(w)=\philm{c}(w)$.

Secondly, we determine $\ker(\sigma_1-\1)_{\widehat{\cS}_1}$: It is the first object in the short exact sequence
\begin{align}\label{eq:sequenceSigma1}
0\To\bigoplus_{c\in C} \ExpS{\frac{\rew^2}{2\rec}}{\real{\frac{1}{2c}w^2}}{\widehat{\cS}_1} \TO{(\1,\sigma_1)} \bigoplus_{c\in C} \ExpS{\frac{\rew^2}{2\rec}}{\real{\frac{1}{2c}w^2}}{\widehat{\cS}_1}&\oplus \bigoplus_{c\in C} \ExpS{0}{\dif{c}}{\widehat{\cS}_1} \\ &\TO{\sigma_1-\1} \bigoplus_{c\in C}\ExpS{0}{\dif{c}}{\widehat{\cS}_1}\To 0.\notag
\end{align}

Thirdly, we find $\ker(\1-\sigma_4\sigma_3)_{\widehat{\cS}_1}$ as the first object in the short exact sequence
\begin{equation}\label{eq:sequenceSigma43}
0\To \bigoplus_{c\in C}\Exps{\frac{\rew^2}{2\rec}}{\widehat{\cS}_1} \TO{(\1,\sigma_2\sigma_1)} \bigoplus_{c\in C}\Exps{\frac{\rew^2}{2\rec}}{\widehat{\cS}_1} \oplus \bigoplus_{c\in C}\Exps{0\phantom{\varphi}}{\widehat{\cS}_1} \TO{\1-\sigma_4\sigma_3} \bigoplus_{c\in C}\Exps{0\phantom{\varphi}}{\widehat{\cS}_1} \To 0.
\end{equation}

Finally, there is a commutative diagram in which the sequences \eqref{eq:sequenceSigma1} and \eqref{eq:sequenceSigma43} appear as the columns, and which has exact rows and columns:\vspace{2ex}
\begin{equation}\label{eq:diagramOnS1}
\begin{tikzcd}
&[-30pt]0\arrow{d}{} &[+10pt] 0\arrow{d}{}&[-20pt]&[-10pt]\\
0\arrow{r}{}&\displaystyle\bigoplus_{c\in C}\ExpS{\frac{\rew^2}{2\rec}}{\real{\frac{1}{2c}w^2}}{\widehat{\cS}_1} \arrow{d}{(\1,\sigma_1)}\arrow{r}{\1} &\displaystyle\bigoplus_{c\in C}\Exps{\frac{\rew^2}{2\rec}}{\widehat{\cS}_1} \arrow{d}{(\1,\sigma_2\sigma_1)}\arrow{r}{\1} & \displaystyle\bigoplus_{c\in C}\Exps{\real{\frac{1}{2c}w^2}}{\widehat{\cS}_1}\arrow{r}{}&0\\[+10pt]
&\displaystyle\bigoplus_{c\in C} \ExpS{\frac{\rew^2}{2\rec}}{\real{\frac{1}{2c}w^2}}{\widehat{\cS}_1} \oplus \bigoplus_{c\in C}\ExpS{0}{\dif{c}}{\widehat{\cS}_1}  \arrow{d}{\sigma_1-\1}\arrow{r}{\1|\sigma_2} & \displaystyle\bigoplus_{c\in C}\Exps{\frac{\rew^2}{2\rec}}{\widehat{\cS}_1} \oplus \bigoplus_{c\in C}\Exps{0\phantom{\varphi}}{\widehat{\cS}_1}  \arrow{d}{\1-\sigma_4\sigma_3}
\\[+10pt]
&\displaystyle\bigoplus_{c\in C}\ExpS{0}{\dif{c}}{\widehat{\cS}_1}\arrow{d}{}\arrow{r}{\sigma_1^{-1}} &\displaystyle \bigoplus_{c\in C}\Exps{0\phantom{\varphi}}{\widehat{\cS}_1}\arrow{d}{}
\\
&0 & 0
\end{tikzcd}
\end{equation}
Comparing the upper row of this diagram with the long exact sequence associated to \eqref{eq:triangleGlobalExplicit}, the statement of the proposition follows.
\end{proof}

\subsection{Stokes multipliers of the Fourier--Laplace transform}
We have seen in \cref{propFourierTrivializationOnSectors} that $\sF{\cF}$ is isomorphic to a direct sum of exponential enhanced sheaves on each of the $\widehat{\cS}_k$ (and such isomorphisms have actually been constructed). Therefore, on each of the half-lines $\widehat{\cS}_{k,k+1}$ we have two trivializing isomorphisms $\widehat\alpha_k$ and $\widehat{\alpha}_{k+1}$ coming from the ones on the two adjacent sectors. Our aim is to find matrices $\widehat\sigma_k$ representing an automorphism of $\expgwon{\widehat{\cS}_{k,k+1}}$ such that the following diagram commutes for any $k\in\Z/4\Z$:
\begin{center}
\begin{tikzcd}
\displaystyle\expgwon{\widehat{\cS}_{k,k+1}}\arrow{rr}{\widehat{\sigma}_k}&&\displaystyle\expgwon{\widehat{\cS}_{k,k+1}}\\ \\
&(\sF{\cF})_{\widehat{\cS}_{k,k+1}}\arrow{luu}{\widehat{\alpha}_k}[swap]{\iso}\arrow{ruu}{\iso}[swap]{\widehat{\alpha}_{k+1}}
\end{tikzcd}
\end{center}
Note that $\expgwon{\widehat{\cS}_{k,k+1}}=\bigoplus_{\widehat{c}\in\widehat{C}}\Exps{-\real{\frac{\widehat{c}}{2}w^2}}{\widehat{\cS}_{k,k+1}}$ and the order on $\widehat{C}$ with respect to $\widehat{\theta}_0$ is the one induced by the order on $C$ with respect to $\theta_0$, i.e.\ $c<_{\theta_0} d$ if and only if $\widehat{c}<_{\widehat{\theta}_0}\widehat{d}$.
\begin{prop}\label{propStokesMatricesAfterFourier}
Gluing matrices for $\sF{\cF}$ are given by $\widehat\sigma_k=\sigma_k$, $k\in\Z/4\Z$.
\end{prop}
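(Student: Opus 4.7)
The plan is to trace explicitly the construction of the four trivializations $\widehat\alpha_k\colon (\sF{\cF})_{\widehat{\cS}_k}\iso \expgwon{\widehat{\cS}_k}$ produced by \cref{propFourierTrivializationOnSectors} and then compare them pairwise on each of the common half-lines $\widehat{\cS}_{k,k+1}$.

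First, I would write down analogues of diagram \eqref{eq:diagramOnS1} for each of the four sectors $\widehat{\cS}_k$. Each such diagram determines the trivialization $\widehat\alpha_k$ via its top horizontal row, exactly as in the proof of \cref{propFourierTrivializationOnSectors}. The only non-canonical input entering these diagrams is the collection of Stokes multipliers $\sigma_j$ of the original sheaf $\cF$, which appear through the vertical arrows of \eqref{eq:morphismH+L} and \eqref{eq:morphismH-L}; all other horizontal maps are either identities or the canonical isomorphisms supplied by \cref{boundedExponent}. Consequently, the transitions between the $\widehat\alpha_k$ are, up to canonical identifications, completely determined by the $\sigma_j$.

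Next, restricting both $\widehat\alpha_k$ and $\widehat\alpha_{k+1}$ to the shared half-line $\widehat{\cS}_{k,k+1}$, one exploits the key geometric observation that several of the functions $\phirp{c},\phirm{c},\philp{c},\philm{c},\dif{c}$ either coincide or vanish there. For example, on $\widehat{\cS}_{1,2}$ (the negative imaginary axis, $\rew=0$) one checks that $\phirp{c} = \philp{c} = 0$ and $\phirm{c} = \philm{c} = \real{\frac{1}{2c}w^2}$, while on $\widehat{\cS}_{2,3}$ (the ray $\arg w = \arg C$) one has $\dif{c} = 0$, so that the sheaves $\ExpS{0}{\dif{c}}{\widehat{\cH}_-}$ become zero. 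Each such simplification collapses the middle columns of the restricted diagrams, and the transition $\widehat\alpha_{k+1}\circ\widehat\alpha_k^{-1}$ becomes directly readable from the horizontal maps labelled by the $\sigma_j$.

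Carrying out this comparison on each of the four half-lines, and using the compatibility of the orderings on $C$ and $\widehat{C}$ (which guarantees that the block-triangular shape of $\widehat\sigma_k$ agrees with that of $\sigma_k$), one finds that the transition matrix on $\widehat{\cS}_{k,k+1}$ is precisely $\sigma_k$. The main obstacle is the bookkeeping of four different diagrams on four different half-lines, together with the tracking of upper- versus lower-block-triangular conventions and of which half-planes $\widehat{\cH}_\pm$ the relevant exponentials live on; but once this is organised, commutativity of all the diagrams forces $\widehat\sigma_k = \sigma_k$ for every $k\in\Z/4\Z$.
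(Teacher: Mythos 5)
Your proposal follows essentially the same route as the paper: it derives the trivializations $\widehat\alpha_k$ from the cokernel presentations constructed in \cref{propFourierTrivializationOnSectors}, restricts to the half-lines $\widehat{\cS}_{k,k+1}$ where the exponents $\phirp{c},\phirm{c},\philp{c},\philm{c},\dif{c}$ collapse (your checks of these collapses are correct), and reads off the transition from the maps labelled by the $\sigma_j$. The one step you leave implicit---and which is the heart of the paper's computation---is that the two presentations of the middle term $\ker(\1-\sigma_4\sigma_3)$ on the overlap $\widehat{\cS}_{12}$ differ by the isomorphism $\sigma_2\sigma_1$, which then combines with the horizontal maps $\1$ and $\sigma_2^{-1}$ of the two restricted diagrams to yield $\widehat{\sigma}_1=\sigma_2^{-1}(\sigma_2\sigma_1)=\sigma_1$.
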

\begin{proof}
Let us give the proof for $\widehat{\sigma}_1=\sigma_1$.

By what we learnt in \cref{propFourierTrivializationOnSectors}, the triangle \eqref{eq:triangleGlobalExplicit} is actually a short exact sequence (identifying $\sF{\cF}$ with $\mathrm{H}^0(\sF{\cF})$)
\begin{equation*}
0\To \ker(\sigma_1-\1)\oplus\ker(\1-\sigma_3)\TO{(\1|\sigma_2)-(\sigma_4|\1)}\ker(\1-\sigma_4\sigma_3)\To \sF{\cF}\To 0.
\end{equation*}
On $\widehat{\cS}_1$ (i.e.\ applying $(\bullet)_{\widehat{\cS}_1}$), it induces
\begin{equation*}
0\To \ker(\sigma_1-\1)_{\widehat{\cS}_1}\TO{\1|\sigma_2}\ker(\1-\sigma_4\sigma_3)_{\widehat{\cS}_1}\To (\sF{\cF})_{\widehat{\cS}_1}\To 0
\end{equation*}
since we proved $\ker(\1-\sigma_3)_{\widehat{\cS}_1}\iso 0$. We obtained determinations of $\ker(\sigma_1-\1)_{\widehat{\cS}_1}$ and $\ker(\1-\sigma_4\sigma_3)_{\widehat{\cS}_1}$ and hence the isomorphism $\widehat{\alpha}_1$ as the third vertical arrow in the diagram\vspace{2ex}
\begin{equation}\label{eq:oddDiagramS1}
\begin{tikzcd}
0\arrow{r}& \ker(\sigma_1-\1)_{\widehat{\cS}_1} \arrow{r}{\1|\sigma_2}\arrow{d}{\iso}& \ker(\1-\sigma_4\sigma_3)_{\widehat{\cS}_1} \arrow{r}{} \arrow[Red]{d}{\iso} & (\sF{\cF})_{\widehat{\cS}_1} \arrow[blue]{d}{\iso} \arrow{r} & 0\\
0\arrow{r} &  \displaystyle\bigoplus_{c\in C} \ExpS{\frac{\rew^2}{2\rec}}{\real{\frac{1}{2c}w^2}}{\widehat{\cS}_1} \arrow{r}{\1} & \displaystyle\bigoplus_{c\in C}\Exps{\frac{\rew^2}{2\rec}}{\widehat{\cS}_1} \arrow{r}{\1} & \displaystyle\bigoplus_{c\in C}\Exps{\real{\frac{1}{2c}w^2}}{\widehat{\cS}_1}\arrow{r} & 0
\end{tikzcd}
\end{equation}
Similarly, $\widehat{\alpha}_2$ is obtained from the diagram\vspace{2ex}
\begin{equation}\label{eq:oddDiagramS2}
\begin{tikzcd}
0\arrow{r}& \ker(\sigma_1-\1)_{\widehat{\cS}_2} \arrow{r}{\1|\sigma_2}\arrow[dashed]{d}{\iso}& \ker(\1-\sigma_4\sigma_3)_{\widehat{\cS}_2} \arrow{r}{} \arrow[Red,dashed]{d}{\iso} & (\sF{\cF})_{\widehat{\cS}_2} \arrow[dashed,blue]{d}{\iso} \arrow{r} & 0\\
0\arrow{r} &  \displaystyle\bigoplus_{c\in C} \ExpS{\frac{\rew^2}{2\rec}}{\real{\frac{1}{2c}w^2}}{\widehat{\cS}_2}  \arrow{r}{\sigma_2} & \displaystyle\bigoplus_{c\in C}\Exps{\frac{\rew^2}{2\rec}}{\widehat{\cS}_2} \arrow{r}{\sigma_2^{-1}} & \displaystyle\bigoplus_{c\in C}\Exps{\real{\frac{1}{2c}w^2}}{\widehat{\cS}_2}\arrow{r} & 0
\end{tikzcd}
\end{equation}
Now we can take the right square of diagrams \eqref{eq:oddDiagramS1} and \eqref{eq:oddDiagramS2}, apply the functor $(\bullet)_{\widehat{\cS}_{12}}$ and identify their first lines, and we obtain
\begin{equation}\label{eq:diagramSigmaHat}
\begin{tikzcd}
\displaystyle\bigoplus_{c\in C}\Exps{\frac{\rew^2}{2\rec}}{\widehat{\cS}_{12}}\arrow{r}{\1} \arrow[bend left,orange]{dd} & \displaystyle\bigoplus_{c\in C}\Exps{\real{\frac{1}{2c}w^2}}{\widehat{\cS}_{12}} \arrow[bend left,Purple]{dd}{\widehat{\sigma}_1}\\
\ker(\1-\sigma_4\sigma_3)_{\widehat{\cS}_{12}} \arrow{r}{} \arrow[Red,dashed,swap]{d}{\iso} \arrow[Red]{u}{\iso}& (\sF{\cF})_{\widehat{\cS}_{12}} \arrow[blue,dashed,swap]{d}{\iso} \arrow[blue]{u}{\iso}\\
\displaystyle\bigoplus_{c\in C}\Exps{\frac{\rew^2}{2\rec}}{\widehat{\cS}_{12}} \arrow{r}{\sigma_2^{-1}} & \displaystyle\bigoplus_{c\in C}\Exps{\real{\frac{1}{2c}w^2}}{\widehat{\cS}_{12}}
\end{tikzcd}
\end{equation}
and the purple arrow is the one in question. Therefore, it remains to determine the orange one.

The object $\ker(\1-\sigma_4\sigma_3)_{\widehat{\cS}_1}$ was determined by the short exact sequence
\begin{equation*}
0\To \bigoplus_{c\in C}\Exps{\frac{\rew^2}{2\rec}}{\widehat{\cS}_1} \TO{(\1,\sigma_2\sigma_1)} \bigoplus_{c\in C}\Exps{\frac{\rew^2}{2\rec}}{\widehat{\cS}_1} \oplus \bigoplus_{c\in C}\Exps{0\phantom{\varphi}}{\widehat{\cS}_1}\TO{\1-\sigma_4\sigma_3} \bigoplus_{c\in C}\Exps{0\phantom{\varphi}}{\widehat{\cS}_1}\To 0
\end{equation*}
and the object $\ker(\1-\sigma_4\sigma_3)_{\widehat{\cS}_2}$ by the sequence
$$0\To \bigoplus_{c\in C}\Exps{\frac{\rew^2}{2\rec}}{\widehat{\cS}_2} \TO{(\sigma_1^{-1}\sigma_2^{-1},\1)}  \bigoplus_{c\in C}\Exps{0\phantom{\varphi}}{\widehat{\cS}_2}\oplus \bigoplus_{c\in C}\Exps{\frac{\rew^2}{2\rec}}{\widehat{\cS}_2} \TO{\1-\sigma_4\sigma_3} \displaystyle\bigoplus_{c\in C}\Exps{0\phantom{\varphi}}{\widehat{\cS}_2}\To 0.$$
Applying the functor $(\bullet)_{\widehat{\cS}_{12}}$, the second and third objects of both sequences are identified (since $\rew=0$ on $\widehat{\cS}_{12}$) and the induced isomorphism between the first objects (which is the orange arrow from \eqref{eq:diagramSigmaHat}) is clearly given by $\sigma_2\sigma_1$. Therefore, it follows from \eqref{eq:diagramSigmaHat} that $\widehat{\sigma}_1=\sigma_1$.
\end{proof}

This concludes the proof of \cref{thmAlignedParametersEnhancedSheaves}.

\section{A more general case}
In this section, we show how the methods of the previous section can be adapted to a case with weaker assumptions on the parameter set $C$. In contrast to \cite{Sab16}, this yields an explicit solution to the problem of finding a transformation rule for Stokes data in more general cases than in \cref{sectionAligned}. Although Corollary 4.19 in loc.\ cit.\ provided a theoretical answer by stating that arbitrary parameter configurations can be deformed into those studied in the previous section, this answer was not at all explicit.

We restrict to the case where $C=\{c,d\}$ consists of two parameters and the ranks of the regular parts are $r_c=r_d=1$ (and we suppress $\ranks$ in our notation).

\begin{cond}
We say that an ordered pair $(c,d)$ of nonzero complex numbers $c,d\in\C^\times$ satisfies condition \eqref{eq:ConditionsOnC} if the following is satisfied:
\begin{equation}\label{eq:ConditionsOnC}
\rec>0,\quad \imc\geq 0,\quad \red>\rec,\quad \text{and}\quad \frac{\imd}{\red}\geq \frac{\imc}{\rec}.  \tag{\katare}
\end{equation}
where we write $c=\rec+i\imc$ and $d=\red+i\imd$ with their real and imaginary parts.
\end{cond}
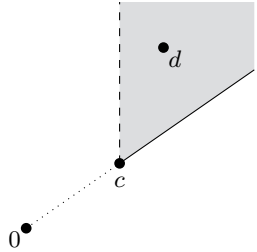
\begin{figure}[ht]
\centering
\begin{tikzpicture}
\fill[Gray!30!white] ({1.5*cos(35)},{1.5*sin(35)}) -- (3,{3*tan(35)}) -- (3,3) -- ({1.5*cos(35)},3) -- cycle;
\draw[dotted] (0,0) -- ({1.5*cos(35)},{1.5*sin(35)});
\draw ({1.5*cos(35)},{1.5*sin(35)}) -- (3,{3*tan(35)});
\draw[dashed] ({1.5*cos(35)},{1.5*sin(35)}) -- ({1.5*cos(35)},3);
\fill ({1.5*cos(35)},{1.5*sin(35)}) circle (2pt);
\fill (0,0) circle (2pt);
\fill ({3*cos(53)},{3*sin(53)}) circle (2pt);
\node at (-0.15,-0.15) {\small $0$};
\node at ({1.5*cos(35)},{1.5*sin(35)-0.25}) {\small $c$};
\node at ({3*cos(53)+0.15},{3*sin(53)-0.15}) {\small $d$};
\end{tikzpicture}
\caption{Let $\arg c\in[0,\frac{\pi}{2})$. The pair $(c,d)$ satisfies condition \eqref{eq:ConditionsOnC} if and only if $d$ lies in the cone with vertex $c$ and bounded by the directions $\arg c$ (included) and $\frac{\pi}{2}$ (excluded).}
\end{figure}

Let $C=\{c,d\}\subset \C^\times$ such that \eqref{eq:ConditionsOnC} is satisfied. Set $\theta_0\defeq -\frac{1}{2}\arg c$. It is a generic direction since the Stokes directions are $-\frac{\pi}{4}-\frac{\arg(d-c)}{2}+k\frac{\pi}{2}$. Let $\sigma=(\sigma_k)_{k\in\Z/4\Z}$ be a family of four $2\times 2$-matrices such that $\sigma_k$ is upper-triangular (resp.\ lower-triangular) if $k$ is odd (resp.\ even) and $\sigma_4\sigma_3\sigma_2\sigma_1=\1$.
\begin{thm}\label{thmNotAligned}
Let $C$, $\theta_0$ and $\sigma$ be as above. If we set $\widehat{C}\defeq \{-\frac{1}{c},-\frac{1}{d}\}$ and $\widehat{\theta}_0\defeq \pi-\theta_0$, there is an isomorphism in $\DD^\mathrm{b}(\kk_{\C\times\R})$
$$\sF{\cF^{C,\theta_0}_\sigma}\iso \cF^{\widehat{C},\widehat{\theta}_0}_\sigma.$$
\end{thm}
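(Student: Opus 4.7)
The proof mirrors the architecture of \cref{thmAlignedParametersEnhancedSheaves} from Section~\ref{sectionAligned}. First, I choose four closed sectors $\cS_1,\dots,\cS_4$ covering $\C_z$, each containing exactly one Stokes direction of the pair $(c,d)$ and compatible with $S_k$ in the sense of \cref{lemmaCompatibilitySectors}. The four Stokes directions $-\frac{\pi}{4}-\frac{\arg(d-c)}{2}+k\frac{\pi}{2}$ are separated by right angles, so such a choice of sectors exists. On each $\cS_k$, the sheaf $\cF\defeq\cF^{C,\theta_0}_\sigma$ trivializes as $\Exps{-\real{\frac{c}{2}z^2}}{\cS_k}\oplus\Exps{-\real{\frac{d}{2}z^2}}{\cS_k}$ with transition maps given by $\sigma_k$ on the $\cS_{k,k+1}$, and the Mayer--Vietoris short exact sequences \eqref{eq:sequenceH+}--\eqref{eq:sequenceGlobal} reduce the determination of $\sF\cF$ to Fourier-transforming these exponential building blocks.

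Next, for each $e\in\{c,d\}$ and each index $k$, I compute $\sF{\Exps{-\real{\frac{e}{2}z^2}}{\cS_k}}$ (and the analogous objects on $\cS_{k,k+1}$ and $\{0\}$) stalkwise, following the method of \cref{FourierOnSectors}: the stalk at $(\wf,\tf)$ is the compactly supported cohomology of the intersection of the hyperbolic region $\{\tf-\real{(z\wf+\frac{e}{2}z^2)}\geq 0\}$ with the relevant subset of $\cS_k$. Since the hyperbola associated to parameter $e$ has symmetry axes in directions $\pm\frac{\arg e}{2}$, the half-planes and supporting functions of Section~\ref{sectionFourierOfExponentialsOnSectors} now come in two versions, one for $e=c$ and one for $e=d$. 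The resulting transforms still take the form of exponential enhanced sheaves, parallel to \cref{FourierOnSectors}.

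Feeding these back into the Mayer--Vietoris sequences, the analogues of \cref{FourierOnHL} and \cref{propFourierTrivializationOnSectors} are obtained: since the relevant morphisms remain stalkwise epimorphisms, $\sF\cF$ is concentrated in degree zero. Choosing the dual sectors $\widehat\cS_k\subset\C_w$ compatible with $\widehat\theta_0$ and with the Stokes directions of $\widehat{C}=\{-\tfrac{1}{c},-\tfrac{1}{d}\}$, the analogue of diagram~\eqref{eq:diagramOnS1} yields the trivialization $(\sF\cF)_{\widehat\cS_k}\iso\Exps{\real{\frac{1}{2c}w^2}}{\widehat\cS_k}\oplus\Exps{\real{\frac{1}{2d}w^2}}{\widehat\cS_k}$. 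Finally, since the enhanced Fourier--Sato functor sends a scalar morphism between exponentials to the scalar morphism with the same scalar (as observed at the end of Section~\ref{sectionFourierOfExponentialsOnSectors}), the diagram chase of \cref{propStokesMatricesAfterFourier} applies verbatim to identify $\widehat\sigma_k=\sigma_k$.

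The main obstacle is the geometric bookkeeping in the middle step: when $\arg c\neq\arg d$, the two families of hyperbolic regions have distinct asymptotic directions, and the sectors $\cS_k\subset\C_z$ together with their duals $\widehat\cS_k\subset\C_w$ must be arranged so that the qualitative case distinctions of \cref{figureHyperbolaAndSectorAligned} hold simultaneously for both $e=c$ and $e=d$, and so that each $\widehat\cS_k$ lies inside a controlled intersection of the four half-planes $\widehat\cH^\pm_c,\widehat\cH^\pm_d$. Condition~\eqref{eq:ConditionsOnC} encodes precisely the relative position of $c$ and $d$ (and hence of $-\tfrac{1}{c}$ and $-\tfrac{1}{d}$) that makes this simultaneous compatibility possible; once it is verified, the remainder of the argument is essentially formal.
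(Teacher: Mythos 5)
Your proposal follows essentially the same route as the paper's proof: $c$-adapted sectors, stalkwise computation of the transforms of the exponential building blocks, the Mayer--Vietoris sequences, and the final diagram chase for the transition matrices. One caveat: the supports of $\sF{\Exps{-\real{\frac{d}{2}z^2}}{\cS_k}{\C_z}}$ are not half-planes but the regions $Y_k$ (intersections or unions of the $c$- and $d$-half-planes), so the analogue of diagram \eqref{eq:diagramOnS1} does \emph{not} apply verbatim --- in particular $\ker(\1-\sigma_3)_{\widehat{\cS}_1}$ no longer vanishes and one summand of $\ker(\sigma_1-\1)$ splits --- though you correctly identify this geometric bookkeeping, controlled by condition \eqref{eq:ConditionsOnC}, as the main obstacle.
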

\begin{cor}
Let $\cM$ be of pure Gaussian type $C=\{c,d\}$ such that \eqref{eq:ConditionsOnC} holds. Then $\dF{\cM}$ is of pure Gaussian type $\widehat{C}=\{-\frac{1}{c},-\frac{1}{d}\}$. Moreover, if $\sigma=(\sigma_k)_{k\in \Z/4\Z}$ is a family of Stokes multipliers for $\cM$ with respect to the generic direction $\theta_0=-\frac{1}{2}\arg c$, then $\sigma$ is also a family of Stokes multipliers for $\dF{\cM}$ with respect to the generic direction $\widehat{\theta}_0=\pi-\theta_0$.
\end{cor}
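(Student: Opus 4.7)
The plan is to follow the four-step strategy of Section~\ref{sectionAligned} verbatim, adapting the geometry of sectors and half-planes to the new configuration. Let us abbreviate $\cF \defeq \cF_\sigma^{C,\theta_0}$. First, I would fix closed sectors $\cS_1,\dots,\cS_4$ in $\C_z$ whose boundary half-lines are the four Stokes directions $-\frac{\pi}{4}-\frac{1}{2}\arg(d-c)+k\frac{\pi}{2}$, and similarly closed sectors $\widehat\cS_1,\dots,\widehat\cS_4$ in $\C_w$ associated with $\widehat C$. Condition \eqref{eq:ConditionsOnC} guarantees $0\leq\arg c\leq\arg d<\frac{\pi}{2}$, hence $\theta_0=-\frac{1}{2}\arg c$ and $\widehat\theta_0=\pi-\theta_0$ are both generic and the $\cS_k$, $\widehat\cS_k$ are compatible with the sectors $S_k$, $\widehat S_k$ in the sense of \cref{lemmaCompatibilitySectors}. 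On each $\cS_k$ we then have a trivialisation $\alpha_k\colon \cF_{\cS_k}\xrightarrow{\sim}\Exps{-\real{\frac{c}{2}z^2}}{\cS_k}\oplus\Exps{-\real{\frac{d}{2}z^2}}{\cS_k}$ with gluing morphisms given by the $\sigma_k$.

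The next step is to compute $\sF{\Exps{-\real{\frac{e}{2}z^2}}{\cS_k}}$ and $\sF{\Exps{-\real{\frac{e}{2}z^2}}{\cS_{k,k+1}}}$ individually for $e\in\{c,d\}$, exactly as in \cref{FourierOnSectors}. The stalk-level computation with the hyperbola $\{\tf-\real{(z\wf+\frac{e}{2}z^2)}\geq 0\}$ intersected with the sector goes through verbatim; what changes is that the continuous functions $\phirp{e},\phirm{e},\philp{e},\philm{e},\dif{e}$ and the half-planes $\widehat{\cH}_\pm^{e}\defeq\{w\in\C_w\mid \pm(\imag{e}\cdot\rew-\real{e}\cdot\imw)\geq 0\}$ now genuinely depend on $e$. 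Crucially, \eqref{eq:ConditionsOnC} implies $\widehat{\cH}_+^{c}\supseteq\widehat{\cH}_+^{d}$ and $\widehat{\cH}_-^{c}\subseteq\widehat{\cH}_-^{d}$, and that the sector $\widehat\cS_k$ lies on the correct side of each boundary line $\partial\widehat{\cH}_\pm^e$. These inclusions are the geometric translation of the block-triangular structure of $\sigma_k$ and are what will make the gluing arguments below go through.

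Having the building blocks, I would run through the four short exact sequences of Section~\ref{sectionFourierOfAlignedGaussian} exchanging $\bigoplus_{c\in C}$ for the sum over $\{c,d\}$:
\begin{align*}
 0\To \cF_{\cH_+}\To \cF_{\cS_1}\oplus\cF_{\cS_2}\To \cF_{\cS_{12}}\To 0, \\
 0\To \cF_{\cH_-}\To \cF_{\cS_3}\oplus\cF_{\cS_4}\To \cF_{\cS_{34}}\To 0, \\
 0\To \cF_{L}\To \cF_{\cS_{41}}\oplus\cF_{\cS_{23}}\To \cF_{\{0\}}\To 0, \\
 0\To \cF\To \cF_{\cH_+}\oplus\cF_{\cH_-}\To \cF_{L}\To 0.
\end{align*}
Applying $\sF{(\bullet)}$ yields distinguished triangles; exactness of the corresponding morphisms of exponential enhanced sheaves (which is a statement about the set-theoretic difference of the supports) follows from \cref{lemmaHomExp} together with the inclusions of the previous paragraph. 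Hence $\sF{(\cF_{\cH_\pm})}$ and $\sF{(\cF_L)}$ are each concentrated in one degree and described by explicit kernels, as in \cref{FourierOnHL}.

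Finally, restricting the global triangle to each $\widehat\cS_k$ I would build the analogue of diagram \eqref{eq:diagramOnS1}: the upper row, after using that on $\widehat\cS_k\cap\widehat{\cH}_\pm^e$ certain exponents collapse (because $\phirp{e}=\phirm{e}$ and $\philp{e}=\philm{e}$ on the respective Stokes boundaries), exhibits $(\sF{\cF})_{\widehat\cS_k}$ as $\bigoplus_{\widehat{e}\in\widehat C}\Exps{-\real{\frac{\widehat{e}}{2}w^2}}{\widehat\cS_k}$. This gives the required trivialisations $\widehat\alpha_k$ and proves $\sF\cF$ is of pure Gaussian type $\widehat C$. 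To read off the Stokes multipliers, one repeats the diagram chase of \cref{propStokesMatricesAfterFourier}: the transition isomorphism $\widehat\alpha_{k+1}\circ\widehat\alpha_k^{-1}$ on $\widehat\cS_{k,k+1}$ is computed from the two presentations of $\ker(\1-\sigma_4\sigma_3)$ (and its cyclic variants), and the matrix expressing the change of presentation is literally $\sigma_k$, whence $\widehat\sigma_k=\sigma_k$.

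The main obstacle I foresee is verifying the geometric inclusions between the half-planes $\widehat\cH_\pm^c$, $\widehat\cH_\pm^d$ and the target sectors $\widehat\cS_k$; these are precisely what condition \eqref{eq:ConditionsOnC} was designed to enforce, and they are what allows the block-triangular structure of $\sigma_k$ (imposed by \cref{autosAsMatrices} on $S_k$) to survive under the transform and reappear as the correct block-triangular structure on $\widehat S_k$. Once these inclusions are in place, the rest of the argument is the same diagrammatic linear algebra as in \cref{thmAlignedParametersEnhancedSheaves}. The corollary then follows from \cref{lemmaCompatibilityFourierSol} as in \cref{corAlignedParametersDmodules}.
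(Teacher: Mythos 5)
Your overall strategy coincides with the paper's: deduce the corollary from the enhanced-sheaf statement $\sF{\cF^{C,\theta_0}_\sigma}\iso\cF^{\widehat{C},\widehat{\theta}_0}_\sigma$ of \cref{thmNotAligned} via \cref{thmBigSectors}, \cref{lemmaCompatibilityFourierSol} and the equivalence with Stokes data, and prove that statement by transforming the four Mayer--Vietoris sequences and chasing diagrams as in \cref{sectionAligned}. However, two of your concrete inputs are wrong, and the second is exactly the point where the non-aligned case genuinely differs from the aligned one. First, you take sectors whose boundary rays are the Stokes directions. A closed sector bounded by two Stokes lines contains two Stokes directions, so \cref{propWideSectors} does not provide a trivialization on it, \cref{autosAsMatrices} does not describe the transition automorphisms on its boundary half-lines (that proposition explicitly excludes half-lines in Stokes directions, where the order $<_S$ degenerates and the transition matrices need not be triangular), and \cref{lemmaCompatibilitySectors} does not identify the resulting data with $\cF^{C,\theta_0}_\sigma$. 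The paper instead uses $\cS_1=\{\arg z\in[0,\frac{\pi}{2}-\arg c]\}$, etc., whose boundary directions are generic and which the coordinate change \eqref{eq:coordinateTransform} for the parameter $c$ turns into right-angled sectors.

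Second, and fatally for your diagram chase: you claim that $\sF{\Exps{-\real{\frac{d}{2}z^2}}{\cS_k}{\C_z}}$ is supported on a half-plane depending on $d$, and that \eqref{eq:ConditionsOnC} forces inclusions between the $c$- and $d$-half-planes. Two closed half-planes bounded by lines through the origin are nested only if they coincide, i.e.\ only if $\arg c=\arg d$ --- the aligned case; so the ``crucial inclusions'' you rely on cannot hold when $\arg c\neq\arg d$. The actual picture is that, since the $\cS_k$ are adapted to $c$ while the exponent is $-\frac{d}{2}z^2$, the supports are the sets $Y_k$, e.g.\ $Y_1=\big\{w\in\C_w\,\big|\,\imw\leq\min\big(\frac{\imc}{\rec}\rew,\frac{\imd}{\red}\rew\big)\big\}$, which are intersections or unions of two \emph{distinct} half-planes whose boundary rays cut through the interiors of the target sectors $\widehat{\cS}_k$. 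As a consequence, $\ker(\1-\sigma_3)_{\widehat{\cS}_1}$ no longer vanishes and the $d$-summand of $\ker(\sigma_1-\1)_{\widehat{\cS}_1}$ splits according to $Y_1\cap\widehat{\cS}_1$ and $Y_4\cap\widehat{\cS}_1$, so the gluing diagram must be enlarged as in the displayed diagram of the paper's proof of \cref{thmNotAligned}. Since your argument rests on the impossible half-plane inclusions, it does not go through as written; only the final reduction of the corollary to the enhanced-sheaf statement is sound.
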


\begin{proof}[Proof of \cref{thmNotAligned}]
First, we choose a sector decomposition analogously to \cref{sectionFourierOfExponentialsOnSectors}, replacing $\arg C$ by $\arg c$, i.e.\
$$\cS_1\defeq \left\{ z\in \C_z\,\left| \, \arg z \in \left[0,\frac{\pi}{2}-\arg c\right]\text{if $z\neq 0$} \right\} \right.\quad \text{etc.}$$

Next, we compute the enhanced Fourier--Sato transforms of the exponentials involved: For the parameter $c$, this is exactly the same computation that we performed above, i.e.\
$$\sF{\Exps{-\real{\frac{c}{2}}z^2}{\cS_1}{\C_z}}\iso \ExpS{\phirp}{\phirm}{\widehat{\cH}_-}{\C_w}[1]\quad \text{etc.}$$
(see \cref{FourierOnSectors}, we write $\phirp$ instead of $\phirp{c}$ etc. here).
For the exponentials $\Exps{-\real{\frac{d}{2}z^2}}{\cS_k}{\C_z}$, one proceeds similarly. However, the coordinate transform for the parameter $d$ (similar to \eqref{eq:coordinateTransform}) does not transform $\cS_k$ into right-angled sectors. Hence, the geometry of the intersection spaces is more involved, yet it is still not too difficult to determine the compactly supported cohomologies, and we find that
\begin{align*}
\sF{\Exps{-\real{\frac{d}{2}z^2}}{\cS_1}{\C_z}}&\iso \ExpS{\psirp}{\psirm}{Y_1}{\C_w}[1],
&\sF{\Exps{-\real{\frac{d}{2}z^2}}{\cS_2}{\C_z}}&\iso \ExpS{\psilp}{\psilm}{Y_2}{\C_w}[1],\\
\sF{\Exps{-\real{\frac{d}{2}z^2}}{\cS_3}{\C_z}}&\iso \ExpS{\psilp}{\psilm}{Y_3}{\C_w}[1],
&\sF{\Exps{-\real{\frac{d}{2}z^2}}{\cS_4}{\C_z}}&\iso \ExpS{\psirp}{\psirm}{Y_4}{\C_w}[1];\\
\sF{\Exps{-\real{\frac{d}{2}z^2}}{\cS_{12}}{\C_z}}&\iso \ExpS{0}{\zeta}{\widehat{\cH}_-}{\C_w}[1],
&\sF{\Exps{-\real{\frac{d}{2}z^2}}{\cS_{23}}{\C_z}}&\iso \Exps{\psilp}{\C_w}{\C_w}[1],\\
\sF{\Exps{-\real{\frac{d}{2}z^2}}{\cS_{34}}{\C_z}}&\iso \ExpS{0}{\zeta}{\widehat{\cH}_+}{\C_w}[1],
&\sF{\Exps{-\real{\frac{d}{2}z^2}}{\cS_{41}}{\C_z}}&\iso \Exps{\psirp}{\C_w}{\C_w}[1].
\end{align*}
Here, the functions $\psirp,\psirm\colon \C_w\to\R$ are defined by
\begin{align*}
\psirp(w)\defeq \begin{cases}\frac{\rew^2}{2\red}& \text{if $\rew\leq 0$}\\ 0 & \text{if $\rew>0$} \end{cases}
\end{align*}
and
\begin{align*}
\psirm(w)\defeq \begin{cases}\real{\frac{1}{2d}w^2}& \text{if $(\rec \imd - \imc \red)\imw\leq-(\rec\red+\imc\imd)\rew$}\\\zeta(w) & \text{if $(\rec \imd - \imc \red)\imw>-(\rec\red+\imc\imd)\rew$} \end{cases},
\end{align*}
where $\zeta(w)\defeq -\frac{(\imc \rew-\rec\imw)^2}{2(\rec^2\red+2\rec\imc\imd-\imc^2\red)}$ and $\psilp,\psilm\colon \C_w\to \R$ are similar (with cases interchanged). Moreover,
\begin{align*}
Y_1&\defeq \left\{ w\in  \C_w \,\left|\, \imw\leq \min\!\Big(\frac{\imc}{\rec}\rew,\frac{\imd}{\red}\rew\Big) \right. \right\},\\
Y_2&\defeq \left\{ w\in  \C_w \,\left|\, \imw\leq \max\!\Big(\frac{\imc}{\rec}\rew,\frac{\imd}{\red}\rew\Big) \right. \right\},\\
Y_3&\defeq \left\{ w\in  \C_w \,\left|\, \imw\geq \max\!\Big(\frac{\imc}{\rec}\rew,\frac{\imd}{\red}\rew\Big) \right. \right\},\\
Y_4&\defeq \left\{ w\in  \C_w \,\left|\, \imw\geq \min\!\Big(\frac{\imc}{\rec}\rew,\frac{\imd}{\red}\rew\Big) \right. \right\}.
\end{align*}
\begin{figure}[ht]
\centering
\begin{picture}(360,90)
\put(0,10){
\begin{tikzpicture}[scale=0.7]
\fill[SeaGreen!70!white] (0,0) -- (2,{2/3}) -- (2,-2) -- (-2,-2) -- ({-4/3},-2) -- cycle;
\draw[thick] (0,0) -- (2,{2/3});
\draw[thick] (0,0) -- ({-4/3},-2);
\draw (2,2) -- (-2,2) -- (-2,-2) -- (2,-2) -- cycle;
\draw[dashed,MidnightBlue,thick] (0,-2) -- (0,2);
\draw[dashed,MidnightBlue,thick] (-2,{-2/3}) -- (2,{2/3});
\node at (-1.5,1.5) {\textcolor{MidnightBlue}{$\widehat{\cS}_4$}};
\node at (1.5,-1.5) {\textcolor{MidnightBlue}{$\widehat{\cS}_2$}};
\node at (1.5,1.5) {\textcolor{MidnightBlue}{$\widehat{\cS}_3$}};
\node at (-1.5,-1.5) {\textcolor{MidnightBlue}{$\widehat{\cS}_1$}};
\end{tikzpicture}
}
\put(35,0){
$Y_1$
}
\put(90,10){
\begin{tikzpicture}[scale=0.7]
\fill[SeaGreen!70!white] (0,0) -- (-2,{-2/3}) -- (-2,-2) -- (2,-2) -- (2,2) -- ({4/3},2) -- cycle;
\draw[thick] (0,0) -- (-2,{-2/3});
\draw[thick] (0,0) -- ({4/3},2);
\draw (2,2) -- (-2,2) -- (-2,-2) -- (2,-2) -- cycle;
\draw[dashed,MidnightBlue,thick] (0,-2) -- (0,2);
\draw[dashed,MidnightBlue,thick] (-2,{-2/3}) -- (2,{2/3});
\end{tikzpicture}
}
\put(125,0){
$Y_2$
}
\put(180,10){
\begin{tikzpicture}[scale=0.7]
\fill[SeaGreen!70!white] (0,0) -- (-2,{-2/3}) -- (-2,2) -- ({4/3},2) -- cycle;
\draw[thick] (0,0) -- (-2,{-2/3});
\draw[thick] (0,0) -- ({4/3},2);
\draw (2,2) -- (-2,2) -- (-2,-2) -- (2,-2) -- cycle;
\draw[dashed,MidnightBlue,thick] (0,-2) -- (0,2);
\draw[dashed,MidnightBlue,thick] (-2,{-2/3}) -- (2,{2/3});
\end{tikzpicture}
}
\put(215,0){
$Y_3$
}
\put(270,10){
\begin{tikzpicture}[scale=0.7]
\fill[SeaGreen!70!white] (0,0) -- (2,{2/3}) -- (2,2) -- (-2,2) -- (-2,-2) -- ({-4/3},-2) -- cycle;
\draw[thick] (0,0) -- (2,{2/3});
\draw[thick] (0,0) -- ({-4/3},-2);
\draw (2,2) -- (-2,2) -- (-2,-2) -- (2,-2) -- cycle;
\draw[dashed,MidnightBlue,thick] (0,-2) -- (0,2);
\draw[dashed,MidnightBlue,thick] (-2,{-2/3}) -- (2,{2/3});
\end{tikzpicture}
}
\put(305,0){
$Y_4$
}
\end{picture}
\caption{The sets $Y_k$ and their relative positions with respect to the sectors $\widehat{\cS}_k$.}
\label{figureY}
\end{figure}

One can now determine $\sF{(\cF_{\cH_+})}$, $\sF{(\cF_{\cH_-})}$ and $\sF{(\cF_{L})}$ by enhanced Fourier--Sato transform of short exact sequences (cf.\ \cref{FourierOnHL}). One then proves isomorphisms
$$(\sF{\cF})_{\widehat{\cS}_k}\iso \Exps{\real{\frac{1}{2c}w^2}}{\widehat{\cS}_k}{\C_w}\oplus \Exps{\real{\frac{1}{2d}w^2}}{\widehat{\cS}_k}{\C_w},$$
where the sectors $\widehat{\cS}_k$ are defined as in \cref{sectionFourierOfAlignedGaussian} (with $\arg C$ replaced by $\arg c$), i.e.\
$$\widehat{\cS}_1\defeq \left\{ w\in \C_w\,\left| \, \arg w \in \left[-\pi+\arg c,-\frac{\pi}{2}\right]\text{if $w\neq 0$} \right\} \right. \quad \text{etc.}$$
The main difference is the fact that the supports of the Fourier--Sato transforms of the $\Exps{-\real{\frac{d}{2}z^2}}{\cS_k}{\C_z}$ (i.e.\ the sets $Y_k$) are not unions of these sectors (see \cref{figureY}). Therefore, if we want to mimick the proof of \cref{propFourierTrivializationOnSectors} (for $k=1$), we will not have $\ker(\1-\sigma_3)\iso 0$, but the second summand of $\ker(\sigma_1-\1)$ ``splits'' into two parts. The diagram correponding to \eqref{eq:diagramOnS1} in this case then looks as follows (we write direct sums vertically):
\begin{center}\small
\begin{tikzcd}
&[-30pt] 0\arrow{d}&[+30pt] 0\arrow{d}&[-20pt] &[-10pt]\\
0\arrow{r} & \vsum{\ExpS{\frac{\rew^2}{2\rec}}{\real{\frac{1}{2c}w^2}}{\widehat{\cS}_1}}{\ExpS{\frac{\rew^2}{2\red}}{\real{\frac{1}{2d}w^2}}{Y_1\cap \widehat{\cS}_1}} \oplus \vsum{\phantom{\Exps{\frac{\rew^2}{2\rec}}{\widehat{\cS}_1}}0\phantom{\Exps{\frac{\rew^2}{2\rec}}{\widehat{\cS}_1}}}{\ExpS{\frac{\rew^2}{2\red}}{\real{\frac{1}{2d}w^2}}{Y_4\cap\widehat{\cS}_1}}\arrow{d}{-\sigma_4^{-1}}[swap]{(\1,\sigma_1)} \arrow{r}{\1+\1} & \vsum{\Exps{\frac{\rew^2}{2\rec}}{\widehat{\cS}_1}}{\Exps{\frac{\rew^2}{2\red}}{\widehat{\cS}_1}}\arrow{d}{(\1,\sigma_3^{-1}\sigma_4^{-1})} \arrow{r}{\1} & \vsum{\Exps{\real{\frac{1}{2c}w^2}}{\widehat{\cS}_1}}{\Exps{\real{\frac{1}{2d}w^2}}{\widehat{\cS}_1}}\arrow{r} & 0\\
&\left(\vsum{\ExpS{\frac{\rew^2}{2\rec}}{\real{\frac{1}{2c}w^2}}{\widehat{\cS}_1}}{\ExpS{\frac{\rew^2}{2\red}}{\real{\frac{1}{2d}w^2}}{Y_1\cap \widehat{\cS}_1}}\oplus \vsum{\ExpS{0}{\eta}{\widehat{\cS}_1}}{\ExpS{0}{\zeta}{\widehat{\cS}_1}}\right)\oplus \vsum{0}{\ExpS{\frac{\rew^2}{2\red}}{\real{\frac{1}{2d}w^2}}{Y_4\cap\widehat{\cS}_1}}\arrow{d}{0}[swap]{\sigma_1-\1} \arrow{r}{(\1|\sigma_2)-(\sigma_4,0)} & \vsum{\Exps{\frac{\rew^2}{2\rec}}{\widehat{\cS}_1}}{\Exps{\frac{\rew^2}{2\red}}{\widehat{\cS}_1}}\oplus \vsum{\Exps{0\phantom{\varphi}}{\widehat{\cS}_1}}{\Exps{0\phantom{\varphi}}{\widehat{\cS}_1}}\arrow{d}{\1-\sigma_4\sigma_3}\\ &\vsum{\ExpS{0}{\eta}{\widehat{\cS}_1}}{\ExpS{0}{\zeta}{\widehat{\cS}_1}}\oplus \vsum{0}{0}\arrow{d} \arrow{r}{\sigma_1^{-1}+0} & \vsum{\Exps{0\phantom{\varphi}}{\widehat{\cS}_1}}{\Exps{0\phantom{\varphi}}{\widehat{\cS}_1}}\arrow{d}\\
& 0 & 0
\end{tikzcd}\normalsize
\end{center}
Although the left part of the diagram becomes more complicated, the cokernel of the morphism in the first line is as desired.

The computation of transition matrices for $\sF{\cF}$ then works analogously to that in the aligned case.
\end{proof}

A generalization to more than two parameters (and ranks not equal to $1$) is easily possible:
One then needs to require that the elements of $C=\{\pc{1},\ldots, \pc{n}\}$ satisfy condition \eqref{eq:ConditionsOnC} ``pairwise'', i.e.\ $(\pc{k},\pc{k+1})$ satisfies condition \eqref{eq:ConditionsOnC} for any $k\in\{1,\ldots, n-1\}$.

This result shows that the considerations of \cref{sectionAligned} can -- with a little effort, but without serious difficulties -- be adapted to more general situations. Our assumptions were chosen in such a way that we were able to reuse some results from the aligned case. However, under different assumptions on the parameters, one can proceed similarly, as long as one can choose suitable sectors in the domain and target of the Fourier--Laplace transform keeping the topological situation reasonable.

\begin{ack}
	I would like to thank Marco Hien, Andrea D'Agnolo and Claude Sabbah for inspiring discussions on this subject, as well as for some useful comments during the preparation of this work.
\end{ack}

\end{document}